\definecolor{gr}{rgb}   {0.,   0.69,   0.23 }
\definecolor{bl}{rgb}   {0.,   0.5,   1. }
\definecolor{mg}{rgb}   {0.85,  0.,    0.85}
\definecolor{yl}{rgb}   {0.8,  0.7,   0.}
\newcommand{\U}{\mathcal{U}}
\newtheorem{theorem}{Theorem} [section]
\newtheorem{lemma}[theorem]{Lemma}
\newtheorem{proposition}[theorem]{Proposition}
\newtheorem{remark}[theorem]{Remark}
\newtheorem{corollary}[theorem]{Corollary}
\newtheorem*{ackno}{Acknowledgments}
\DeclareMathOperator*{\intt}{\int}
\newcommand{\noi}{\noindent}
\newcommand{\Z}{\mathbb{Z}}
\newcommand{\R}{\mathbb{R}}
\newcommand{\T}{\mathbb{T}}
\let\Re=\undefined\DeclareMathOperator*{\Re}{Re}
\let\Im=\undefined\DeclareMathOperator*{\Im}{Im}
\let\P= \undefined
\newcommand{\P}{\mathbf{P}}
\newcommand{\Ha}{\mathbb{H}_a}
\newcommand{\Dr}{\Theta}
\newcommand{\dr}{\theta}
\newcommand{\E}{\mathbb{E}}
\DeclareMathOperator{\Law}{Law}
\newcommand{\NN}{\mathbb{N}}
\newcommand{\N}{\mathcal{N}}
\newcommand{\NB}{\mathbb{N}}
\newcommand{\F}{\mathcal{F}}
\newcommand{\al}{\alpha}
\newcommand{\dl}{\delta}
\newcommand{\nb}{\nabla}
\newcommand{\PP}{\mathbb{P}}
\newcommand{\eps}{\varepsilon}
\newcommand{\g}{\gamma}
\newcommand{\ld}{\lambda}
\newcommand{\s}{\sigma}
\newcommand{\ft}{\widehat}
\newcommand{\wt}{\widetilde}
\newcommand{\cj}{\overline}
\newcommand{\dx}{\partial_x}
\newcommand{\dt}{\partial_t}
\newcommand{\embeds}{\hookrightarrow}
\newcommand{\ta}{\theta}
\newcommand{\vp}{\varphi}
\renewcommand{\l}{\ell}
\renewcommand{\o}{\omega}
\newcommand{\les}{\lesssim}
\newcommand{\ges}{\gtrsim}
\newcommand{\wto}{\rightharpoonup}
\newcommand{\jb}[1]
{\langle #1 \rangle}
\newcommand{\W}{\mathcal{W}}
\numberwithin{equation}{section}
\numberwithin{theorem}{section}
\newcommand{\too}{\longrightarrow}
\begin{document}
\baselineskip = 12.7pt

\title[Quasi-invariance for FNLS]
{Transport of Gaussian measures under the flow of one-dimensional fractional nonlinear Schr\"{o}dinger equations
}

\author[J.~Forlano and K.~Seong]
{Justin Forlano and Kihoon Seong}

\address{
Justin Forlano, Maxwell Institute for Mathematical Sciences\\
 Department of Mathematics\\
 Heriot-Watt University\\
 Edinburgh\\ 
 EH14 4AS\\
  United Kingdom 
and Department of Mathematics\\
 University of California\\
  Los Angeles\\
   CA 90095\\
    USA}

\email{forlano@math.ucla.edu}

\address{
	Kihoon Seong\\
	Department of Mathematical Sciences\\
	Korea Advanced Institute of Science
	and Technology\\ 
	291 Daehak-ro, Yuseong-gu, Daejeon 34141, Republic of Korea
}

\email{hun1022kr@kaist.ac.kr}

\subjclass[2010]{35Q55, 60H30}

\keywords{fractional nonlinear Schr\"{o}dinger equation; quasi-invariance; Gaussian measure}

\begin{abstract}
We study the transport
property of Gaussian measures on Sobolev spaces of periodic functions under the dynamics of the one-dimensional cubic fractional nonlinear Schr\"odinger equation. For the case of second-order dispersion or greater, 
we establish an optimal regularity result for the quasi-invariance of these Gaussian measures, following the approach by Debussche and Tsutsumi~\cite{DT2020}.
Moreover, we obtain an explicit formula for the Radon-Nikodym derivative and, as a corollary,
a formula for the two-point function arising in wave turbulence theory. We also obtain improved regularity results in the weakly dispersive case, extending those by the first author and Trenberth~\cite{FT}.
Our proof combines the approach introduced by Planchon, Tzvetkov and Visciglia~\cite{PTV} and that of Debussche and Tsutsumi~\cite{DT2020}.
\end{abstract}



%
\maketitle
%

\tableofcontents

\section{Introduction}

\subsection{Main results}
In this paper, we study the transport properties of Gaussian measures
on periodic functions under the dynamics of the cubic fractional nonlinear Schr\"odinger equation (FNLS)
on the one-dimensional torus $\T=\R/(2\pi\Z)$:
\begin{equation}\label{FNLS}
\begin{cases}
i\partial_t u+(-\dx^2)^\alpha u=\pm |u|^2u,\\
u|_{t=0}=\phi,
\end{cases}
\end{equation}
where $u:\R \times \T \longmapsto \mathbb{C}$ is the unknown function, and for $\al>0$, we denote by $(-\dx^{2})^{\al}$ the Fourier multiplier operator defined by $ ((-\dx^{2})^{\al}f )\,\,\widehat{}\,\,(n):=|n|^{2\al}\ft f(n)$, $n\in \Z$. Here, $\al$ measures the strength of the dispersion in FNLS~\eqref{FNLS}.
When $\al=1$, FNLS~\eqref{FNLS} is the cubic nonlinear Schr\"{o}dinger equation (NLS)
which arises in the study of nonlinear optics, fluids and plasma physics; for further discussion, see~\cite{Sulem}. 
For $\al=2$, \eqref{FNLS} corresponds to the cubic fourth order NLS (4NLS) and has applications in the study of solitons in magnetic materials~\cite{4NLS1, 4NLS2}. In the weakly dispersive case $\tfrac{1}{2}< \al<1$, FNLS~\eqref{FNLS} was introduced in the context of the fractional quantum mechanics~\cite{Laskin} and in continuum limits of long-range lattice interactions~\cite{FRAC1}. 
The equation FNLS~\eqref{FNLS} is no longer dispersive when $\al \leq \tfrac{1}{2}$ although the cubic nonlinear half-wave equation ($\al=\tfrac 12$) has many physical applications ranging from wave turbulence~\cite{FRAC3, FRAC2} to gravitational collapse~\cite{FRAC4, FRAC5}. 
See also \cite{Thirouin} regarding FNLS~\eqref{FNLS} when $\frac{1}{3}<\al <\frac{1}{2}$.
In this paper, we focus on studying FNLS~\eqref{FNLS} for $\al>\tfrac 12$ where dispersion is present.

Our goal in this paper is to study the statistical properties of solutions to FNLS~\eqref{FNLS}.
In particular, we study the quasi-invariance of Gaussian measures $\mu_s$ under the flow of \eqref{FNLS} where, for $s\in \R$, the measures $\mu_s$ are formally written as
\begin{align}
d\mu_s=Z_s^{-1}e^{-\frac{1}{2}\| \phi \|_{H^s}^2}\,d\phi=\prod_{n\in \mathbb{Z}}Z_{s,n}^{-1}e^{-\frac{1}{2}\jb{ n }^{2s} | \ft \phi_n |^2} \,d\ft \phi_n,
\label{gauss1}
\end{align}
where $Z_{s}$ and $Z_{s,n}$ are normalization constants.
These measures \eqref{gauss1} are
the induced probability measure under the random Fourier series:\footnote{In the following, we often drop the harmless factor of $2\pi$.}
\begin{align}
\omega \in \Omega \longmapsto \phi^{\omega}(x)=\phi(x;\omega)=\sum_{n\in \Z}
\frac{g_n(\omega)}{\jb{n}^s}e^{inx},
\label{series1}
\end{align}

\noi
where $\jb{\,\cdot\,} =(1+| \,\cdot \,|^2)^{\frac{1}{2}}$ and $\{g_n \}_{n\in \mathbb{Z}}$ is a sequence of independent standard complex-valued Gaussian random variables\footnote{By convention, we set  Var$(g_n)=1$, $n \in \Z$.} on a probability space $(\Omega,\mathcal{F}, \PP)$. 
From the random Fourier series representation, 
the random distribution $\phi^\o$ in \eqref{series1} lies in $H^{\s}(\T)$ 
almost surely
if and only if 
\begin{align}
\s < s -\frac 12.
\label{reg}
\end{align}
Consequently, $\mu_s$ is supported in $H^{\s}(\T)\setminus H^{s-\frac{1}{2}}(\T)$ where $\s$ satisfies \eqref{reg}. 
In particular, the triplet $(H^s, H^\s, \mu_s)$ forms an abstract Wiener space; see~\cite{GROSS, Kuo2}.
 In \cite{FT}, with Trenberth, the first author studied the transport property of Gaussian measures $\mu_s$ under the flow of FNLS~\eqref{FNLS} for some range of $s>\frac{1}{2}$, depending on $\al>\frac{1}{2}$. Our main goal is to improve the regularity restrictions in \cite{FT}.

 In order to discuss the transport of Gaussian measures $\mu_s$ under the flow of \eqref{FNLS}, we need to understand when there is a well-defined flow for \eqref{FNLS} in the support of $\mu_s$.
The well-posedness theory of \eqref{FNLS} is distinguished by the strength of the dispersion $\al$: (i) strong dispersion $\al\geq 1$ and (ii) weak dispersion $\frac{1}{2}<\al<1$. 
In the former case (i),
 for any $\al\ge 1$, the cubic FNLS~\eqref{FNLS} is globally well-posed in $H^{\s}(\T)$ for $\s \geq 0$~\cite{Bou, OTz, FT}. 
This global well-posedness result is sharp for the cubic NLS~\eqref{FNLS} ($\al=1$) and 4NLS ($\al=2$) as they are ill-posed in negative Sobolev spaces in the sense of non-existence of solutions~\cite{GO}; see also~\cite{BGT, Molinet, ChofPoc, OW1, OTz,  Oh, OW, Kishimoto}.    
In the latter case (ii), the cubic FNLS~\eqref{FNLS} is locally well-posed in $H^{\s}(\T)$ for $\s\geq \frac{1-\al}{2}$~\cite{Cho, ST1} and globally well-posed when $\s>\frac{10\al+1}{12}$~\cite{demirbas2013existence}. This latter result is not expected to be sharp.
For future use, we define $\Phi_t(\cdot): \phi \in H^{\s}(\T) \mapsto u(t,\phi)\in H^{\s}(\T)$ to be the flow map of FNLS~\eqref{FNLS} at time $t$.


In view of this well-posedness theory,  we consider Gaussian measures $\mu_s$ for
\begin{align}
s>\max\bigg( \frac{1}{2}, 1-\frac{\al}{2}\bigg). \label{optimals}
\end{align}
Our first main result is an optimal regularity result for the quasi-invariance of Gaussian measures under the flow of FNLS~\eqref{FNLS} in the strongly dispersive case $\al\geq 1$. 
This extends the result in \cite{FT} to the optimal Sobolev regularity.

\begin{theorem}
\label{THM:1}
Let $\al \ge 1$ and $s>\frac 12$. Then, the Gaussian measure $\mu_s$ in \eqref{gauss1} is quasi-invariant under the dynamics of the cubic FNLS \eqref{FNLS}. More precisely, 
for every $t\in \R$, the measures $(\Phi_t)_*\mu_s$ and $\mu_s$
are mutually absolutely continuous. 
\end{theorem}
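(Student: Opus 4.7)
The plan is to adapt the Debussche--Tsutsumi strategy \cite{DT2020}, combined with the weighted-measure scheme of Planchon--Tzvetkov--Visciglia \cite{PTV}. As a preliminary step, I would apply the gauge transformation $v(t) := e^{\mp 2it M(u(t))} u(t)$ with $M(\phi) := \frac{1}{2\pi}\|\phi\|_{L^2}^2$; since $M$ is conserved by the flow and well-defined $\mu_s$-almost surely (the assumption $s>\tfrac12$ ensures the random series \eqref{series1} lies in $L^2$), this is compatible with the question of quasi-invariance. In Fourier, the gauge cancels the diagonal contributions $\{n_1,n_3\} = \{n_2,n\}$ to the cubic convolution, leaving only genuinely non-resonant terms. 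Next, truncate the gauged equation to
\begin{equation*}
i \dt u_N + (-\dx^2)^\al u_N = \pm P_N\bigl(|P_N u_N|^2 P_N u_N\bigr),
\end{equation*}
where $P_N$ projects onto $|n|\le N$. Its flow $\Phi_t^N$ decouples into a finite-dimensional Hamiltonian system on the low modes (preserving the Lebesgue measure there by Liouville's theorem) and free linear propagation on the high modes (preserving the Gaussian on those modes).

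The heart of the argument is constructing a modified energy. Differentiating $\|P_N u_N(t)\|_{H^s}^2$ in time and expanding in Fourier yields a quadrilinear sum over $n_1 - n_2 + n_3 = n$ weighted by the resonance function $\Phi := |n_1|^{2\al} - |n_2|^{2\al} + |n_3|^{2\al} - |n|^{2\al}$. A normal form reduction, i.e.\ an integration by parts in time exploiting $e^{it\Phi} = -i\Phi^{-1}\dt e^{it\Phi}$ on the non-resonant set, trades this quadrilinear form for a boundary term $R_{s,N}(u)$ and a sextilinear remainder. Setting $E_{s,N}(u) := \|P_N u\|_{H^s}^2 - 2R_{s,N}(u)$, I would aim for uniform-in-$N$ estimates of the form
\begin{equation*}
|R_{s,N}(\phi)| \le \tfrac14\|\phi\|_{H^s}^2 + C\bigl(1 + \|\phi\|_{H^\s}^A\bigr), \qquad
\Big|\tfrac{d}{dt} E_{s,N}(u_N(t))\Big| \le C\bigl(1 + \|u_N(t)\|_{H^s}^2\bigr)\bigl(1 + \|u_N(t)\|_{H^\s}^A\bigr),
\end{equation*}
for some $A>0$ and some $\s < s-\tfrac12$. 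These bounds hinge on the dispersive lower bounds for $|\Phi|$ in the non-resonant regime, where the factor $|\Phi|^{-1}$ effectively supplies derivatives; this gain is strong enough to close the estimates at $s>\tfrac12$ precisely when $\al \ge 1$. Given such estimates, I would apply a Gr\"onwall/Yudovich argument to $t \mapsto \int F(E_{s,N}(\Phi_t^N \phi)) \, d\mu_s(\phi)$: combining the invariance of Lebesgue on low modes with the invariance of the Gaussian on high modes, the time derivative reduces to an integrable expression, yielding uniform-in-$N$ control. Passing to the limit $N \to \infty$ via the $L^2$ global well-posedness of FNLS for $\al \ge 1$ together with $\Phi_t^N \to \Phi_t$ delivers $(\Phi_t)_* \mu_s \ll \mu_s$, and time-reversibility gives the converse.

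The hard step is establishing the energy estimate uniformly in $N$ at the endpoint $s > \tfrac12$. The correction $R_{s,N}$ must remain bounded by a fraction of $\|\cdot\|_{H^s}^2$ plus a lower-order quantity integrable against $\mu_s$, and the sextilinear remainder must close at the same regularity, even though the random series \eqref{series1} lies in $H^\s$ only for $\s < s-\tfrac12$. This balance is delicate: it requires combining sharp multilinear estimates under the constraint $n_1 - n_2 + n_3 = n$, number-theoretic input such as the divisor bound, and the dispersive smoothing supplied by the symbol $|n|^{2\al}$ with $\al \ge 1$, in order to reach the optimal regularity threshold $s > \tfrac12$.
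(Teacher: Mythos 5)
Your overall architecture (truncation, Liouville's theorem on the low modes, normal form reduction, a probabilistic integrability step, and a limit $N\to\infty$) matches the skeleton of the paper's argument, and your boundary/correction term $R_{s,N}$ is the analogue of the paper's $\N_0$. But there is a genuine gap at the crux of the argument: you propose to close the sextilinear remainder via a \emph{fixed-time} estimate $|\tfrac{d}{dt}E_{s,N}(u_N(t))|\leq C(1+\|u_N(t)\|_{H^s}^2)(1+\|u_N(t)\|_{H^\s}^A)$ and then run a Gr\"onwall/Yudovich argument on the weighted measure. You assert that the dispersive lower bound on $|\Phi_\al|$ "is strong enough to close the estimates at $s>\tfrac12$ precisely when $\al\geq 1$," but this is unsubstantiated and is exactly where the scheme breaks down. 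After the normal form, the weight is $|\Psi_s(\cj n)/\Phi_\al(\cj n)|\les n_{\max}^{2s-2\al}$, which for $\al=1$ and $s$ near $\tfrac12$ is only $n_{\max}^{-1+}$, while four of the six factors sit in $H^\s$ with $\s=0+$; a pointwise-in-time counting argument over the four-dimensional constraint set cannot absorb this. This is precisely the obstruction that limited the modified-energy/hybrid approach of \cite{FT} to $s>\max(\tfrac23,\tfrac{11}{6}-\al)$, and removing it is the point of the present paper.

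The paper's proof of Theorem~\ref{THM:1} avoids the modified energy and the Yudovich argument altogether: it writes the Radon--Nikodym derivative of $(\Phi_{N,t})_*\mu_{s,N,r}$ explicitly as $f_N(t,\cdot)=\exp\big(\mp\int_0^t\Re\jb{i|u_N|^2u_N,D^{2s}u_N}dt'\big)$ and estimates this \emph{time-integrated} quantity directly (Lemma~\ref{LEM:energy}). The integration by parts in time is performed inside the time integral, so the sextilinear remainder is a genuine space-time object that can be placed in $X^{s,b}$ spaces and controlled by the $L^4_{t,x}$-Strichartz estimate of Lemma~\ref{LEM:SuperL4} (which itself requires the new counting bound \eqref{Count} for fractional $\al$), together with a symmetrization producing $\Psi_s(\cj n)$ and the double mean value theorem (Lemmas~\ref{DMVT}--\ref{LEM:mult}); this exploitation of temporal oscillation is what reaches $s>\tfrac12$ for all $\al\geq1$, and even the unmodified Debussche--Tsutsumi analysis only reaches it for $\al\geq\tfrac32$. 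The probabilistic step then reduces to the uniform exponential integrability $\E_{\mu_s}[\chi_{\{\|\phi_N\|_{L^2}\leq r\}}e^{C\|\phi_N\|_{H^\s}^3}]<\infty$, proved via the Bou\'e--Dupuis formula, followed by the limits $N\to\infty$ and $r\to\infty$. To repair your proposal you would need to replace the fixed-time derivative estimate by a bound on $\int_0^T\tfrac{d}{dt}E_{s,N}\,dt$ in $X^{s,b}_T$-type norms; as written, the deterministic step fails for $1\leq\al<\tfrac32$, including the NLS case $\al=1$.
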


Our second main result is an improvement on the regularity restrictions in \cite{FT} for the weakly dispersive case $\frac{1}{2}<\al<1$.

\begin{theorem}
\label{THM:2}
Let $\frac 12 < \al <1$ and $s>\frac{3-2\al}{2}$. Then, for every $R>0$, there exists $T>0$ such that
for every measurable set
\begin{align*}
A \subset \{ u\in H^{s-\frac 12-\eps}(\T)\, : \, \|u\|_{H^{s-\frac{1}{2}-\eps}(\T)}<R \}
\end{align*} 
satisfying $\mu_{s}(A)=0$, where $\eps>0$ is sufficiently small, we have $(\Phi_{t})_{\ast}\mu_{s}(A)=0$ for every $t\in [-T,T]$.
\end{theorem}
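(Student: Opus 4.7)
My proposal is to follow the combined Planchon--Tzvetkov--Visciglia (PTV) and Debussche--Tsutsumi (DT) scheme indicated in the abstract, adapted to the weakly dispersive regime. Start with the frequency-truncated FNLS
\begin{equation*}
i\dt u_N + (-\dx^2)^\al u_N = \pi_N\big( |\pi_N u_N|^2 \pi_N u_N\big),
\end{equation*}
whose flow $\Phi_t^N$ is globally defined and respects the splitting $\mu_s = \mu_{s,\leq N}\otimes \mu_{s,>N}$, with $\mu_{s,\leq N}$ preserved under the truncated dynamics on the low modes. Pass to the interaction representation $v_N = e^{it(-\dx^2)^\al} u_N$ and perform a DT-style normal form (Poincar\'e--Dulac) transformation, replacing the naive energy $\|u_N\|_{H^s}^2$ by a renormalized energy $E_{s,N}(u_N) = \|u_N\|_{H^s}^2 + \Re\, \mathcal{N}_N(u_N)$, where $\mathcal{N}_N$ is a cubic Fourier-side correction designed to cancel, in $\tfrac{d}{dt}\|u_N\|_{H^s}^2$, the derivative-losing non-resonant contributions using the oscillation of the modulation phase $|n_1|^{2\al} - |n_2|^{2\al} + |n_3|^{2\al} - |n|^{2\al}$.

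The heart of the argument is then a PTV-type $L^p(d\mu_s)$ estimate, localized to the ball $B_R = \{u\in H^{s-\frac{1}{2}-\eps}(\T)\,:\,\|u\|_{H^{s-\frac{1}{2}-\eps}} < R\}$, of the form
\begin{equation*}
\int_{B_R} \Big|\tfrac{d}{dt} E_{s,N}\big(\Phi_t^N u\big)\Big|^p\, d\mu_s(u) \les C(R,p),
\end{equation*}
uniform in $N$ and valid for $|t|\leq T(R)$. Each multilinear remainder term in $\tfrac{d}{dt} E_{s,N}$ is treated by Gaussian integration by parts (the Wick/Cameron--Martin trick at the core of PTV) together with multilinear/Strichartz estimates tuned to the fractional dispersion $|n|^{2\al}$. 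Inserting this bound into a Yudovich--Gr\"onwall argument yields a uniform-in-$N$ $L^p(\mu_s)$ bound on the Radon--Nikodym densities
\begin{equation*}
f_{N,t}^R(u) = \mathbf{1}_{B_R}(u)\,\frac{d\big((\Phi_t^N)_{\ast}(\mathbf{1}_{B_R}\mu_s)\big)}{d\mu_s}(u),
\end{equation*}
for $|t|\leq T(R)$. A limiting argument based on local well-posedness of \eqref{FNLS} in $H^{s-\frac{1}{2}-\eps}(\T)$, which holds since $s-\tfrac{1}{2}-\eps > \tfrac{1-\al}{2}$ for small $\eps$ by the cited Cho and Strauss--Tsutsumi results, then upgrades this to $(\Phi_t)_\ast\mu_s(A) = 0$ whenever $A\subset B_R$ satisfies $\mu_s(A)=0$.

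The main obstacle will be closing the $L^p$ estimate at the sharp threshold $s > \frac{3-2\al}{2}$. In the weakly dispersive range $\tfrac{1}{2}<\al<1$ the modulation phase is considerably weaker than in the NLS case, so the DT normal-form integration-by-parts gains only $\al$ derivatives rather than the $1$ available when $\al\geq 1$; this is what pushes the required Sobolev regularity above $\tfrac{1}{2}$ and forces the scaling $s - \tfrac{3-2\al}{2}$ on the derivative budget of the remaining worst resonant-like term. The role of the PTV weight is to supply the missing $\tfrac{1}{2}$ derivative by trading it for a Gaussian moment of $\|u\|_{H^{s-\frac{1}{2}-\eps}}$, which is finite only on bounded sublevel sets. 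This trade-off is precisely what produces both the threshold $(3-2\al)/2$ and the local-in-time, bounded-ball nature of the conclusion: null sets are transported to null sets only on $B_R$ and only for $|t|\leq T(R)$.
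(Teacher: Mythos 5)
Your skeleton (truncate, write the density of the pushed-forward measure explicitly, prove a uniform bound, then pass to the limit using local well-posedness in $H^{\s}$ and inner regularity) matches the paper, but the central analytic step is different from what the paper does, and as written it has gaps. First, a factual error: $\mu_{s,\leq N}$ is \emph{not} preserved by the truncated dynamics on the low modes; only the Lebesgue measure $d\phi_N$ is (Liouville's theorem). The Gaussian marginal is merely quasi-invariant, with the explicit density $f_N(t,\phi_N)=\exp\big(\mp\int_0^t \Re\,\jb{i(|u_N|^2u_N)(-t'),D^{2s}u_N(-t')}_{L^2}\,dt'\big)$, and the whole argument is about bounding this exponent. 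Second, and more importantly, the paper's hybrid PTV--DT argument does \emph{not} prove a probabilistic $L^p(d\mu_s)$ bound on $\tfrac{d}{dt}E_{s,N}$ via Gaussian integration by parts followed by a Yudovich--Gr\"onwall iteration. Instead, the point of localizing to $B_R\subset H^{\s}(\T)$ is that the deterministic local well-posedness bounds (Lemma~\ref{LEM:approx}) give $\|u_N\|_{X^{\s,\frac12+}_{T(R)}}\leq C(R)$ uniformly in $N$, and the weakened space-time energy estimate \eqref{stenergy2} (Lemma~\ref{LEM:energy2}) bounds the exponent by $C(1+\|u_N\|_{X^{\s,b}_{T}}^{k})\leq C(R)$ \emph{pointwise} on the ball. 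Hence $f_N$ is bounded in $L^\infty$ on $B_{2R}$, deterministically and uniformly in $N$, which immediately yields $\mu_s(\Phi_{N,t}(D))\leq C(R)\mu_s(D)$ after Fubini and the invariance of $\mu_{s,N}^\perp$ under the free evolution. No Wick calculus or $L^p(d\mu_s)$ integrability of the density enters.

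This distinction matters for reaching the stated threshold. A probabilistic energy estimate at fixed time (the Tzvetkov/Oh--Tzvetkov route, which is what Gaussian integration by parts buys you) was already tried for weakly dispersive FNLS in \cite{FT} and gives strictly worse thresholds; and a DT-style $L^p(d\mu_s)$ bound requires controlling two factors in $L^\infty_tL^2_x$ via conservation, which, as explained in Remark~\ref{RMK:DTfrac}, fails to cover the full range $\frac12<\al<1$ and imposes $s>\frac{4-3\al}{2}$. So your proposed mechanism would not close at $s>\frac{3-2\al}{2}$. Relatedly, the threshold does not come from a ``worst resonant-like term'' traded against a Gaussian moment: after symmetrization and integration by parts in time (the normal-form step, which you correctly identify), the binding constraint is the purely deterministic Cauchy--Schwarz summation in the \emph{boundary} term $\N_0$, estimated by $\|u(t)\|_{H^\s}^4$ in \eqref{bdry1}; the cubic-in-time remainder terms only require $s>\frac{9-5\al}{8}$, which is weaker. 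To repair your proposal, replace the probabilistic $L^p$ estimate by the deterministic estimate of Lemma~\ref{LEM:energy2}, using the linear and bilinear Strichartz estimates of Lemma~\ref{LEM:StrichWD} for the remainder terms and direct convolution sums for the boundary term.
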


Theorem \ref{THM:1} is an optimal regularity result for the quasi-invariance of $\mu_s$ under the dynamics of \eqref{FNLS} when $\al \geq 1$. In particular, this includes the case of the cubic NLS ($\al=1$);  see Remark~\ref{Remark: Bou, Zhis quasi result}.
Our result improves on the main result in \cite{FT}, where the first author and Trenberth proved quasi-invariance of $\mu_s$ under \eqref{FNLS} for $s>\max(\frac 23, \frac {11}6-\al)$ when $\al \ge 1$. 
In addition, we also have an explicit formula for the corresponding Radon-Nikodym derivative and show that it is locally bounded with respect to $\mu_s$; see Proposition~\ref{PROP:Linfty}.
In the weakly dispersive setting, Theorem~\ref{THM:2} improves the regularity restriction of $s>\min\big( 1, \frac{11}{6}-\al\big)$ in \cite{FT}. 
Whilst our regularity result does not cover the full range in \eqref{optimals}, our problem forces us to go beyond current techniques. In particular, the key novelty in our proof is to combine the approaches in \cite{PTV} and \cite{DT2020}. Further improving the regularity restrictions in the weakly dispersive case $\frac{1}{2}<\al<1$ remains an interesting open problem.

\begin{remark}\rm \label{RMK:global}
The local-in-time nature of Theorem~\ref{THM:2} is an artefact of the lack of an almost sure global well-posedness result on the support of $\mu_s$ for FNLS~\eqref{FNLS} when $\tfrac{1}{2}<\al<1$ and $s\leq \tfrac{10\al+7}{12}$. 
Any improvement in this direction would lead to a corresponding improvement in Theorem~\ref{THM:2}; namely, we could `upgrade' local-in-time quasi-invariance to (global-in-time) quasi-invariance.
\end{remark}

The transport properties of Gaussian measures 
have been well studied in probability theory, beginning with the seminal work of Cameron-Martin \cite{Cameron};
see, \cite{Cameron, Ramer, Cru1, Cru2} for example. In particular, Ramer~\cite{Ramer} studied the quasi-invariance of Gaussian measures under general nonlinear transformations.
In \cite{Tz}, Tzvetkov initiated the study of the
transport properties of Gaussian measures on functions\,/\,distributions under nonlinear Hamiltonian PDEs
and there has been significant progress in this direction
\cite{Tz, OTz,  OTz2, OST, OTT,  PTV, GOTW, FT, STX, DT2020, OS}.

In \cite{Tz, OTz,  OTz2, OST, GOTW, FT, STX, OS}, 
an indirect method has been
an effective tool in showing the quasi-invariance property of the Gaussian measures under Hamiltonian flows. Namely, rather than directly proving the quasi-invariance of the Gaussian measure, one instead proves the quasi-invariance of a weighted Gaussian measure which is absolutely continuous with respect to the reference Gaussian measure. This indirect strategy is composed of (i) the construction of the weighted Gaussian measure, where the weight arises from correction terms stemming from the nonlinearity,
and (ii) an efficient energy estimate (with smoothing) on the time derivative of a corresponding modified energy.

In contrast, the approach we use to prove Theorem~\ref{THM:1} and Theorem~\ref{THM:2} is based on exploiting an explicit formula for the Radon-Nikodym derivative of the transported measure with an $L^2$-cutoff. This approach was introduced by Debussche and Tsutsumi \cite{DT2020}. More precisely, one obtains an explicit expression for the Radon-Nikodym derivative of this transported measure under dynamics which have been truncated to finite dimensions (truncated FNLS \eqref{TFNLS}) and prove the uniform $L^p$-integrability, $p>1$, of the Radon-Nikodym derivative in this truncation. 
We then obtain an explicit representation for the Radon-Nikodym derivative of the transported measure under the flow of FNLS \eqref{FNLS}, from which Theorem $\ref{THM:1}$ follows; see Section \ref{SEC:proof main}. 

In comparison to the indirect method mentioned above, an advantage of this method is that we can exploit 
the dispersion and temporal oscillations through the use of space-time estimates, such as the $L^4_{t,x}$-Strichartz estimates on $\T$. 
Namely, we establish an estimate for the integral in time of the derivative of the $H^{s}$-energy functional. 
 As an example, for solutions to FNLS~\eqref{FNLS}, the energy estimate (with smoothing) is
\begin{align}
\begin{split}
\bigg|\Re \int_{-T}^{0}  \jb{ i|\Phi_{t}(v)|^{2}\Phi_{t}(v),D^{2s}\Phi_{t}(v)}_{L^2(\T)}dt \bigg|  
&\leq C( \|\Phi_{t}(v)\|_{X_{T}^{0,\frac{1}{2}+}}) \| \Phi_t (v)\|_{X_{T}^{s-\frac{1}{2}-\eps,\frac{1}{2}+}}^{\ta},
\end{split}
\label{stenergy1}
\end{align}
where $\ta \leq 4$,\footnote{One can actually allow for $4\leq \ta <4+\eps_0$ for some small $\eps_0>0$; for example see Lemma~\ref{LEM:un1}.}, $\eps>0$ sufficiently small, $T>0$ and the spaces $X_{T}^{s,b}$ are the local-in-time Fourier restriction norm spaces $X^{s,b}$ which are defined in Subsection~\ref{SUBSEC:spaces}.
This is in contrast to the approaches 
in \cite{Tz, OTz,  OTz2, OST, GOTW, FT, STX} where the main energy estimates are established only for each fixed time $t$.  However, these approaches do have a complementing advantage: by reducing the analysis to time $t=0$, one can exploit random oscillations
in the random Fourier series \eqref{series1} and obtain a probabilistic energy estimate. This approach seems to work well when the dispersion is weaker, such as for nonlinear wave equations; see \cite{OTz2, GOTW, STX}.
 
We now return to discussing our main results Theorem~\ref{THM:1} and Theorem~\ref{THM:2}.
Let us first consider the case of high dispersion $\al \geq 1$ (Theorem~\ref{THM:1}).
Following the analysis in \cite{DT2020}, we can only obtain the optimal regularity result $s>\frac{1}{2}$ up to third-order dispersion ($\al \ge \frac 32$). Hence, below third-order dispersion ($1\leq \al< \frac 32$), we need to refine the analysis in \cite{DT2020}. 
The essential difference between our analysis and that in~\cite{DT2020} is that we employ a symmetrization argument, as in \cite{HPT, FT}, which allows us to gain extra decay through the mean value theorem and the double mean value theorem (Lemma~\ref{DMVT}).

In the weakly dispersive case $\frac{1}{2}<\al<1$, the method in \cite{DT2020} is no longer appropriate since we must work with the density of the transported measure with an $L^{2}$-cutoff. This $L^2$-cutoff is necessary 
in order to obtain the uniform $L^p$-integrability of the corresponding Radon-Nikodym derivative. 
The disadvantage here is that we need deterministic control of the flow of individual solutions by the $L^2$-norm of the initial data. 
In view of the regularity gap between the known well-posedness of FNLS~\eqref{FNLS} and $L_{x}^2$, 
we expect such control only through the $L^2$-conservation of solutions to \eqref{FNLS} at
each fixed time. 
Hence, we must use the weaker space-time norm $L^{\infty}([0,T];L^2_{x})$ instead of the $X^{0,\frac{1}{2}+}_{T}$ norm in \eqref{stenergy1}.
Note that this gap is due to the weaker dispersion which causes a derivative loss in the $L^{4}_{t,x}$-Strichartz estimate; see Lemma~\ref{LEM:StrichWD}. 
Thus, following the approach in \cite{DT2020}, we can only ever hope to obtain results for some $\al>\al_0>\frac{1}{2}$ and with a more restrictive range of $s$ than stated in Theorem~\ref{THM:2}; see Remark~\ref{RMK:DTfrac} for more details.

To go beyond this difficulty, we combine the local argument in \cite{PTV} with the density based approach in \cite{DT2020}. The benefit of our approach is two-fold: (i) we are no longer constrained by an $L^2$-cutoff in the measure and (ii) we weaken the space-time energy estimate \eqref{stenergy1} to 
\begin{align}
\begin{split}
\bigg|\Re \int_{-T}^{0}  \jb{ i|\Phi_{t}(v)|^{2}\Phi_{t}(v),D^{2s}\Phi_{t}(v)}_{L^2(\T)}dt \bigg|  
&\leq C\big(1+\| \Phi_t (v)\|_{X_{T}^{s-\frac{1}{2}-\eps,\frac{1}{2}+}}^{k}\big),
\end{split}
\label{stenergy2}
\end{align}
for some $k\in \NB$.
This allows us to obtain Theorem~\ref{THM:2}. 
In proving \eqref{stenergy2} for the weakly dispersive FNLS~\eqref{FNLS}, we exploit both the linear and bilinear Strichartz estimates in~\cite{ST1}.

\begin{remark}\rm
We point out that the same issue of dealing with $L^2$-cutoffs on the probability measures was faced in the analysis of FNLS~\eqref{FNLS} for $\frac{1}{2}<\al<1$ in \cite{FT}. 
In order to prove a quasi-invariance result for the full range $\frac{1}{2}<\al<1$, the first author and Trenberth applied the method due to Planchon, Tzvetkov and Visciglia~\cite{PTV}. The key point here is to argue locally within $H^{\s}(\T)$, which allows one to use deterministic growth bounds on solutions to \eqref{FNLS} to weaken the necessary fixed-time energy estimate. In \cite{GOTW}, the authors combined the approach in \cite{PTV} with the energy method in \cite{Tz, OTz,  OTz2} to produce a hybrid argument which has a further weakened energy estimate. This hybrid argument was then applied in \cite{FT} to obtain an improved regularity restriction in the restricted range $\al>\frac{5}{6}$.
\end{remark}

\begin{remark}\label{Remark: Bou, Zhis quasi result} \rm
\textup{(i)} For the particular case of the cubic NLS ($\al=1$), Theorem~\ref{THM:1} implies that the Gaussian measures $\mu_s$ are quasi-invariant under this NLS flow for any $s>\frac{1}{2}$. This extends implicit results in the works of Bourgain~\cite{Bo94} and Zhidkov~\cite{Zhid}.
Those authors showed that for each $k\in \mathbb{N}$, NLS has an invariant weighted Gaussian measure $\rho_k$ which is mutually absolutely continuous with the Gaussian measure $\mu_k$. The invariance of the measures $\rho_{k}$ imply the quasi-invariance of the Gaussian measures $\mu_k$ for each $k\in \mathbb{N}$. See also \cite{OTintro} for further discussion. 
\medskip

\noi
\textup{(ii)}  In this remark, we only consider the defocusing FNLS~\eqref{FNLS}; that is, \eqref{FNLS} with the negative sign.
Due to the Hamiltonian structure of FNLS~\eqref{FNLS}, it is natural to study the transport property of the following weighted Gaussian measure (Gibbs measure)
\begin{align*}
d\rho_{\al} = Z^{-1}e^{-\frac{1}{4}\int |u|^{4}dx}d\mu_{\al}.
\end{align*}
In~\cite{Bo94}, Bourgain proved that $\rho_{1}$ is invariant under the dynamics of NLS. This extends to $\al>\frac{2}{3}$, in the sense that $\rho_{\al}$ is invariant under the dynamics of FNLS~\eqref{FNLS}; see \cite{Demirbas}. In the remaining range $\frac{1}{2}<\al \leq \frac{2}{3}$, one no longer has deterministic well-posedness in the support of $\mu_{\al}$. In \cite{ST2}, Sun and Tzvetkov proved almost sure global well-posedness of FNLS~\eqref{FNLS} with respect to $\mu_{\al}$ for any $\al>\frac{31-\sqrt{233}}{28}\approx 0.562$ and hence the corresponding invariance of $\rho_{\al}$; see also \cite{ST1}.
 These results also imply the quasi-invariance of $\mu_{\al}$. Thus, the quasi-invariance of $\mu_{\al}$ under the dynamics of FNLS~\eqref{FNLS} with weak dispersion $\frac{1}{2}<\al<1$ may persist in regularities below those where deterministic (local) well-posedness holds.
  \end{remark}

\begin{remark}\rm
Whilst Theorem~\ref{THM:1} is optimal with respect to \eqref{optimals}, it is possible to inquire about the quasi-invariance of Gaussian measures supported below $L^2(\T)$ under suitably renormalized FNLS dynamics.
Recently, the second author with Oh~\cite{OS} proved the quasi-invariance of Gaussian measures $\mu_s$ in negative Sobolev spaces under the flow of the (renormalized) 4NLS for any $s>\frac{3}{10}$. 
In~\cite{OTzW}, Oh, Tzvetkov and Wang established the invariance of the white-noise measure $\mu_0$ under the flow of the (renormalized) 4NLS.
These results seem to be extendable to the (renormalized) FNLS for some $\al>1$.
\end{remark} 

\begin{remark}\rm 
The transport property of Gaussian-measures under the flow of Hamiltonian PDEs is intimately related to the strength of dispersive effects. Exploring this idea in the positive sense motivates our study of FNLS~\eqref{FNLS}. In the negative sense, Oh, Sosoe and Tzvetkov~\cite{OST} showed that Gaussian measures $\mu_s$ are not quasi-invariant under the following dispersionless ODE ($\al=0$): $i\dt u =|u|^2 u$.
See \cite{STX} for a similar result for a dispersionless system. Due to the change of variables $u(t,x)\mapsto u(t,x-t)$, this implies that $\mu_s$ is not quasi-invariant under the flow of the transport equation
\begin{align*}
i\dt u+i\dx u=|u|^2 u.
\end{align*}
We therefore expect that $\mu_s$ is also not quasi-invariant under the flow of the half-wave equation ($\al=\frac{1}{2}$), and it would be of interest to prove or disprove this claim. 
\end{remark}

\subsection{The Radon-Nikodym derivative of the transported measure}\label{SUBSEC:Density}

Before discussing further consequences of our results, we state the explicit formula for the Radon-Nikodym derivative of the transported measure when $\al\geq 1$; see Proposition~\ref{PROP:den} below. To this end, we first introduce some notations. Given $R > 0$, we use $B_R$ to denote the ball of radius $R$ in $L^2(\T)$
centered at the origin.
 Define $E_N$  by
\begin{align*}
E_N=\pi_{\leq N}L^2(\T)=\text{span} \{e^{inx}:\vert n \vert \leq N\}
\end{align*}

\noi
and let $E_N^\perp$ be the orthogonal complement of $E_N$
in $L^2(\T)$, where $\pi_{\leq N}$ is the (sharp) Dirichlet projection to frequencies $\{ |n|\leq N\}$.
Given $s\in \R$, let $\mu_s$ be the Gaussian measure on $H^{s-\frac{1}{2}-\eps }(\T)$ defined in \eqref{gauss1}. Then, we can write $\mu_s$ as
\begin{align*}
\mu_s=\mu_{s,N}\otimes \mu_{s,N}^\perp,
\end{align*}

\noi
where $\mu_{s,N}$ and $\mu_{s,N}^\perp$ are the marginal distributions of $\mu_s$ restricted to $E_N$ and $E_N^\perp$, respectively. In other words, $\mu_{s,N}$ and $\mu_{s,N}^\perp$ are induced probability measures under the following random Fourier series:
\begin{align*}
\pi_{\le N} \phi&:\omega \in \Omega \longmapsto 
\pi_{\le N} \phi(x;\omega)=\sum\limits_{ \vert n \vert \leq N}\frac{g_n(\omega)}{\jb{n}^s}e^{inx},\\
\pi_{> N} \phi&:\omega \in \Omega \longmapsto 
\pi_{> N} \phi(x;\omega)=\sum\limits_{\vert n \vert >N}\frac{g_n(\omega
	)}{\jb{n}^s}e^{inx},
\end{align*}

\noi
respectively. Formally, we can write $\mu_{s,N}$ and $\mu_{s,N}^\perp$ as
\begin{align*}
d\mu_{s,N}=Z_{s,N}^{-1}e^{-\frac{1}{2} \| \pi_{\leq N}\phi \|_{H^s}^2   }d\phi_N 
\qquad \text{and} \qquad 
d\mu_{s,N}^\perp=\widehat{Z}_{s,N}^{-1}e^{-\frac{1}{2}\| \pi_{>N}\phi\|_{H^s }^2  }d\phi_N^\perp, 
\end{align*}

\noi
where $d\phi_N$ and  $d\phi_N^\perp$ are (formally) the products of the Lebesgue measures
on the Fourier coefficients:
\begin{align*}
d\phi_N = \prod_{|n| \le N} d \ft \phi(n) 
\qquad \text{and} \qquad 
d\phi_N^\perp = \prod_{|n| >  N} d \ft \phi(n) .
\end{align*}

\noi
Given $r>0$, we also define measures $\mu_{s,r}$ and $\mu_{s,N,r}$ by
\begin{align*}
d\mu_{s,r}=\chi_{\{ \| \phi \|_{L^2}\le r  \} }d\mu_s \qquad \text{and} \qquad d\mu_{s,N,r}=\chi_{\{ \| \phi_N\|_{L^2}\le r  \} }d\mu_{s,N},
\end{align*}
where $\chi=\chi_{A}$ denotes the characteristic function of the set $A$.
Finally, we introduce the following truncated version of FNLS~\eqref{FNLS}. Given $N\in \NB$, we consider the Cauchy problem 
\begin{equation}
\begin{cases}
i\partial_t u_N+(-\dx^2)^\alpha u_N=\pm \pi_{\leq N}\big( |u_N|^2u_N\big), \quad (t,x)\in \R\times \T,\\
u|_{t=0}=\pi_{\leq N}\phi=\phi_N.
\end{cases}
\label{TFNLS}
\end{equation} 
When $\al\geq 1$, the global well-posedness 
of \eqref{TFNLS}  (Lemma~\ref{LEM:Superapprox}) implies we may define $\Phi_{N,t}(\cdot)$ to be the flow map of the truncated dynamics~\eqref{TFNLS} at time $t$: $\phi_N \in  H^{\s}(\T)  \to \Phi_{N,t}(\phi)=:u_N(t,\phi_N) \in H^{\s}(\T) $.

We now describe the approach of Debussche and Tsutsumi \cite{DT2020} for proving the quasi-invariance of Gaussian measures as applied to FNLS~\eqref{FNLS}. The key result is the following:

\begin{proposition}\label{PROP:den}
Let $\al\geq 1$, $N \in \NN$, $t\in \R$ and $r>0$. Then, the Radon-Nikodym derivative of the transported measure $(\Phi_{N,t})_*\mu_{s,N,r}$ with respect to $\mu_{s,N,r}$ is given by  

\begin{align}
\begin{split}
\frac {d({\Phi_{N,t}})_*\mu_{s,N,r} }{d\mu_{s,N,r} }&=f_N(t,\cdot) \\
&=
\exp\bigg(\mp\int_0^t  \textup{Re} \,  \jb{i (|u_N|^2u_N)(-t', \cdot ), D^{2s}u_N(-t', \cdot ) }_{L^2(\T)} dt' \bigg).
\end{split}
\label{R1}
\end{align}

\noi
 Suppose, in addition, that $\{f_N(t,\cdot )\}_{N \in \NN}$ is uniformly bounded in $L^p(d\mu_{s,r})$, for some $1<p<\infty$. Then, the Radon-Nikodym derivative of the transported measure $(\Phi_t)_*\mu_{s,r} $ with respect to $\mu_{s,r}$ is given by  
\begin{align*}
\frac {d({\Phi_t})_*\mu_{s,r} }{d\mu_{s,r} }=f(t,\cdot)=
\exp\bigg(\mp\int_0^t   \textup{Re} \, \jb{i (|u|^2u)(-t',\cdot ), D^{2s}u(-t',\cdot)   }_{L^2(\T)}    dt' \bigg)
\end{align*}

\noi
and it is in $L^p(d\mu_{s,r})$. 
\end{proposition}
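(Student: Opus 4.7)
My plan is to establish the two parts of the proposition in sequence. For the first part, note that the truncated equation \eqref{TFNLS} restricted to $E_N$ is a finite-dimensional Hamiltonian system, so the flow $\Phi_{N,t}$ preserves Lebesgue measure on $E_N$ by Liouville's theorem. Moreover, testing \eqref{TFNLS} against $\bar u_N$ and taking imaginary parts gives conservation of $\|u_N(t)\|_{L^2}$, so the indicator $\chi_{\{\|\cdot\|_{L^2}\le r\}}$ is invariant under $\Phi_{N,t}$. Combining these two facts with the change of variables $\phi_N = \Phi_{N,-t}(\psi_N)$, I obtain, for every Borel $A \subset E_N$,
$$
(\Phi_{N,t})_{*}\mu_{s,N,r}(A) = \int_A e^{-\frac{1}{2}\bigl(\|\Phi_{N,-t}(\psi_N)\|_{H^s}^2 - \|\psi_N\|_{H^s}^2\bigr)}\,d\mu_{s,N,r}(\psi_N),
$$
which identifies the density. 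To rewrite the exponent as the time integral in \eqref{R1}, I compute along the flow: using \eqref{TFNLS}, the self-adjointness and commutativity of $(-\dx^2)^\alpha$ and $D^{2s}$, the identity $\pi_{\le N} D^{2s}u_N = D^{2s}u_N$ (since $u_N \in E_N$), and the fact that $\textup{Re}\,\jb{iv,v}_{L^2}=0$, one obtains
$$
\tfrac{d}{dt'}\|u_N(t')\|_{H^s}^2 = \mp 2\,\textup{Re}\,\jb{i|u_N|^2 u_N(t'),\,D^{2s}u_N(t')}_{L^2(\T)}.
$$
Integrating over $[-t,0]$ and substituting $t' \mapsto -t'$ yields precisely \eqref{R1}.

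For the second part, I would pass to the limit $N\to\infty$ by lifting to the full space via the extended flow $\wt\Phi_{N,t}(\phi) := \Phi_{N,t}(\pi_{\le N}\phi) + S(t)\pi_{>N}\phi$, where $S(t) = e^{it(-\dx^2)^\alpha}$ is the linear propagator. Since $S(t)$ preserves $\mu_{s,N}^\perp$, the first part promotes to the identity
$$
\int F(\wt\Phi_{N,t}\phi)\,d(\mu_{s,N,r}\otimes\mu_{s,N}^\perp)(\phi) = \int F(\phi)\,f_N(t,\pi_{\le N}\phi)\,d(\mu_{s,N,r}\otimes \mu_{s,N}^\perp)(\phi)
$$
for bounded Borel $F$ on $H^\sigma(\T)$. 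The approximation result for \eqref{TFNLS} (Lemma~\ref{LEM:Superapprox}) gives $\wt\Phi_{N,t}\phi \to \Phi_t\phi$ in $H^\sigma(\T)$ for $\mu_s$-a.e.\ $\phi$, and together with continuity of the time integral appearing in the exponent in an $X^{s-\frac12-\eps,\frac12+}_T$-type norm, this upgrades to $f_N(t,\pi_{\le N}\phi) \to f(t,\phi)$ almost surely. Since $s>\frac12$, the law of $\|\phi\|_{L^2}^2$ under $\mu_s$ is non-atomic (it is a sum of independent weighted $\chi^2$-distributed variables), so $\chi_{\{\|\pi_{\le N}\phi\|_{L^2}\le r\}} \to \chi_{\{\|\phi\|_{L^2}\le r\}}$ $\mu_s$-a.s., and the left-hand side converges to $\int F\,d(\Phi_t)_{*}\mu_{s,r}$ by dominated convergence. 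The uniform $L^p$-hypothesis makes $\{f_N(t,\pi_{\le N}\cdot)\}$ uniformly integrable on $d\mu_{s,r}$, so Vitali's theorem gives $L^1(d\mu_{s,r})$-convergence to $f(t,\cdot)$; Fatou applied to $|f_N|^p$ then yields $f(t,\cdot)\in L^p(d\mu_{s,r})$.

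The main obstacle is the interplay between the discontinuous $L^2$-cutoff and the approximation procedure: one must ensure that no mass concentrates on the boundary $\{\|\phi\|_{L^2}=r\}$ in the limit (handled by the non-atomicity of $\|\phi\|_{L^2}^2$ under $\mu_s$) and that the almost sure convergence of $f_N$ to $f$ is strong enough for Vitali to apply (handled by the uniform $L^p$-bound). Away from these two points the argument is structurally a recast of Debussche--Tsutsumi~\cite{DT2020}; what makes the clean formula \eqref{R1} possible here is the finite-dimensional Hamiltonian structure of \eqref{TFNLS} combined with the fortuitous cancellation of the dispersive term in the time derivative of the $H^s$-energy.
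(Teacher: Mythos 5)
Your argument is correct and follows essentially the same route as the paper: Liouville's theorem, $L^2$-conservation and a change of variables give the finite-dimensional density, the fundamental theorem of calculus converts the exponent into the time integral, and the approximation property together with the uniform $L^p$ bound identifies the limiting density through test functions. The only (harmless) differences are that you pass to the limit via Vitali's theorem and uniform integrability where the paper extracts a weak $L^p$ limit and identifies it using the pointwise convergence supplied by Lemma~\ref{LEM:energy}, and that you lift the truncated flow by the linear propagator on the high frequencies (an idea the paper deploys only in the weakly dispersive case in Section~\ref{SEC:weak}), whereas in Section~\ref{SEC:proof main} the test functions are taken to depend only on $\pi_{\le N}\phi$ so no high-frequency evolution is needed.
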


The arguments in \cite{DT2020} essentially establish Proposition~\ref{PROP:den} when $\al\geq \frac{3}{2}$, which we extend to $\al \geq 1$.
In Proposition \ref{PROP:den}, the crucial assumption is the uniform $L^p$-integrability of the Radon-Nikodym derivative $f_N(t,\cdot)$ in \eqref{R1}, corresponding to the truncated flow \eqref{TFNLS}. This then implies the explicit formula for the Radon-Nikodym derivative of the transported measure $(\Phi_t)_{*}\mu_{s,r}$ with respect to $\mu_{s,r}$ and hence the mutual absolute continuity of $(\Phi_t)_{*}\mu_{s,r}$ and $\mu_{s,r}$.  Theorem \ref{THM:1} follows from this mutual absolute continuity and a limiting argument; see Section \ref{SEC:proof main}.

The proof of the uniform $L^p$-integrability of the Radon-Nikodym derivative is split into a deterministic step and a probabilistic step. In the deterministic step, one proves a deterministic energy estimate for $\log f_N(t,\cdot)$; see Lemma \ref{LEM:energy}.   
The probabilistic step then verifies the uniform $L^p$-integrability of this bound for $f_{N}(t,\cdot)$.
In \cite{DT2020}, the proof of this latter step uses a dyadic pigeon hole argument following~\cite{Bo94}, while we use the variational approach due to Barashkov and Gubinelli~\cite{BG} (the Bou\'e-Dupuis variational formula).

The approach outlined above is not the only way to verify the key assumption in Proposition~\ref{PROP:den}.
Indeed, higher integrability of the Radon-Nikodym derivative is equivalent to a quantitative quasi-invariance statement.

\begin{proposition} \textup{(}\cite[Proposition 3.4]{GLT}, \cite[Proposition 3.5]{BTh}\textup{)}\label{PROP:quantqi}
Let $s \in \R$, $\s\in \R$ satisfying \eqref{reg}, $\rho$ be a probability measure supported on $H^{\s}(\T)$ and, for each $t\in \R$, a measurable map $\Psi_{t}: H^{\s}(\T) \to H^{\s}(\T)$. 
Suppose there exists $0\leq \dl<1$ and $C=C(t,\dl)>0$ such that 
\begin{align}
(\Psi_{t})_{\ast}\rho(A)\le C \{\rho(A)\}^{1-\dl},
\label{Yudo}
\end{align}
for any measurable set $A\subseteq H^{\s}(\T)$. By the Radon-Nikodym theorem, there exists a non-negative $f(t,\cdot) \in L^1(d\rho)$ such that $d(\Psi_{t})_{\ast} \rho=f(t,\cdot)d\rho$. Then, 

\noi
\textup{(i)} The property \eqref{Yudo} holds for some $0\leq \dl<1$ if and only if $f(t,\cdot)\in  L^1(d\rho)\cap  L_{w}^{p}(d\rho)$ with $p=\frac{1}{\dl}$. Namely, $f(t,\cdot)\in L^1(d\rho)$ and there exists $C'>0$ such that we have 
\begin{align*}
\rho(\{ \phi\, :\,  |f(t,\phi)| >\ld ) \leq C' \jb{\ld}^{-p}, \quad \text{for all} \quad \ld >0.
\end{align*}

\noi
\textup{(ii)} The property \eqref{Yudo} holds for $\dl=0$ if and only if $f(t,\cdot)\in L^{\infty}(d\rho)$.

\end{proposition}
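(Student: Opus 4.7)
The plan is to work directly with the Radon--Nikodym density $f(t,\cdot)$ of $(\Psi_{t})_{\ast}\rho$ with respect to $\rho$. The hypothesis \eqref{Yudo} already forces $(\Psi_{t})_{\ast}\rho \ll \rho$ (since $\rho(A)=0$ implies $(\Psi_{t})_{\ast}\rho(A)=0$), so $f(t,\cdot)$ exists and $(\Psi_{t})_{\ast}\rho(A) = \int_{A} f(t,\phi)\,d\rho(\phi)$ for every measurable $A\subseteq H^{\sigma}(\T)$. Both parts of the statement then reduce to standard layer-cake manipulations applied to this density, so there is no genuine analytic obstacle; the only point requiring care is the numerology that matches the exponent $p = 1/\delta$ in both directions.

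For the forward direction of part (i), I would test \eqref{Yudo} on the superlevel sets $A_{\lambda}:=\{\phi\,:\,f(t,\phi)>\lambda\}$. Since $\int_{A_{\lambda}} f\,d\rho \geq \lambda\, \rho(A_{\lambda})$, the hypothesis yields $\lambda \rho(A_{\lambda}) \le C\rho(A_{\lambda})^{1-\delta}$, which rearranges to the weak-type bound $\rho(A_{\lambda}) \le C'\lambda^{-1/\delta} = C'\lambda^{-p}$; the $L^{1}$ bound is immediate from $\|f(t,\cdot)\|_{L^{1}(d\rho)} = (\Psi_{t})_{\ast}\rho(H^{\sigma}(\T)) = 1$. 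For the converse, split at any threshold $\lambda>0$:
\[
\int_{A} f\,d\rho \le \lambda\, \rho(A) + \int_{\{f>\lambda\}} f\,d\rho,
\]
and use the layer-cake formula
\[
\int_{\{f>\lambda\}} f\,d\rho = \lambda\,\rho(A_{\lambda}) + \int_{\lambda}^{\infty}\rho(A_{s})\,ds
\]
together with the weak-$L^{p}$ bound $\rho(A_{s})\le C s^{-p}$ to control the tail by $O(\lambda^{1-p})$ (valid since $p>1$). Optimizing the resulting inequality $\lambda\rho(A)+C\lambda^{1-p}$ in $\lambda$ by setting $\lambda = \rho(A)^{-1/p}$ balances both terms at the common value $\rho(A)^{1-1/p} = \rho(A)^{1-\delta}$, producing \eqref{Yudo}.

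Part (ii) is the endpoint case $\delta=0$, which reduces to the classical equivalence $f(t,\cdot)\in L^{\infty}(d\rho) \Leftrightarrow (\Psi_{t})_{\ast}\rho(A) \le C\rho(A)$ for every measurable $A$. One direction is immediate from $\int_{A} f\,d\rho \le \|f(t,\cdot)\|_{L^{\infty}(d\rho)}\,\rho(A)$. For the converse, test on $A = \{f(t,\cdot) > C+\varepsilon\}$: the chain of inequalities $(C+\varepsilon)\rho(A) \le \int_{A} f\,d\rho \le C\rho(A)$ forces $\rho(A)=0$ for every $\varepsilon>0$, hence $f(t,\cdot)\le C$ $\rho$-almost surely.
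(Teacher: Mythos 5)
Your proof is correct and follows essentially the same route as the cited references \cite{GLT, BTh} (the paper itself gives no proof of this proposition, only the citations, and reuses your part (ii) contradiction argument verbatim when establishing the local boundedness in Proposition~\ref{PROP:Linfty}): Chebyshev on the superlevel sets $\{f>\lambda\}$ for the forward direction, and a layer-cake splitting at an optimized threshold $\lambda=\rho(A)^{-1/p}$ for the converse. The only cosmetic points are that the degenerate case $\rho(A)=0$ in the converse of (i) should be dispatched separately (it is trivial since $(\Psi_t)_*\rho\ll\rho$), and that $\lambda^{-p}$ and $\jb{\ld}^{-p}$ are interchangeable up to constants because $\rho$ is a probability measure; neither affects the argument.
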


The above proposition holds in general whenever there are two finite measures $\mu,\nu$ on some measure space for which $\mu \ll \nu$, so that the Radon-Nikodym derivative exists. We considered particular measures in the above statement that are more relevant for our current discussion. 
We remark that in \cite{Tz, OTz,OST, OTz2}, the bounds \eqref{Yudo} were established for a weighted Gaussian measure with an appropriate cutoff and thus one has $L^p$-bounds for the associated Radon-Nikodym derivatives.

Applying our new approach in this paper to the strongly dispersive FNLS~\eqref{FNLS}, we obtain the following result.
\begin{proposition}\label{PROP:Linfty} Let $\al \geq 1$ and $s>\frac{1}{2}$.
For every $t\in \R$, the Radon-Nikodym derivative of the transported measure $(\Phi_t)_*\mu_s$ with respect to $\mu_s$, which exists from Theorem~\ref{THM:1}, belongs to $ L^1(d\mu_s)\cap L^{\infty}_{\textup{loc}}(d\mu_s) $. Moreover, we have the following explicit formula
\begin{align}
\frac {d({\Phi_t})_*\mu_{s} }{d\mu_{s} }(\phi)=f(t,\phi)=
\exp\bigg(\mp\int_0^t  \textup{Re} \,\jb{i (|u|^2u)(-t',\phi ), D^{2s}u(-t',\phi)   }_{L^2(\T)}    dt' \bigg)
\label{density}
\end{align}
for all $\phi\in L^2(\T)$. 
\end{proposition}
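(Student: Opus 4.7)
My plan has three steps, using Proposition~\ref{PROP:den} as the starting point and upgrading its $L^p$-bound to a pointwise local bound via the deterministic energy estimate \eqref{stenergy1}.

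\smallskip

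\emph{Step 1: Explicit density on $L^2$-cutoffs.} The uniform-in-$N$ $L^p(d\mu_{s,r})$-bound on $\{f_N(t,\cdot)\}$ verified in the proof of Theorem~\ref{THM:1} allows me to apply Proposition~\ref{PROP:den} with the $L^2$-cutoff of radius $r>0$. This yields the explicit formula
\[
\frac{d(\Phi_t)_*\mu_{s,r}}{d\mu_{s,r}}(\phi) = \exp\bigg(\mp\int_0^t \textup{Re}\,\langle i|u|^2u(-t',\phi),\,D^{2s}u(-t',\phi)\rangle_{L^2(\T)}\,dt'\bigg)
\]
on $B_r$, together with $L^p(d\mu_{s,r})$-integrability of the density.

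\smallskip

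\emph{Step 2: Globalising in $r$.} Because the $L^2$-norm is conserved by \eqref{FNLS} and the flow is globally defined $\mu_s$-a.s.\ (by Theorem~\ref{THM:1}), each $B_r$ is $\Phi_t$-invariant, and $\bigcup_{r>0} B_r$ has full $\mu_s$-measure. The densities on the various $B_r$ are mutually consistent (restriction of the formula on $B_{r'}$ to $B_r$ with $r<r'$ is the formula on $B_r$) and glue together into a single density $f(t,\cdot)$ of $(\Phi_t)_*\mu_s$ with respect to $\mu_s$ satisfying \eqref{density} for every $\phi$ in the support. The $L^1(d\mu_s)$-statement is immediate, as $\int f(t,\cdot)\,d\mu_s=(\Phi_t)_*\mu_s(L^2(\T))=1$.

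\smallskip

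\emph{Step 3: Upgrading to $L^\infty_{\textup{loc}}(d\mu_s)$.} Fix a bounded set $K$ in $H^{\sigma}(\T)$, $\sigma=s-\tfrac{1}{2}-\epsilon$. The deterministic energy estimate \eqref{stenergy1} applied to the flow starting at $\phi$ gives
\[
|\log f(t,\phi)| \leq C\bigl(\|\Phi_{\cdot}(\phi)\|_{X_T^{0,1/2+}}\bigr)\,\|\Phi_{\cdot}(\phi)\|_{X_T^{\sigma,1/2+}}^{\theta}.
\]
For $\phi\in K$, I control the $X^{0,1/2+}$-norm by $L^2$-conservation combined with the $L^2$-based local well-posedness of \eqref{FNLS} valid for $\al\ge 1$, and the $X^{\sigma,1/2+}$-norm by iterating the $H^\sigma$-local well-posedness over $[-|t|,|t|]$; the iteration length is controlled by the conserved $L^2$-norm, and the $H^\sigma$-norm growth is controlled deterministically on $K$ via the Bourgain/Strichartz machinery for $\al\ge 1$. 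Inserting these deterministic bounds into the display gives a finite constant $C(K,t)$ with $|\log f(t,\phi)|\leq C(K,t)$ for all $\phi\in K$, which is the desired $L^\infty_{\textup{loc}}(d\mu_s)$-bound.

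\smallskip

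\emph{Main obstacle.} The delicate point is Step~3: producing a genuinely deterministic, $K$-uniform bound on $\|\Phi_{\cdot}(\phi)\|_{X_T^{\sigma,1/2+}}$ iterated up to arbitrary time $t$. The Bou\'e--Dupuis route used for Theorem~\ref{THM:1} yields only integrated bounds against $d\mu_{s,r}$, whereas here one needs pointwise estimates on bounded subsets of $H^\sigma$, with the iteration constants kept uniform using the interplay between $L^2$-conservation and the $H^\sigma$-growth from Strichartz theory. Once these deterministic flow bounds are in hand, the $L^\infty_{\textup{loc}}$-statement is an immediate consequence of the explicit formula \eqref{density}.
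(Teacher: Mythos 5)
Your Steps 1 and 2 reproduce the paper's derivation of \eqref{density}: apply Proposition~\ref{PROP:den} on each $L^2$-ball $B_r$ and remove the cutoff using $L^2$-conservation; the paper implements the ``gluing'' as a limit $r\to\infty$ via continuity from below and monotone convergence, which is what your consistency-of-densities remark amounts to. That part is fine.

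The gap is in Step 3, and it is not the obstacle you identify. The deterministic flow bounds you worry about are already available for $\al\ge 1$ (Lemma~\ref{LEM:Superapprox}, i.e.\ Bourgain's $L^2$ global theory), so iterating local well-posedness is not the issue. The issue is a norm mismatch: the energy estimate (Lemma~\ref{LEM:energy}) controls $|\log f(t,\phi)|$ by $C(\|\phi\|_{L^2},t)\,\|\phi\|_{H^{\sigma}}^{3}$, so your pointwise argument only yields boundedness of $f(t,\cdot)$ on sets that are bounded in $H^{\sigma}$. But the localization asserted in Proposition~\ref{PROP:Linfty} --- and the one actually needed in Proposition~\ref{COR:stability} and in the identification of \eqref{Yudo} with $\dl=0$ --- is to $L^2$-balls $B_R\subset L^2(\T)$, precisely because only $L^2$-balls are invariant under $\Phi_t$. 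On an $L^2$-ball the $H^{\sigma}$-norm is $\mu_s$-essentially unbounded (for $\sigma>0$ the measure charges $\{\|\phi\|_{H^\sigma}>\Lambda\}\cap B_R$ for every $\Lambda$), so your upper bound diverges there and gives no $L^\infty(\chi_{B_R}d\mu_s)$ control.

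The paper's route is different and is designed exactly to get around this: it first runs the local-in-phase-space argument of the proof of Theorem~\ref{THM:2} (with Lemma~\ref{LEM:approx} and Lemma~\ref{LEM:energy2} replaced by Lemma~\ref{LEM:Superapprox} and Lemma~\ref{LEM:energy}) to obtain the measure inequality $\mu_s(\Phi_t(A))\le C(R)\,\mu_s(A)$ for every measurable $A\subseteq B_R\subset L^2(\T)$ (first for compact $A$, then for general $A$ by inner regularity as in Corollary~\ref{COR:density}), and then deduces $\chi_{B_R}f(t,\cdot)\in L^\infty(d\mu_s)$ by the contradiction argument of Burq--Thomann (this is the content of Proposition~\ref{PROP:quantqi}\,(ii)): if $f>2C(R)$ on a set $A_R\subseteq B_R$ of positive measure, then $\mu_s(\Phi_t(A_R))=\int_{A_R}f\,d\mu_s>2C(R)\mu_s(A_R)$, a contradiction. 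This converts a measure-level (quantitative quasi-invariance) bound into an essential-supremum bound without ever producing a pointwise bound on $f$ over the $L^2$-ball. To repair your Step 3 you should replace the pointwise estimate by this measure-theoretic mechanism.
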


The claim \eqref{density} follows from Theorem~\ref{THM:1} and Proposition~\ref{PROP:den};  see Section \ref{SEC:weak}.
The local boundedness follows from using Lemma~\ref{LEM:energy} in the argument of Section \ref{SEC:weak}, which implies that \eqref{Yudo} holds for any measurable set $A\subseteq B_{R} \subset L^2(\T)$, $R>0$, and for $\dl=0$, where $\rho=\mu_{s}$ and $\Psi_{t}$ is the flow of the strongly dispersive FNLS~\eqref{FNLS}.
This is a general feature of our argument and a similar partial quantitative estimate is obtained by the approach in \cite{PTV} and the hybrid argument in \cite{GOTW} (albeit with some $\dl>0$ and for appropriately weighted Gaussian measures).

We note that the higher local integrability in Proposition~\ref{PROP:Linfty} has the following application to the higher (local) integrability of solutions to the infinite-dimensional Liouville equation associated to $\mu_s$:
\footnote{We thank Nikolay Tzvetkov for suggesting this application to us.}

\begin{proposition}\label{COR:stability}
Let $\al\geq 1$, $s>\frac{1}{2}$, $\Phi_{t}$ denote the flow of FNLS~\eqref{FNLS} and $f(t,\cdot)$ denote the Radon-Nikodym derivative of $(\Phi_{t})_{\ast}\mu_s$ with respect to $\mu_s$ as in Proposition~\ref{PROP:Linfty}.
Given any $1\leq p<\infty$, let $g_1, g_2 \in L^1(d\mu_s)\cap L^p_{\textup{loc}}(d\mu_s)$. Then, for any $t\in \R$, the transports of the measures 
\begin{align*}
g_1(\phi)d\mu_s(\phi), \quad \text{and} \quad g_2(\phi)d\mu_s(\phi)
\end{align*}
by $\Phi_t$ are given by 
\begin{align*}
G_1(t,\phi)d\mu_{s}(\phi)\quad \text{and} \quad G_2(t,\phi)d\mu_s(\phi),
\end{align*}
resepectively, for suitable $G_1(t,\cdot),G_{2}(t,\cdot)\in L^1(d\mu_s)\cap L^p_{\textup{loc}}(d\mu_s)$.
Moreover, for any $R>0$, we have 
\begin{align}
\big\| \chi_{B_R} \big( G_{1}(t,\cdot)-G_{2}(t,\cdot) \big)\big\|_{L^p(d\mu_s)}\leq \big\| \chi_{B_R} f(t,\cdot) \big \|_{L^{\infty}(d\mu_s)}^{1-\frac{1}{p}} \| \chi_{B_{R}}( g_1-g_2)\|_{L^p(d\mu_s)},
\label{Lpstab}
\end{align}
when $p>1$, and 
\begin{align}
\| G_1(t,\cdot)-G_2(t,\cdot)\|_{L^1(d\mu_s)}=\|g_1-g_2\|_{L^1(d\mu_s)}. \label{L1stab}
\end{align}
\end{proposition}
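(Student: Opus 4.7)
The plan is to derive an explicit representation for $G_1$ and $G_2$ in terms of $g_1, g_2$, the inverse flow $\Phi_{-t}$, and the Radon-Nikodym derivative $f(t,\cdot)$ supplied by Proposition~\ref{PROP:Linfty}, and then reduce both stability estimates to a single change-of-variables computation that exploits the quasi-invariance of $\mu_s$ together with the $L^2$-conservation of FNLS~\eqref{FNLS}.

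First, $(\Phi_t)_{\ast}(g_i\, d\mu_s) \ll \mu_s$ follows immediately from Theorem~\ref{THM:1} (since $g_i \in L^1(d\mu_s)$), so the Radon-Nikodym derivative $G_i(t,\cdot) := d(\Phi_t)_{\ast}(g_i\, d\mu_s)/d\mu_s$ exists. Testing against any non-negative measurable $F$ and using $d(\Phi_t)_{\ast}\mu_s = f(t,\cdot)\, d\mu_s$ from Proposition~\ref{PROP:Linfty},
\begin{align*}
\int F(\psi)\, G_i(t,\psi) \, d\mu_s(\psi)
& = \int F(\Phi_t(\phi))\, g_i(\phi) \, d\mu_s(\phi) \\
& = \int F(\psi)\, g_i(\Phi_{-t}(\psi))\, f(t,\psi) \, d\mu_s(\psi),
\end{align*}
which forces $G_i(t,\psi) = g_i(\Phi_{-t}(\psi))\, f(t,\psi)$ $\mu_s$-a.e., with the composition $g_i \circ \Phi_{-t}$ defined $\mu_s$-a.e.\ by quasi-invariance. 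In particular, $G_1 - G_2 = (g_1 - g_2)\circ \Phi_{-t}\cdot f(t,\cdot)$.

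Setting $h = g_1 - g_2$, I split $f^p = f^{p-1}\cdot f$ in the $L^p$ integrand and absorb the last factor via $f(t,\psi)\, d\mu_s(\psi) = d(\Phi_t)_{\ast}\mu_s(\psi)$. Changing variables $\psi = \Phi_t(\phi)$ and using the $L^2$-conservation of \eqref{FNLS} (so that $\chi_{B_R}(\Phi_t(\phi)) = \chi_{B_R}(\phi)$) yields
\[
\int \chi_{B_R}(\psi)\, |G_1-G_2|^p(t,\psi)\, d\mu_s(\psi) = \int \chi_{B_R}(\phi)\, |h(\phi)|^p\, f(t,\Phi_t(\phi))^{p-1}\, d\mu_s(\phi).
\]
Since $\phi \in B_R$ forces $\Phi_t(\phi) \in B_R$, the local $L^\infty$-bound of Proposition~\ref{PROP:Linfty} controls $f(t,\Phi_t(\phi))^{p-1}$ by $\|\chi_{B_R} f(t,\cdot)\|_{L^\infty(d\mu_s)}^{p-1}$; taking $p$-th roots yields \eqref{Lpstab}. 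The identity \eqref{L1stab} is the $p=1$ analogue: the same change of variables without the cutoff and without any $L^\infty$ input gives $\int |h \circ \Phi_{-t}|\, f\, d\mu_s = \int |h|\, d\mu_s$, the factor $f$ cancelling exactly. The memberships $G_i \in L^1(d\mu_s) \cap L^p_{\textup{loc}}(d\mu_s)$ follow by setting $g_2 \equiv 0$ (so $G_2 \equiv 0$) in each of the two estimates.

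The argument is essentially a direct consequence of Proposition~\ref{PROP:Linfty}, so the main---albeit modest---obstacle lies in the measure-theoretic bookkeeping: the $\mu_s$-a.e.\ well-definedness of $g_i \circ \Phi_{-t}$ and the legitimacy of the change of variables, both of which reduce to the invariance of $\mu_s$-null sets under $\Phi_{\pm t}$ ensured by Theorem~\ref{THM:1}. The structural mechanism powering the whole proof is the compatibility of the local $L^\infty$-bound on $f$ with the $L^2$-conservation of \eqref{FNLS}: the latter keeps the $L^2$-ball $B_R$ invariant under the flow, so the former is usable on both sides of the change of variables.
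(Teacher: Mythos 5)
Your proposal is correct and follows essentially the same route as the paper: derive $G_i(t,\cdot)=g_i\circ\Phi_{-t}\cdot f(t,\cdot)$ by testing against functions and using $d(\Phi_t)_*\mu_s=f\,d\mu_s$, then split $f^p=f^{p-1}\cdot f$, absorb the last factor into a change of variables, and use the $L^2$-conservation (so $B_R$ is flow-invariant) together with the local $L^\infty$-bound on $f$. The only difference is the order in which you apply the pointwise bound on $f^{p-1}$ and the change of variables, which is immaterial.
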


We point out that if the Radon-Nikodym derivative $f(t,\cdot)$ was (globally) in $L^{\infty}(d\mu_s)$, then \eqref{Lpstab} would become
\begin{align*}
\big\|  G_{1}(t,\cdot)-G_{2}(t,\cdot) \big\|_{L^p(d\mu_s)}\leq \big\| f(t,\cdot) \big \|_{L^{\infty}(d\mu_s)}^{1-\frac{1}{p}} \| g_1-g_2\|_{L^p(d\mu_s)}.
\end{align*}
The application of quasi-invariance of Gaussian measures (more generally, of arbitrary probability measures) to the $L^1$-stability statement, as in \eqref{L1stab}, was first noted by Sun and Tzvetkov in \cite{ST1}. 
The result of Proposition~\ref{COR:stability} shows that the higher (local) integrability of the Radon-Nikodym derivative with respect to the reference Gaussian measure $\mu_s$ implies stability locally in $L^p(d\mu_s)$, for $p>1$.

We also mention another application of Proposition~\ref{PROP:Linfty} to wave turbulence theory.
The main concern here is to 
obtain a statistical description of the out-of-equilibrium dynamics of a nonlinear
dispersive PDE; see for instance the books \cite{Zak1, Naz}.
In particular, by sampling initial data from some probability distribution, such as
the Gaussian distribution $\mu_s$ in \eqref{gauss1}, one then aims to study the evolution in time of certain averaged quantities of the solutions $u(t,x)$. 
One such key quantity is the two-point function $\{N(n,t)\}_{n \in \Z}$, which represents the average energy 
stored at each frequency $n\in \Z$, and are defined by
\begin{align}
N(n,t)=\E[|\ft u_n(t)|^2 ].
\label{twopoint}
\end{align}
\noi
The aim in wave turbulence theory is then to derive an effective equation, called the wave kinetic equation,
for the evolution of the two-point function $\{N(n,t)\}_{n \in \Z}$ and hence the distribution of energy at each frequency; see for example \cite{DH}.

In~\cite[Corollary 1.4]{OTz2}, Oh and Tzvetkov showed that there is a formula for the two-point function \eqref{twopoint}, 
in terms of the Radon-Nikodym derivative of the transported Gaussian measure. This reduces the study of the two-point functions to studying the dynamical properties of the Radon-Nikodym derivative. 
Whilst the result there was stated under the assumption that the Radon-Nikodym derivative belongs to $L^2(d\mu_{s})$, it turns out that it still holds under the weaker assumption $L^1(d\mu_s)$\footnote{We learnt from the authors in \cite{OTz2} that the hypothesis in \cite[Corollary 1.4]{OTz2} may be weakened from $L^2(d\mu_s)$ to $L^1(d\mu_s)$.}.
In the following corollary, we further show that the two-point functions $\{N(n,t)\}_{n \in \Z}$ can be expressed in terms of the explicit formula \eqref{density} for the Radon-Nikodym derivative.
It is interesting then to understand if this explicit formula may be of further use in this setting. 

\begin{corollary}\label{COR:two}
Let $\al\geq 1$, $s>\frac{1}{2}$, and $\Phi_{t}$ denote the flow of FNLS~\eqref{FNLS}. Then, we have	
\begin{align*}
N(n,t)&=\int_{L^2(\T)}  |\ft \phi (n)|^2 \frac{d(\Phi_t)_*\mu_s}{d\mu_s}(\phi) d\mu_s(\phi)\\
&=\int_{L^2(\T)}  |\ft \phi (n)|^2 
\exp\bigg(\mp\int_0^t  \textup{Re} \,\jb{i (|u|^2u)(-t',\phi ), D^{2s}u(-t',\phi)   }_{L^2(\T)}    dt' \bigg) d\mu_s(\phi)
\end{align*}

\noi
for any $n\in \Z$ and $t\in \R$, where $N(n,t)$ is the two-point function defined in \eqref{twopoint}.
\end{corollary}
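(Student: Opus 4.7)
The plan is to prove Corollary~\ref{COR:two} as a direct unpacking of the definition of the two-point function combined with the change of variables formula for pushforward measures and the explicit density already provided by Proposition~\ref{PROP:Linfty}. Since both the quasi-invariance (Theorem~\ref{THM:1}) and the explicit formula for $d(\Phi_t)_*\mu_s / d\mu_s$ (Proposition~\ref{PROP:Linfty}) are in hand, no new PDE or measure-theoretic ingredients should be needed; the argument is essentially a two-line computation that I would present in three steps.

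First, I would unfold the definition \eqref{twopoint}. Writing $u(t) = \Phi_t(\phi)$ and interpreting the expectation as integration against $\mu_s$ (the law of the initial data $\phi$), one has
\begin{align*}
N(n,t) \;=\; \E\bigl[|\ft u_n(t)|^2\bigr] \;=\; \int |\widehat{\Phi_t(\phi)}(n)|^2\, d\mu_s(\phi).
\end{align*}
Second, I would apply the standard change of variables formula for the pushforward measure $(\Phi_t)_*\mu_s$ with the nonnegative measurable function $\psi\mapsto |\ft\psi(n)|^2$, obtaining
\begin{align*}
N(n,t) \;=\; \int |\ft\psi(n)|^2\, d(\Phi_t)_*\mu_s(\psi).
\end{align*}
Third, by Theorem~\ref{THM:1}, $(\Phi_t)_*\mu_s \ll \mu_s$, and Proposition~\ref{PROP:Linfty} identifies the Radon--Nikodym derivative as $f(t,\psi)$ with the explicit exponential formula \eqref{density}. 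Substituting this density yields
\begin{align*}
N(n,t) \;=\; \int |\ft\psi(n)|^2 f(t,\psi)\, d\mu_s(\psi),
\end{align*}
which is precisely the claimed expression once \eqref{density} is plugged in.

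The only point that requires a brief remark is finiteness and the legitimacy of the change of variables. Since $\psi\mapsto |\ft\psi(n)|^2$ is nonnegative and measurable, the change of variables identity holds in $[0,\infty]$ regardless of integrability, so finiteness need only be checked once. For $s>\tfrac12$ the Gaussian measure $\mu_s$ is supported in $L^2(\T)$, and by the $L^2$-conservation of FNLS we have $\|\Phi_t(\phi)\|_{L^2}=\|\phi\|_{L^2}$; thus $|\widehat{\Phi_t(\phi)}(n)|^2 \le \|\phi\|_{L^2}^2$, which has finite $\mu_s$-expectation by Fernique's theorem. This closes the argument, and I do not anticipate any genuine obstacle: the substantive work has already been done in establishing Theorem~\ref{THM:1} and Proposition~\ref{PROP:Linfty}.
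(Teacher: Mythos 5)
Your argument is correct and is essentially the same as the paper's: the paper obtains the corollary by combining \cite[Corollary 1.4]{OTz2} (with the $L^2(d\mu_s)$ hypothesis weakened to $L^1(d\mu_s)$) with the explicit density from Proposition~\ref{PROP:Linfty}, which amounts to exactly the change-of-variables computation you carry out, and your finiteness check via $L^2$-conservation is the right one.
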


\subsection{Organization of the paper} 
In Section~\ref{SEC:not}, we introduce some notations and preliminary estimates, such as the linear and bilinear Strichartz estimates. 
In Section~\ref{SEC:proof main}, we present the proof of Proposition \ref{PROP:den} and then prove Theorem~\ref{THM:1} by assuming Lemma~\ref{LEM:energy} and the uniform $L^p$-integrability of the Radon-Nikodym derivative.
The energy estimate (Lemma \ref{LEM:energy}) is proved in Section~\ref{SEC:energy} while we show the uniform $L^p$-integrability in Section \ref{SEC:Lp}. In Section~\ref{SEC:weak}, first by assuming a weaker space-time energy estimate (Lemma~\ref{LEM:energy2}) and using a hybrid argument (combining the approaches in \cite{PTV} and \cite{DT2020}), we present the proof of Theorem \ref{THM:2} and establish the weaker space-time energy estimate (Lemma~\ref{LEM:energy2}).

\section{Notations and preliminary estimates}\label{SEC:not}

\subsection{Notations}
In the following, we fix small $\eps>0$  and set
\begin{align*}
\s = s-\frac{1}{2}-\eps
\end{align*}

\noi
such that \eqref{reg} is satisfied.
Given $N \in  \NN \cup\{\infty\}$, we use $\pi_{\leq N}$ to denote the Dirichlet projection onto the frequencies 
$\{|n| \leq N \}$ and set $\pi_{>N}:=\text{Id}-\pi_{\leq N}$. When $N=\infty$, 
it is understood that $\pi_{\leq N}=\text{Id}$. Given dyadic $M>0$, we let $\P_{M}$ denote the Littlewood-Payley projector onto frequencies $\{|n|\sim M\}$, such that 
\begin{align*}
f=\sum_{\substack{M\geq 1 \\ \text{dyadic}}}^{\infty} \P_{M}f.
\end{align*}
When $M=1$, $\P_{1}$ is a smooth projector to frequencies $\{|n|\les 1\}$.

We  use $a+$ (and $a-$) to denote $a + \eps$ (and $a - \eps$, respectively) for arbitrarily small $\eps \ll 1$,
where an implicit constant is allowed to depend on $\eps > 0$
(and it usually diverges as $\eps \to 0$).

\subsection{Preliminary estimates}
In this subsection, we record some elementary estimates that will be useful in our analysis. 

In our approach, the phase function 
\begin{align}
\Phi_{\al}(\cj n):=\Phi(n_1,n_2,n_3,n_4):=|n_1|^{2\al}-|n_2|^{2\al}+|n_3|^{2\al}-|n_4|^{2\al} \label{p1}
\end{align}
naturally arises as the source of dispersion. 
In order to exploit this for a smoothing benefit, we crucially rely on the following lower bound (see Lemma \ref{lemma: phase lower bound}): for $\al>\tfrac 12$, we have
\begin{align*}
\left|\Phi_{\al}(\overline{n})\right|
& \gtrsim |n_4-n_1||n_4-n_3|n_{\max}^{2\alpha-2} \qquad \text{when} \qquad n_4=n_1-n_2+n_3 
\end{align*}
and where $n_{\max}:=\max( |n_1|,|n_2|,|n_3|,|n_4|)+1$.
This lower bound first appeared in the setting $\frac{1}{2}<\al\leq 1$ in \cite{demirbas2013existence} and the first author and Trenberth established it for $\al>1$ (see Lemma \ref{lemma: phase lower bound}). It can be viewed as a replacement of the explicit factorisations available for the phase function \eqref{p1} of NLS 
\begin{align*}
\Phi_{1}(\cj n)=n_1^{2}-n_2^2+n_3^2-n_4^2=-2(n_4-n_1)(n_4-n_3)  \qquad \text{when }\, n_4=n_1-n_2+n_3,
\end{align*}
and 4NLS (see \cite[Lemma 3.1]{OTz}). 
In particular, we rely on an understanding of the ratio 
\begin{align}
\frac{\Psi_{s}(\cj{n})}{\Phi_{\al}(\cj{n})}, 
\label{ratio1}
\end{align}
 where $\Psi_{s}(\cj{n}):=\jb{n_1}^{2s}-\jb{n_2}^{2s}+\jb{n_3}^{2s}-\jb{n_4}^{2s}.$
A key tool for this analysis is the double mean value theorem; see \cite[Lemma 2.3]{CKSTT}.
\begin{lemma}\label{DMVT}
	Let $\xi,\eta,\lambda\in \R$ and $f\in C^2(\R)$. Then, we have
	\begin{equation*}
	f(\xi+\eta+\lambda)-f(\xi+\eta)-f(\xi+\lambda)+f(\xi)=\lambda\eta\int_0^1\int_0^1f''(\xi+t_1\lambda+t_2\eta)\,dt_1dt_2.
	\end{equation*}
\end{lemma}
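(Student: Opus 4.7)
The plan is to apply the fundamental theorem of calculus twice, converting the second-order finite difference on the left-hand side into a double integral of $f''$. The key observation is that the left-hand side is a ``mixed second difference'' in the two independent increments $\eta$ and $\lambda$, which naturally suggests parametrizing each increment separately by a variable in $[0,1]$ so that $\eta$ and $\lambda$ factor out cleanly.

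Concretely, I would first introduce the auxiliary function $g(u) := f(\xi + u + \lambda) - f(\xi + u)$, so that the left-hand side equals exactly $g(\eta) - g(0)$. By the fundamental theorem of calculus followed by the change of variable $u = t_2\eta$, this difference can be written as $\eta \int_0^1 g'(t_2\eta)\, dt_2$. Next I would expand $g'(u) = f'(\xi + u + \lambda) - f'(\xi + u)$ and apply the same trick to this single-variable difference: writing it as $\lambda\int_0^1 f''(\xi + u + t_1\lambda)\, dt_1$, once again by the fundamental theorem of calculus together with a rescaling.

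Substituting the second expression into the first and invoking Fubini (justified because $f'' \in C(\R)$ is bounded on the compact image of the integration region) produces precisely $\lambda\eta\int_0^1\int_0^1 f''(\xi + t_1\lambda + t_2\eta)\, dt_1\, dt_2$, which is the claimed identity. Since the argument is essentially two mechanical applications of the fundamental theorem of calculus, there is no genuine analytic obstacle; the only thing to watch is the bookkeeping, so that the roles of $(t_1,\lambda)$ and $(t_2,\eta)$ appear in the symmetric form stated in the lemma rather than in the asymmetric order in which they were introduced.
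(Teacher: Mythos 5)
Your proof is correct: the two successive applications of the fundamental theorem of calculus (first in the $\eta$-increment via $g(u)=f(\xi+u+\lambda)-f(\xi+u)$, then in the $\lambda$-increment for $g'$) yield exactly the stated identity, and this is the standard argument for the double mean value theorem. The paper itself gives no proof, simply citing \cite[Lemma 2.3]{CKSTT}, so there is nothing to compare beyond noting that your argument is the expected one.
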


\noi
The following result is a direct consequence of Lemma~\ref{DMVT}, using that the map $x\mapsto \jb{x}^{2s-2}$ is non-decreasing when $s\geq 1$.

\begin{lemma}\label{Lemma: DMVT application}
	Fix $s\geq  1$ and let $n_1,n_2,n_3,n_{4}\in \Z$ be such that $n_{4}=n_1-n_2+n_3$. Then, we have
	\begin{equation*}
	|\Psi_{s}(\cj{n})|\lesssim |n_4-n_1| |n_4-n_3|n_{\max}^{2s-2},
	\end{equation*}
	where $n_{\max} = \max(|n_1|,|n_2|,|n_3|,|n_4|)+1$ and the implicit constant depends only on $s$.
\end{lemma}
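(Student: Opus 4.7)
The plan is to apply the double mean value theorem (Lemma~\ref{DMVT}) to $f(x):=\jb{x}^{2s}$ with a judicious choice of parameters dictated by the constraint $n_4=n_1-n_2+n_3$, i.e., $n_2=n_1+n_3-n_4$. Specifically, I would set
\[
\xi=n_4, \qquad \lambda=n_1-n_4, \qquad \eta=n_3-n_4,
\]
so that $\xi=n_4$, $\xi+\lambda=n_1$, $\xi+\eta=n_3$, and $\xi+\lambda+\eta=n_1+n_3-n_4=n_2$. Noting that $\Psi_s(\bar n)=-\bigl(f(n_2)-f(n_1)-f(n_3)+f(n_4)\bigr)$, Lemma~\ref{DMVT} then yields the identity
\[
\Psi_s(\bar n)=-(n_1-n_4)(n_3-n_4)\int_0^1\!\!\int_0^1 f''\bigl(n_4+t_1(n_1-n_4)+t_2(n_3-n_4)\bigr)\,dt_1\,dt_2.
\]

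A direct computation gives $f''(x)=2s\jb{x}^{2s-2}+2s(2s-2)x^{2}\jb{x}^{2s-4}$, so for $s\geq 1$ we have the bound $|f''(x)|\lesssim_{s}\jb{x}^{2s-2}$. Writing
\[
n_4+t_1(n_1-n_4)+t_2(n_3-n_4)=(1-t_1-t_2)n_4+t_1 n_1+t_2 n_3,
\]
the triangle inequality (with $t_1,t_2\in[0,1]$) bounds this in absolute value by $|n_1|+|n_3|+|n_4|\leq 3\,n_{\max}$. Since $s\geq 1$ implies $2s-2\geq 0$, so that $x\mapsto\jb{x}^{2s-2}$ is non-decreasing in $|x|$, we conclude $|f''(\cdots)|\lesssim_s n_{\max}^{2s-2}$ uniformly in $t_1,t_2\in[0,1]$.

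Inserting this pointwise bound into the integral representation above and trivially evaluating the $t_1,t_2$ integrals yields
\[
|\Psi_s(\bar n)|\lesssim_{s}|n_1-n_4|\,|n_3-n_4|\,n_{\max}^{2s-2},
\]
which is the desired estimate. There is no real obstacle here: the only subtle point is the bookkeeping that aligns the DMVT parameters $(\xi,\lambda,\eta)$ with the combinatorial constraint $n_4=n_1-n_2+n_3$, and the use of the hypothesis $s\geq 1$ to convert the upper bound on $|f''|$ into a bound by $n_{\max}^{2s-2}$ via monotonicity.
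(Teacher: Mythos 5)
Your proof is correct and is exactly the argument the paper intends: the lemma is stated there as a ``direct consequence'' of Lemma~\ref{DMVT} together with the monotonicity of $x\mapsto\jb{x}^{2s-2}$ for $s\geq 1$, and your choice $\xi=n_4$, $\lambda=n_1-n_4$, $\eta=n_3-n_4$ with $f(x)=\jb{x}^{2s}$ is the standard (and correct) way to align the DMVT with the constraint $n_4=n_1-n_2+n_3$. The sign bookkeeping, the bound $|f''(x)|\lesssim_s\jb{x}^{2s-2}$, and the estimate of the argument by $3n_{\max}$ are all accurate.
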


The use of Lemma~\ref{Lemma: DMVT application} was sufficient to obtain the quasi-invariance results for \eqref{FNLS} in \cite{FT} when $\frac{1}{2}<\al<1$ and $s>1$. 
In this paper, however, we need to refine the analysis in Lemma~\ref{Lemma: DMVT application} to consider $s\leq  1$.

\begin{lemma}\label{LEM:Psis}
	Let $n_1,n_2, n_3,n_4\in \Z$ such that $n_4=n_1-n_2+n_3$. Then:
	
\noi	
\textup{(i)} If $|n_4|\sim |n_1|\sim |n_2|\sim |n_3|$ and $\max(|n_4-n_1|,|n_4-n_3|)\ll |n_3|$, we have 
		\begin{align}
		|\Psi_{s}(\cj{n}) |\les  |n_4-n_1||n_4-n_3| n_{\textup{max}}^{2s-2}, \label{Psis1}
		\end{align}
		for any $s\in \R$.
\medskip
	
\noi
\textup{(ii)} If $|n_4|\sim |n_1|\gg \min(|n_2|,|n_3|)$ and $s\geq \frac{1}{2}$, we have
		\begin{align}
		|\Psi_{s}(\cj{n})|\les  |n_4-n_1|n_{\textup{max}}^{2s-1}. \label{Psis2}
		\end{align}
\end{lemma}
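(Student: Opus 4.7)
For both parts, the natural tool is to apply a Taylor-type expansion to $f(x) = \jb{x}^{2s}$, exploiting the constraint $n_4 = n_1 - n_2 + n_3$. Note that $|f'(x)| \lesssim \jb{x}^{2s-1}$ and $|f''(x)| \lesssim \jb{x}^{2s-2}$ by direct computation from $f(x) = (1+x^2)^{s}$.

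For part \textup{(i)}, the plan is to apply the double mean value theorem (Lemma~\ref{DMVT}) with $\xi = n_2$, $\eta = n_4 - n_1$, $\lambda = n_4 - n_3$. The relation $n_4 = n_1 - n_2 + n_3$ gives
\begin{align*}
\xi + \eta = n_3, \qquad \xi + \lambda = n_1, \qquad \xi + \eta + \lambda = n_4,
\end{align*}
so that Lemma~\ref{DMVT} produces
\begin{align*}
-\Psi_{s}(\cj{n}) &= f(n_4) - f(n_3) - f(n_1) + f(n_2) \\
&= (n_4 - n_1)(n_4 - n_3) \int_0^1\int_0^1 f''\bigl(n_2 + t_1(n_4-n_3) + t_2(n_4 - n_1)\bigr)\,dt_1 dt_2.
\end{align*}
Under the hypothesis of \textup{(i)}, all frequencies are comparable to $n_{\max}$, so $|n_2| \sim n_{\max}$, and the perturbation $t_1(n_4-n_3) + t_2(n_4-n_1)$ has size $\ll |n_3| \sim n_{\max}$. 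Thus the argument of $f''$ stays of size $\sim n_{\max}$, giving $|f''(\cdot)| \lesssim n_{\max}^{2s-2}$ uniformly, and \eqref{Psis1} follows.

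For part \textup{(ii)}, the plan is to exploit the pairing suggested by $|n_4| \sim |n_1|$ together with the identity $n_3 - n_2 = n_4 - n_1$ (from the constraint). Write
\begin{align*}
\Psi_{s}(\cj{n}) = \bigl(\jb{n_1}^{2s} - \jb{n_4}^{2s}\bigr) + \bigl(\jb{n_3}^{2s} - \jb{n_2}^{2s}\bigr).
\end{align*}
The single-variable mean value theorem applied to $f$ bounds the first pair by $|n_1 - n_4|\, n_{\max}^{2s-1}$, using $s \geq \tfrac 12$ (so $2s-1 \geq 0$) to estimate $|f'|$ along the segment by $n_{\max}^{2s-1}$. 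For the second pair, the same mean value argument gives $|\jb{n_3}^{2s} - \jb{n_2}^{2s}| \lesssim |n_3 - n_2|\max(\jb{n_2}, \jb{n_3})^{2s-1}$, and once again $s\geq \tfrac 12$ allows us to replace the derivative factor by $n_{\max}^{2s-1}$, while $|n_3 - n_2| = |n_4 - n_1|$. Summing the two contributions gives \eqref{Psis2}.

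\textbf{Main obstacle.} The calculations themselves are short; the substantive point is to identify the correct DMVT parameters in \textup{(i)} so that the ``base point'' $\xi$ is exactly $n_2$ (which is of size $n_{\max}$) and the ``perturbations'' $\eta,\lambda$ are precisely the two small quantities $n_4 - n_1$ and $n_4 - n_3$ appearing on the right-hand side of \eqref{Psis1}. Verifying that the argument of $f''$ stays comparable to $n_{\max}$ uses in a crucial way the smallness hypothesis $\max(|n_4 - n_1|, |n_4-n_3|) \ll |n_3|$. In \textup{(ii)}, the restriction $s \geq \tfrac 12$ is precisely what is needed to guarantee $\max(\jb{n_2}, \jb{n_3})^{2s-1} \leq n_{\max}^{2s-1}$; for $s < \tfrac 12$ this step would fail when $\min(|n_2|,|n_3|)$ is much smaller than $n_{\max}$, and the lemma in this regime would require a separate argument.
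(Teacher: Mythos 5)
Your proof is correct and follows essentially the same route as the paper: the double mean value theorem (Lemma~\ref{DMVT}) for part (i), using the smallness hypothesis to keep the argument of $f''$ comparable to $n_{\max}$, and the ordinary mean value theorem applied to the pairs $(n_1,n_4)$ and $(n_2,n_3)$ together with $n_3-n_2=n_4-n_1$ for part (ii). The only (immaterial) difference is that you take the DMVT base point to be $n_2$ where the paper uses $n_3$; since $|n_2|\sim|n_3|$ under the hypothesis of (i), both choices work identically.
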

\begin{proof}
The second estimate \eqref{Psis2} follows from the mean value theorem. As for the first estimate \eqref{Psis1}, Lemma~\ref{DMVT} implies 
\begin{align*}
|\Psi_{s}(\cj{n}) &|\les |n_4-n_1||n_4-n_3| \sup_{t_1,t_2\in [0,1]} \big( \jb{n_3 + t_1(n_4-n_1)+t_2(n_4-n_3)}^{2s-2}\big) \\
& \les |n_4-n_1||n_4-n_3|\jb{n_3}^{2s-2},
\end{align*}
	
\noi
thanks to the condition $\max(|n_4-n_1|,|n_4-n_3|)\ll |n_3|$. Hence, we obtain the desired result.
\end{proof}

We now state the following estimate related to the phase function \eqref{p1}.
\begin{lemma}[\cite{demirbas2013existence,FT}]\label{lemma: phase lower bound}
	Fix $\alpha > \tfrac{1}{2}$ and let $n_1,n_2,n_3,n_4\in \Z$ be such that $n_4=n_1-n_2+n_3$. Then, we have
	\begin{align*}
	\left|\Phi_{\al}(\overline{n})\right|&\gtrsim |n_4-n_1||n_4-n_3|\left(  |n_4-n_1|+|n_3-n_1|+|n_4|\right)^{2\alpha-2} \\
	& \gtrsim |n_4-n_1||n_4-n_3|n_{\textup{max}}^{2\alpha-2}
	\end{align*}
	where the implicit constant depends only on $\alpha$.
	In particular, when $\{n_1,n_3\}\neq \{n_4,n_2\}$, the phase function $\Phi_{\al}$ satisfies the following size estimates:
	
\noi
\textup{(i)} If $|n_4|\sim |n_1|\sim |n_2|\sim |n_3|$, then 
		\begin{align*}
		|\Phi_{\al}(\cj{n})| \ges |n_4-n_1||n_4-n_3| n_{\textup{max}}^{2\al-2}. 
		\end{align*}

\noi
\textup{(ii)} If $|n_4|\sim |n_1|\gg \min(|n_2|,|n_3|)$, then 
		\begin{align*}
		|\Phi_{\al}(\cj{n})|\ges |n_4-n_1|n_{\textup{max}}^{2\al-1}. 
		\end{align*}

\noi
\textup{(ii)} If $|n_1|\sim |n_3|\gg \max(|n_4|,|n_2|)$ or $|n_4|\sim |n_2|\gg \max(|n_1|,|n_3|)$, then 
		\begin{align*}
		|\Phi_{\al}(\cj{n})|\ges n_{\textup{max}}^{2\al}. 
		\end{align*}
\end{lemma}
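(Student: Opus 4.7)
My plan is to apply the double mean value theorem (Lemma \ref{DMVT}) to the function $F(y) = |y|^{2\al}$. Using the resonance $n_4 = n_1 - n_2 + n_3$, I would set $a := n_4 - n_1$ and $b := n_4 - n_3$, so that $n_1 = n_4 - a$, $n_2 = n_4 - a - b$, $n_3 = n_4 - b$, and $|n_3 - n_1| = |a - b|$. Applying Lemma \ref{DMVT} with $\xi = n_4$, $\eta = -a$, $\lambda = -b$ then yields the integral representation
\begin{align*}
\Phi_\al(\overline n) = -2\al(2\al - 1)\, ab \int_0^1 \int_0^1 |n_4 - t_1 b - t_2 a|^{2\al - 2}\, dt_1\, dt_2,
\end{align*}
the case $\al = 1$ being recovered either from the identity $\Phi_1(\overline n) = -2ab$ or by reading off $2\al - 2 = 0$ above. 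A minor technical point: for $\tfrac{1}{2} < \al < 1$, $F$ is not $C^2$ at the origin, but $F''(y) = 2\al(2\al-1)|y|^{2\al-2}$ is locally integrable (precisely because $\al > \tfrac{1}{2}$), so the DMVT identity remains valid in the absolutely continuous sense, obtained by approximating $F$ with smooth functions and passing to the limit.

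The heart of the argument is to show
\begin{align*}
\int_0^1 \int_0^1 |n_4 - t_1 b - t_2 a|^{2\al - 2}\, dt_1\, dt_2 \gtrsim M^{2\al - 2}, \qquad M := |a| + |b - a| + |n_4|.
\end{align*}
First I would verify the equivalence $M \sim n_{\textup{max}}$ by the triangle inequality in both directions, using the formulae for $n_1, n_2, n_3$ in terms of $n_4, a, b$. Next, I would exploit that the affine map $(t_1, t_2) \mapsto n_4 - t_1 b - t_2 a$ sends the four corners of $[0,1]^2$ onto $\{n_1, n_2, n_3, n_4\}$. When $\tfrac{1}{2} < \al < 1$, the exponent $2\al - 2$ is negative and the affine map stays within the convex hull of the $n_j$'s, so pointwise $|n_4 - t_1 b - t_2 a| \le n_{\textup{max}}$, immediately giving the integrand lower bound $n_{\textup{max}}^{2\al - 2}$. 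When $\al \ge 1$, the exponent is nonnegative; here the constraints $|n_4 - a| = |n_1| \le n_{\textup{max}}$ and $|n_4 - b| = |n_3| \le n_{\textup{max}}$ force $|a|, |b| \lesssim n_{\textup{max}}$, and around the corner realising the maximum I can carve out a subset of $[0,1]^2$ of measure $\gtrsim 1$ on which $|n_4 - t_1 b - t_2 a| \gtrsim n_{\textup{max}}$.

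The main obstacle is exactly this geometric step in the regime $\al \ge 1$ combined with the low regularity subtlety at $\al < 1$; both cases turn on the integrability threshold $2\al - 2 > -1$, i.e.\ on the hypothesis $\al > \tfrac{1}{2}$. Once the bound $|\Phi_\al(\overline n)| \gtrsim |a| |b| n_{\textup{max}}^{2\al - 2}$ is established, the three size consequences follow by substitution. Case (i) is the general estimate itself. In case (ii), $|n_4| \sim |n_1| \gg \min(|n_2|, |n_3|)$ forces (say) $|n_4 - n_3| \sim n_{\textup{max}}$, absorbing one factor of $n_{\textup{max}}$ and producing $n_{\textup{max}}^{2\al - 1}$; the subcase $|n_2| \ll n_{\textup{max}}$ is symmetric. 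In case (iii), both $|n_4 - n_1|$ and $|n_4 - n_3|$ are of order $n_{\textup{max}}$, yielding $|\Phi_\al(\overline n)| \gtrsim n_{\textup{max}}^{2\al}$, completing the plan.
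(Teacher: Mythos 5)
The paper does not prove this lemma---it is imported by citation from \cite{demirbas2013existence, FT}---so there is no internal proof to compare against; your argument is, however, correct and is essentially the one used in those references: represent the second difference $\Phi_{\al}(\cj n)$ via the double mean value theorem applied to $|y|^{2\al}$, note that $F''(y)=2\al(2\al-1)|y|^{2\al-2}$ is locally integrable precisely when $\al>\tfrac12$, and lower-bound the resulting integral by $n_{\textup{max}}^{2\al-2}$ (pointwise via the convex-hull observation when $\tfrac12<\al<1$, and on a corner neighbourhood of measure $\gtrsim 1$ when $\al\geq 1$). The deduction of the three size estimates, including the use of $n_4-n_3=n_1-n_2$ in the subcase $|n_2|\ll n_{\textup{max}}$ of (ii), is also sound.
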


Combining Lemma~\ref{LEM:Psis} and Lemma~\ref{lemma: phase lower bound}, we obtain an estimate on the ratio in \eqref{ratio1}. This result is fundamental in our analysis; see the proof of Lemma \ref{LEM:energy} and Lemma~\ref{LEM:energy2}.

\begin{lemma} \label{LEM:mult}
	Let $\al >\frac{1}{2}$ and $s\geq \frac{1}{2}$. Then, for any $n_1,n_2,n_3,n_4\in \Z$ satisfying $n_4=n_1-n_2+n_3$ with $\{n_1,n_3\}\neq \{n_4,n_2\}$, we have
	\begin{align}
	\frac{ | \Psi_{s}(\cj{n})|}{|\Phi_{\al}(\cj{n})|} \les n_{\textup{max}}^{2s-2\al}. \label{ratio}
	\end{align}
\end{lemma}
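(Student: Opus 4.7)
The plan is to separate the analysis into two regimes according to the size of $\max(|n_4-n_1|, |n_4-n_3|)$ relative to $n_{\max}$. In both regimes, the general lower bound $|\Phi_\al(\cj n)| \gtrsim |n_4-n_1||n_4-n_3| n_{\max}^{2\al-2}$ from Lemma~\ref{lemma: phase lower bound} controls the denominator, while the numerator $|\Psi_s(\cj n)|$ is handled by either a single-pair mean value theorem (Case~1) or the refined double-difference estimate in Lemma~\ref{LEM:Psis}(i) (Case~2). The hypothesis $\{n_1,n_3\}\neq \{n_4,n_2\}$ ensures both $|n_4-n_1|$ and $|n_4-n_3|$ are nonzero, hence at least $1$.

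\textbf{Case 1} ($\max(|n_4-n_1|, |n_4-n_3|) \gtrsim n_{\max}$): Using the identities $n_3-n_2 = n_4-n_1$ and $n_1-n_2 = n_4-n_3$, one can pair the four terms in $\Psi_s(\cj n)$ in two different ways. Applying the mean value theorem to $x\mapsto \jb{x}^{2s}$ (whose derivative is bounded by $O(n_{\max}^{2s-1})$ for $s\geq \tfrac{1}{2}$) yields $|\Psi_s(\cj n)| \lesssim \min(|n_4-n_1|, |n_4-n_3|)\cdot n_{\max}^{2s-1}$. Combining with the lower bound on $\Phi_\al$ produces a ratio of $\tfrac{1}{\max(|n_4-n_1|,|n_4-n_3|)}\, n_{\max}^{2s-2\al+1}$, and the case hypothesis delivers the required $n_{\max}^{2s-2\al}$.

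\textbf{Case 2} ($\max(|n_4-n_1|, |n_4-n_3|) \ll n_{\max}$): A short argument shows that all four frequencies must then be comparable to $n_{\max}$: whichever index attains $n_{\max}$, say $|n_4|\sim n_{\max}$ (the other cases follow by the symmetries $n_1\leftrightarrow n_3$ and $n_2\leftrightarrow n_4$), the smallness of the two differences forces $|n_1|, |n_3| \sim n_{\max}$, and then $n_2 = n_1+n_3-n_4$ gives $|n_2| \sim n_{\max}$ as well. Hence the hypotheses of Lemma~\ref{LEM:Psis}(i) are satisfied, yielding $|\Psi_s(\cj n)| \lesssim |n_4-n_1||n_4-n_3|\, n_{\max}^{2s-2}$, which cancels exactly the corresponding factor in the lower bound of $|\Phi_\al|$ to produce $n_{\max}^{2s-2\al}$. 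The main delicacy is that for $\tfrac{1}{2}\leq s <1$, the naive double mean value theorem (Lemma~\ref{Lemma: DMVT application}) is unavailable because $x\mapsto \jb{x}^{2s-2}$ fails to be monotone, so the refined estimate Lemma~\ref{LEM:Psis}(i) is essential; this is precisely what extends the multiplier bound past the prior $s\geq 1$ framework.
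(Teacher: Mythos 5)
Your proof is correct and takes essentially the same route as the paper's: both rest on the general lower bound $|\Phi_{\al}(\cj{n})|\ges |n_4-n_1||n_4-n_3|n_{\max}^{2\al-2}$ from Lemma~\ref{lemma: phase lower bound}, the double-mean-value estimate \eqref{Psis1} of Lemma~\ref{LEM:Psis} when all frequencies are comparable with small differences, and the single mean value theorem bound $|\Psi_s(\cj{n})|\les \min(|n_4-n_1|,|n_4-n_3|)\,n_{\max}^{2s-1}$ in the remaining regime. Your up-front dichotomy on $\max(|n_4-n_1|,|n_4-n_3|)$ versus $n_{\max}$ is a marginally cleaner packaging of the paper's case analysis, since it absorbs the non-comparable frequency configurations into the mean-value-theorem case.
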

\begin{proof}
	Apart from the subcase where $|n_1|\sim |n_2|\sim|n_3|\sim |n_4|$ and $\text{max}(|n_4-n_1|,|n_4-n_3|)\ges |n_3|$, \eqref{ratio} follows from Lemma~\ref{LEM:Psis} and Lemma~\ref{lemma: phase lower bound}. As for this remaining subcase, we note that the mean value theorem implies
	\begin{align*}
	|\Psi_{s}(\cj{n})|\les n_{\text{max}}^{2s-1}\min( |n_4-n_1|,|n_4-n_3|),
	\end{align*}
	and thus 
	\begin{align*}
	\frac{ | \Psi_{s}(\cj{n})|}{|\Phi_{\al}(\cj{n})|}  \les n_{\text{max}}^{2s-2\al+1}\frac{1}{\text{max}(|n_4-n_1|,|n_4-n_3|)}\les n_{\textup{max}}^{2s-2\al}.
	\end{align*}

\noi
This completes the proof of Lemma \ref{LEM:mult}.
\end{proof}

\subsection{Function spaces}\label{SUBSEC:spaces}

We use the Fourier restriction norm spaces $X^{s,b}$, which are adapted to the linear flow of \eqref{FNLS}.
More precisely, given $s,b\in \R$, we define the space $X^{s,b}(\R\times \T)$ via the norm 
\begin{align*}
\|v\|_{X^{s,b}(\R\times \T)}=\| \jb{n}^{s}\jb{\tau -|n|^{2\al} }^{b}\, \ft{v}(\tau, n)\|_{L^{2}_{\tau}\l^{2}_{n}(\R\times \Z)},
\end{align*}
where $\ft{v}(\tau, n)$ denotes the space-time Fourier transform of $v(t,x)$.
 Given $T>0$, we also define the local-in-time version $X^{s,b}([0,T]\times \T)$ of $X^{s,b}(\R\times \T)$ as 
\begin{align*}
X^{s,b}([0,T]\times \T)=\inf\{ \|v\|_{X^{s,b}(\R\times \T)} \, : \, v|_{[0,T]}=u \}.
\end{align*}
We will denote by $X^{s,b}$ and $X^{s,b}_{T}$ the spaces $X^{s,b}(\R\times \T)$ and $X^{s,b}([0,T]\times \T)$, respectively. 
We have the following embedding: for any $s\in \R$ and $b>\frac{1}{2}$, we have
\begin{align*}
X^{s,b}_T \hookrightarrow C([0,T];H^{s}(\T)). 
\end{align*}
Given any function $F$ on $[0, T]\times \T$, we denote by $\tilde{F}$ any extension of $F$ onto $\R\times \T$.

\begin{lemma}\label{LEM:sharp}
Let $s\geq 0$ and $0\leq b<\frac{1}{2}$. Then, for any compact interval $I$, we have 
\begin{align*}
\|\chi_{I}(t)f\|_{X^{s,b}}\les \|f\|_{X^{s,b}},
\end{align*}
where the implicit constant depends only on $b$.
\end{lemma}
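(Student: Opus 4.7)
The plan is to reduce the $X^{s,b}$ bound to a purely one-dimensional multiplier estimate in the time variable on $H^b(\R)$, and then invoke a classical Sobolev multiplier fact for sharp cutoffs when $b<\tfrac12$.

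First I would pass to Fourier coefficients in $x$: writing $f(t,x)=\sum_n g_n(t)e^{inx}$ with $g_n(t)=\widehat f(t,n)$, and setting $h_n(t):=e^{-it|n|^{2\al}}g_n(t)$, the change of variable $\tau\mapsto \tau+|n|^{2\al}$ in the definition of the $X^{s,b}$-norm gives the identity
\[
\|f\|_{X^{s,b}}^2=\sum_{n\in\Z}\jb{n}^{2s}\|h_n\|_{H^b(\R)}^2.
\]
Since $\chi_I(t)$ commutes with the modulation $e^{-it|n|^{2\al}}$ (it is independent of $n$), the same identity applied to $\chi_I f$ yields
\[
\|\chi_I f\|_{X^{s,b}}^2=\sum_{n\in\Z}\jb{n}^{2s}\|\chi_I h_n\|_{H^b(\R)}^2.
\]
Thus the lemma reduces to the one-dimensional statement: for every compact interval $I\subset\R$ and every $h\in H^b(\R)$ with $0\le b<\tfrac12$,
\[
\|\chi_I h\|_{H^b(\R)}\le C(b)\|h\|_{H^b(\R)},
\]
with $C(b)$ depending only on $b$ (in particular, not on $I$).

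Second, I would prove this multiplier bound. The case $b=0$ is immediate from $|\chi_I|\le 1$. For $0<b<\tfrac12$, I would use the Gagliardo–Slobodetskij characterization
\[
\|h\|_{H^b(\R)}^2\sim \|h\|_{L^2(\R)}^2+\iint_{\R^2}\frac{|h(x)-h(y)|^2}{|x-y|^{1+2b}}\,dx\,dy,
\]
and expand $|\chi_I(x)h(x)-\chi_I(y)h(y)|^2\le 2\chi_I(x)^2|h(x)-h(y)|^2+2|h(y)|^2|\chi_I(x)-\chi_I(y)|^2$. The first term is pointwise bounded by the integrand for $h$ itself, and so contributes at most $\|h\|_{\dot H^b}^2$. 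The second term, after integrating in $x$ explicitly using that $|\chi_I(x)-\chi_I(y)|=1$ precisely when exactly one of $x,y$ lies in $I$, reduces to a weighted estimate of the Hardy type
\[
\int_\R \frac{|h(y)|^2}{\mathrm{dist}(y,\partial I)^{2b}}\,dy\les \|h\|_{H^b(\R)}^2,
\]
which holds for $0\le b<\tfrac12$ with a constant independent of $I$ by translation invariance and scaling. Summing the resulting estimate over $n$ with weight $\jb{n}^{2s}$ and invoking the identity in Step~1 completes the proof.

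The main obstacle is the last Hardy-type inequality, which is the sole step where the strict inequality $b<\tfrac12$ is genuinely used and where one must verify that the constant does not inherit any dependence on the endpoints of $I$. This is classical, but it is the real analytic content of the lemma; once it is in place, everything else is bookkeeping with Plancherel and the modulation structure of $X^{s,b}$.
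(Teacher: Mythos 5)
Your argument is correct. The paper does not prove Lemma~\ref{LEM:sharp} itself but defers to \cite{DBD}, where (as in most of the Bourgain-space literature) the estimate is obtained on the Fourier side: after the same reduction to the one-dimensional bound $\|\chi_I h\|_{H^b(\R)}\les\|h\|_{H^b(\R)}$ via conjugation by the modulation $e^{-it|n|^{2\al}}$, one uses the decay $|\ft{\chi_I}(\tau)|\les\min(|I|,\jb{\tau}^{-1})$ and a Schur/Young-type estimate on the convolution $\jb{\tau}^{b}(\ft{\chi_I}\ast\ft h)$, the restriction $b<\tfrac12$ entering through the failure of $\jb{\tau}^{-1}$ to be square-integrable against $\jb{\tau}^{2b}$ at the endpoint. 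Your route is genuinely different in the second step: you stay in physical space, using the Gagliardo--Slobodetskij seminorm and reducing matters to the fractional Hardy inequality $\int_\R|h(y)|^2\,\mathrm{dist}(y,\partial I)^{-2b}\,dy\les\|h\|_{H^b(\R)}^2$, which indeed follows from the classical point-singularity Hardy inequality for $0<b<\tfrac12$ applied at each endpoint of $I$ (and is translation- and scale-invariant, so the constant is independent of $I$). Both proofs are complete; yours has the advantage of isolating exactly where $b<\tfrac12$ is used (the Hardy/trace threshold) and of making the uniformity in $I$ manifest, at the cost of invoking the equivalence of the Fourier and Gagliardo definitions of $H^b$, whose constants degenerate as $b\to0^+$ (harmless here, since the lemma permits the constant to depend on $b$ and the case $b=0$ is trivial).
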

\noi
For a proof of Lemma \ref{LEM:sharp}, see for example \cite{DBD}.

\subsection{Linear and bilinear Strichartz estimates}
In this subsection, we present some linear $L_{t,x}^4$-Strichartz estimates on $\T$ as well as a bilinear estimate. We begin with the $L_{t,x}^4$-Strichartz estimate. 

\begin{lemma}\label{LEM:SuperL4}
For $\al\geq 1$ and $b=\frac{1}{4}\big( 1+\frac{1}{2\al}\big)$,
we have 
\begin{align}
\|u\|_{L^{4}_{t,x}(\R\times \T)}& \les \| u\|_{X^{0,b}(\R \times \T)}.\label{L4strich2}
\end{align}
\end{lemma}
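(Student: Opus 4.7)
The plan is to prove this $L^{4}_{t,x}$-Strichartz estimate via the standard counting-based approach in Bourgain's $X^{s,b}$ framework on the torus. Using $\|u\|_{L^4_{t,x}}^2 = \|u\overline{u}\|_{L^2_{t,x}}$ and Plancherel in both variables, the task reduces to bounding
\begin{equation*}
\sum_{m \in \Z} \int_\R |\widehat{u\overline{u}}(\tau, m)|^2\, d\tau \lesssim \|u\|_{X^{0,b}}^4.
\end{equation*}
Writing $\widehat{u}(\sigma, n) = \jb{\sigma - |n|^{2\al}}^{-b} G(\sigma, n)$ with $\|G\|_{L^2_\sigma \ell^2_n} = \|u\|_{X^{0,b}}$, a direct computation yields
\begin{equation*}
\widehat{u\overline{u}}(\tau, m) = \sum_{n_1 - n_2 = m} \int \frac{G(\sigma, n_1)\, \overline{G(\sigma - \tau, n_2)}}{\jb{\sigma - |n_1|^{2\al}}^b\, \jb{\sigma - \tau - |n_2|^{2\al}}^b}\, d\sigma.
\end{equation*}

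I would first handle the resonant contribution $m = 0$ (i.e.\ $n_1 = n_2$) separately. Here the phase $|n_1|^{2\al} - |n_2|^{2\al}$ vanishes, and Parseval in the spatial variable together with Plancherel in time gives $\int_\R |\widehat{u\overline{u}}(\tau, 0)|^2 d\tau = \int_\R \|u(t)\|_{L^2_x}^4 dt = \|u\|_{L^4_t L^2_x}^4$. This is controlled by $\|u\|_{X^{0,b}}^4$ via the Banach-valued Sobolev embedding $H^{1/4+}_t(\R; L^2_x) \hookrightarrow L^4_t(\R; L^2_x)$ after conjugation by the free evolution $e^{-it(-\partial_x^2)^\al}$ (which is an $L^2_x$-isometry and converts the $H^b_t$-weight $\jb{\tau}^b$ into the Bourgain-space weight $\jb{\tau - |n|^{2\al}}^b$); this is legitimate since $b = \tfrac14 + \tfrac{1}{8\al} > \tfrac14$ for every $\al \ge 1$. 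For the non-resonant modes $m \neq 0$, applying Cauchy--Schwarz in $\sigma$ to separate the pair of modulation weights from the pair of $G$-factors, together with the standard computation of
\begin{equation*}
\int \frac{d\sigma}{\jb{\sigma - |n_1|^{2\al}}^{2b}\, \jb{\sigma - \tau - |n_2|^{2\al}}^{2b}} \lesssim \jb{\tau - (|n_1|^{2\al} - |n_2|^{2\al})}^{1-4b} = \jb{\tau - (|n_1|^{2\al} - |n_2|^{2\al})}^{-1/(2\al)},
\end{equation*}
which is valid in the range $1/4 < b < 1/2$ that our choice of $b$ occupies, reduces the problem to a counting estimate.

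The main obstacle is this counting step for $m \neq 0$. After a dyadic decomposition in the modulation size $L = \jb{\tau - (|n_1|^{2\al} - |n_1-m|^{2\al})}$ and a careful (weighted) Cauchy--Schwarz in $n_1$ within each shell, one must control, uniformly in $m \neq 0$ and $\tau \in \R$, the number of $n_1 \in \Z$ for which $|n_1|^{2\al} - |n_1 - m|^{2\al}$ lies in a prescribed interval of length $L$. The key input is the strict convexity of $|n|^{2\al}$ for $\al \geq 1$: the derivative of $n_1 \mapsto |n_1|^{2\al} - |n_1-m|^{2\al}$ is of size $\sim |m|\, \al(2\al-1)\, \jb{n_1}^{2\al-2}$ in the regime $|n_1| \gtrsim |m|$, so the relevant level sets are sparse (with the complementary low-frequency regime $|n_1| \ll |m|$ contributing only $O(1)$ values); in the endpoint $\al = 1$ the phase is linear in $n_1$ and one can invoke Bourgain's classical divisor-bound counting. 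The decay $1/(2\al)$ coming from the modulation integral is calibrated to balance precisely this counting gain, which is what selects the exponent $b = \tfrac14(1 + \tfrac{1}{2\al})$. Summing the dyadic shells in $L$, absorbing any logarithmic loss by a harmless infinitesimal increase in $b$ (or by an endpoint refinement), and applying Plancherel in $(\tau, n_1, n_2)$ to the remaining $G$-factors then closes the estimate by $\|G\|_{L^2_\sigma \ell^2_n}^4 = \|u\|_{X^{0,b}}^4$.
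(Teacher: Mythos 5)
Your reduction via $\|u\|_{L^4_{t,x}}^2=\|u\bar u\|_{L^2_{t,x}}$ sends the argument down a path that does not close. The counting estimate your proof hinges on --- that, uniformly in $m\neq 0$ and $\tau$, the number of $n_1\in\Z$ with $|n_1|^{2\al}-|n_1-m|^{2\al}$ in an interval of length $L$ is $\les L^{1/(2\al)}$ --- is false in the near-resonant regime where $|m|$ is small compared to $|n_1|$. Take $|m|=1$: then $\mu(n_1):=|n_1|^{2\al}-|n_1-1|^{2\al}\approx 2\al\,\sgn(n_1)|n_1|^{2\al-1}$, so the preimage of $[0,L]$ contains all $0\le n_1\les L^{1/(2\al-1)}$, and $L^{1/(2\al-1)}\gg L^{1/(2\al)}$ for large $L$ (for $\al=1$ the count is $\sim L$ against the needed $L^{1/2}$). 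Your derivative heuristic $\mu'(n_1)\sim|m|\jb{n_1}^{2\al-2}$ is correct, but it only yields the count $\les 1+L/|m|^{2\al-1}$, and the regime you dismiss as harmless ($|n_1|\ll|m|$) is not the dangerous one: the dangerous one is $|n_1|\gg|m|$ with $|m|$ small, i.e.\ the diagonal interactions $n_1\approx n_2$, where the \emph{difference} of the two dispersion symbols degenerates. Consequently the Cauchy--Schwarz factor $\sum_{n_1}\jb{\tau-\mu(n_1)}^{-1/(2\al)}$ cannot be bounded uniformly in $(\tau,m)$, and no redistribution of weights or dyadic shelling in $L$ repairs this, since even with $L$ capped by the larger modulation the level sets are too large. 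Your treatment of the resonant mode $m=0$ via $X^{0,\frac14+}\embeds L^4_tL^2_x$ is fine, but the obstruction is not confined to $m=0$.

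The fix --- and the route the paper takes --- is to write $\|u\|_{L^4_{t,x}}^2=\|u\cdot u\|_{L^2_{t,x}}$, so that the relevant phase is the \emph{sum} $\Psi_n(n_1)=|n_1|^{2\al}+|n-n_1|^{2\al}$. This function is uniformly convex along the constraint: after reducing to $n_1\ge n/2$, Taylor's formula with $\Psi_n''\ges |n_1|^{2\al-2}+|n-n_1|^{2\al-2}$ gives $\Psi_n(n_1)-\Psi_n(n_2)\ges|n_1-n_2|^{2\al}$, whence the level-set count $\les M^{1/(2\al)}$ holds uniformly in $n$ and $\tau$; this is exactly \eqref{Count}. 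The paper then runs the dyadic-modulation bilinear argument: decompose $u=\sum_M u_M$, bound $\|u_{2^mM}u_M\|_{L^2_{t,x}}$ by $M^{1/2}(2^mM)^{1/(4\al)}$ times the $L^2$ norms, and sum with the geometric gain $2^{-\ta m}$, $\ta=\frac14-\frac1{8\al}>0$, which is what actually selects $b=\frac14\big(1+\frac1{2\al}\big)$. You should rebuild your proof on the sum phase; as written, the key counting step is quantitatively wrong.
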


We note that the value of $b$ in Lemma~\ref{LEM:SuperL4} is sharp in the sense that the estimate \eqref{L4strich2} fails if $b<\tfrac{1}{4}\big( 1+\tfrac{1}{2\al}\big)$. To see this, one may easily adapt the counterexample~in~\cite[Footnote 9]{OTz}.
The estimate \eqref{L4strich2} is a generalization of the following $L_{t,x}^4$-Strichartz estimate due to Bourgain~\cite{Bou} for  $\al=1$: 
\begin{align*}
\| u\|_{L^{4}_{t,x}(\R\times \T)} \les \|u\|_{X^{0,\frac{3}{8}}(\R\times \T)}. 
\end{align*}
See also \cite{OTz} for a corresponding estimate when $\al=2$. The key difference between the proofs of these estimates and \eqref{L4strich2} is a lack of explicit factorisations due to the fractional powers $|n|^{2\al}$. For this, we need the following counting estimate.

\begin{lemma}
Let $\al\geq 1$, $n\in \Z$, $\tau \in \R$ and $M\geq 1$. Then, we have 
\begin{align}
\# \{ n_1 \in \Z :  | |n_1|^{2\al}+|n-n_1|^{2\al }- \tau |\le M   \} \les_{\al}M^{\frac{1}{2\al}},
\label{Count}
\end{align}
uniformly in $n\in \Z$ and $\tau\in \R$.
\end{lemma}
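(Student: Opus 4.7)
The plan is as follows. Fix $n \in \Z$ and $\tau \in \R$, and consider the function $f:\R\to \R$ defined by $f(y) = |y|^{2\al} + |n-y|^{2\al}$. Since $\al \ge \tfrac{1}{2}$, the map $y \mapsto |y|^{2\al}$ is convex, so $f$ is convex, symmetric about $y = n/2$, and attains its minimum there. Consequently the preimage $f^{-1}([\tau - M, \tau+M])$ is the union of at most two intervals symmetric about $n/2$. It therefore suffices (upon WLOG assuming $n\geq 0$ and using the symmetry) to bound the number of integers on one such interval $[a,b] \subseteq [n/2,\infty)$ by $\lesssim_\al M^{1/(2\al)}$, which in turn reduces to establishing the length bound $b - a \lesssim_\al M^{1/(2\al)}$ given the constraint $f(b) - f(a) \le 2M$.

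I will split according to where $[a,b]$ lies relative to $n$. Suppose first $[a,b] \subseteq [n,\infty)$, so $f(y) = y^{2\al} + (y-n)^{2\al}$ and $f'(y) \ge 2\al\, y^{2\al-1}$. The mean value theorem yields $b - a \le 2M/f'(\xi) \le M/(\al\, a^{2\al-1})$, which gives $b - a \lesssim M^{1/(2\al)}$ when $a \ge M^{1/(2\al)}$. In the complementary case $a < M^{1/(2\al)}$, combining $f(b) \ge b^{2\al}$, $f(a) \le 2a^{2\al} < 2M$, and $f(b) - f(a) \le 2M$ yields $b \le (4M)^{1/(2\al)}$, so $b - a \le b \lesssim M^{1/(2\al)}$.

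The remaining regime $[a,b] \subseteq [n/2,n]$ exploits the second-order behavior of $f$ near its minimum. For $\al \ge 1$ and $t \in [n/2,n]$, one computes $f''(t) = 2\al(2\al-1)\big[t^{2\al-2} + (n-t)^{2\al-2}\big] \ge 2\al(2\al-1)(n/2)^{2\al-2}$, and since $f'(n/2) = 0$, integration gives
\[
f(b) - f(a) \;\ge\; \al(2\al-1)(n/2)^{2\al-2}\,\big[(b-n/2)^2 - (a-n/2)^2\big].
\]
Setting $A = a-n/2$, $B = b-n/2$ with $0 \le A \le B$, the elementary estimate $(B-A)^2 \le (B-A)(B+A) = B^2 - A^2$ combined with $f(b)-f(a)\le 2M$ produces $b - a \lesssim_\al M^{1/2}(n/2)^{1-\al}$. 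If $n \le 2M^{1/(2\al)}$, then trivially $b - a \le n/2 \le M^{1/(2\al)}$; otherwise, using $\al \ge 1$ and $n/2 > M^{1/(2\al)}$, we have $(n/2)^{1-\al} \le M^{(1-\al)/(2\al)}$, yielding $b - a \lesssim_\al M^{1/2}\cdot M^{(1-\al)/(2\al)} = M^{1/(2\al)}$.

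The main obstacle is the intermediate regime $[n/2,n]$, where $f'$ degenerates at $n/2$; the key point is that the coefficient $(n/2)^{2\al-2}$ coming from $f''$ is precisely calibrated (under the hypothesis $\al \ge 1$) against the threshold $n \sim M^{1/(2\al)}$ so that the resulting length estimate is uniform in $n$ and $\tau$. Summing the at most two intervals and the $O(1)$ integers at their endpoints gives the claimed bound $\lesssim_\al M^{1/(2\al)}$.
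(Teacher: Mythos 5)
Your proof is correct, and it rests on the same key mechanism as the paper's: reduce by the symmetry $f(y)=f(n-y)$ to the half-line $[n/2,\infty)$, where $f$ is increasing, and then exploit the second-derivative lower bound coming from convexity of $|y|^{2\al}$ (the hypothesis $\al\ge 1$ guaranteeing $f\in C^{2}$ and $f''(y)\ges_{\al}|y|^{2\al-2}$). The difference is in the execution. The paper dispenses with your three-way case analysis ($[a,b]\subseteq[n,\infty)$ with $a$ above or below $M^{1/(2\al)}$, and $[a,b]\subseteq[n/2,n]$ with $n$ above or below $2M^{1/(2\al)}$) by proving the single uniform estimate
\[
f(n_1)-f(n_2)\ \ges_{\al}\ |n_1-n_2|^{2\al}\qquad\text{for all } n_1>n_2\ge n/2,
\]
via Taylor's formula with integral remainder: since $f'(n_2)\ge 0$ and $f''\big(n_2+t(n_1-n_2)\big)\ges_{\al}\big(t(n_1-n_2)\big)^{2\al-2}$ (using only $n_2\ge 0$), the remainder integral already produces $|n_1-n_2|^{2\al}$. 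This confines the symmetrized set at once to an interval of length $\les M^{1/(2\al)}$ and makes the thresholds $a\sim M^{1/(2\al)}$ and $n\sim M^{1/(2\al)}$ in your argument unnecessary; your route is more elementary but longer. One cosmetic point: your two cases $[a,b]\subseteq[n,\infty)$ and $[a,b]\subseteq[n/2,n]$ do not literally cover an interval straddling $n$; since $f$ is increasing on $[n/2,\infty)$, simply split such an interval at $n$ and add the two bounds, which costs nothing.
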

\begin{proof}
Let $\Psi_{n}(n_1):=|n_1|^{2\al}+|n-n_1|^{2\al}$ and $A(n,\tau ,M)$ be the set in \eqref{Count}.
We claim that it suffices to consider $n>0$. First, if $n=0$, we can argue directly as follows. We may assume $\tau\geq 2 M$, since otherwise we have $|n_1| \les M^{\frac{1}{2\al}}$. Then, $|n_1|\in [ 2^{-\frac{1}{2\al}}(\tau -M)^{\frac{1}{2\al}},  2^{-\frac{1}{2\al}}(\tau +M)^{\frac{1}{2\al}}]$ and hence by the mean value theorem,
\begin{align*}
\# A(0,\tau, M) \leq C(\al)\big[ (\tau+M)^{\frac{1}{2\al}}-(\tau-M)^{\frac{1}{2\al}}\big] \les M^{{\frac{1}{2\al}}}.
\end{align*}
In view of the property $\Psi_{-n}(n_1)=\Psi_{n}(-n_1)$, we may assume $n_1>0$.
We claim that we may also assume $n_1 \geq \frac{n}{2}$. Indeed, if $n_1\in A(n,\tau,M)$, then since $\Psi_{n}(n-n_1)=\Psi_{n}(n_1)$, $n-n_1\in A(n,\tau,M)$ with $n-n_1>\frac{n}{2}$.
Now suppose there are at least two elements in $n_1,n_2\in A(n,\tau, M)$ and without loss of generality, we assume $n_1>n_2$. Since $\al\geq 1$, the function $x\in \R\mapsto \Psi_{n}(x)$ belongs to $C^{2}(\R)$ and we have
\begin{align*}
\Psi_{n}'(n_1)&=2\al |n_1|^{2\al-2}n_1-2\al|n-n_1|^{2\al-2}(n-n_1),\\
\Psi_{n}''(n_1)&=2\al (2\al-1)|n_1|^{2\al -2}+2\al (2\al-1)|n-n_1|^{2\al -2}.
\end{align*}
Since $n_1>\frac{n}{2}$, $\Psi_{n}'(n_1)\geq 0$ and hence by Taylor's formula, we have 
\begin{align*}
\Psi_{n}(n_1)-\Psi_{n}(n_2)& = \Psi_{n}'(n_2)(n_1-n_2)+ \frac 12  \int_0^1 (1-t)\Psi_{n}''(tn_1+(1-t)n_2) (n_1-n_2)^2 dt \\
& \ges_{\al}   \int_0^1 (1-t) \big| n_2+t(n_1-n_2) \big|^{2\al-2} (n_1-n_2)^2 dt \\
&\ges_{\al}|n_1-n_2|^{2\al}.
\end{align*}
Now, for $n_1,n_2\in A(n,\tau, M)$, we have $\Psi_{n}(n_1),\Psi_{n}(n_2)\in [\tau -M, \tau +M]$ and hence
\begin{align*}
|n_2-n_1|\les M^{\frac{1}{2\al}}.
\end{align*}
This completes the proof of \eqref{Count}.
\end{proof}

\begin{proof}[Proof of Lemma~\ref{LEM:SuperL4}]
We follow the standard argument as found in \cite{TAO}. 
For each dyadic number $M$, we define
\begin{align*}
\ft u_M(\tau,n):=\chi_{M\le \jb{\tau -|n|^{2\al}}<2M}  \ft u(\tau, n).
\end{align*}

\noi
Then, by symmetry, we have 
\begin{align*}
\|u\|_{L^4_{t,x}}^2 &=\|uu\|_{L^{2}_{t,x}} \les \sum_{m\ge 0} \sum_{M \ge 0}\| u_{2^mM} u_M \|_{L^2_{t,x}}.
\end{align*}

\noi
Hence, if we have
\begin{align}
\| u_{2^mM} u_M \|_{L^2_{t,x}} &\les (2^m M)^{\frac 1{4\al}} M^{\frac 12}\| u_{2^mM} \|_{L^2_{t,x}} \| u_M \|_{L^2_{t,x}} \notag \\
&= 2^{-\ta m} (2^mM)^{\frac 14(1+\frac{1}{2\al})} M^{\frac 14(1+\frac{1}{2\al})}  \| u_{2^mM}\|_{L^2_{t,x}} \| u_M \|_{L^2_{t,x}}
\label{bili}
\end{align}

\noi
with $\ta:= (\frac 14-\frac 1{8\al})> 0$, then, from the Cauchy-Schwarz inequality, we obtain \eqref{L4strich2}. Therefore, it suffices to show \eqref{bili}. From Plancherel's theorem and the Cauchy-Schwarz inequality, we have
\begin{align}
\| u_{2^mM }u_M \|_{L^2_{t,x}}
&\les  M^{\frac 12} \sup_{(\tau,n)\in \R\times \Z}A(\tau,n, 2^{m}M)^{\frac 12} \| u_{2^mM}  \|_{L^2_{t,x}} \| u_M \|_{L^2_{t,x}}.
\label{bi}
\end{align}  
 It follows from \eqref{Count} that we have
\begin{align}
A(\tau,n, 2^{m}M) \les (2^mM)^\frac 1{2\al}.
\label{mea}
\end{align}

\noi
Thus, from \eqref{bi} and \eqref{mea}, we obtain \eqref{bili}, which completes the proof of Lemma \ref{LEM:SuperL4}.
\end{proof}



Note that by interpolation between 
\begin{align}
X^{0,\frac{1}{2}+} \embeds L^{\infty}_{t}L^{2}_{x}, \label{CTembed}
\end{align}
and $X^{0,0}=L^{2}_{t,x}$,
we have 
\begin{align}
X^{0, \big( \frac{1}{2}-\frac{1}{p}\big)+}\embeds L^{p}_{t}L^{2}_{x}, \label{Xembed1}
\end{align}
for any $2< p <\infty$. Similarly, by interpolation between \eqref{L4strich2} and \eqref{CTembed}, we have 
\begin{align*}
X^{0, \frac {1-4s}4 \big(1+\frac 1{2\al}\big)+2s+ }\embeds L^{4+\frac{16s}{1-4s}}_{t}L^{4-\frac{16s}{1+4s}}_{x},
\end{align*}
for any $0<s<\frac{1}{4}$. Then, the Sobolev embedding $ W^{s,4-\frac{16s}{1+4s}} \embeds L^4$ implies \begin{align}
X^{s, \frac {1-4s}4 \big(1+\frac 1{2\al}\big)+2s+}\embeds L^{4+\frac{16s}{1-4s}}_{t}L^{4}_{x}. \label{Xembed2}
\end{align}
By Bernstein's inequality, we have 
\begin{align}
\|\P_{N}f\|_{L^{\infty}_{t,x}} \les N^{\frac{1}{2}}\|\P_{N}f\|_{X^{0,\frac{1}{2}+}}. \label{Linfty}
\end{align}

\noi

For the weakly dispersive case of \eqref{FNLS} ($\frac 12<\al<1$), the $L^4_{t,x}$-Strichartz estimate looses derivatives due to the weaker curvature of the phase function \eqref{p1}. 

\begin{lemma}{\cite[Corollary 2.11]{ST1}}\label{LEM:StrichWD}
Given $\frac{1}{2}<\al\leq 1$ and $N\gg M$ dyadic, we have
\begin{align}
\|\P_N f\|_{L^{4}_{t,x}}& \les N^{\frac{1-\al}{4}}\|\P_N f\|_{X^{0,\frac{3}{8}}}, \label{L4strich}\\
\|\P_N f \cdot \P_M g\|_{L^2_{t,x}}& \les M^{\frac{1-\al}{2}}\|\P_N f\|_{X^{0,\frac{3}{8}}}\|\P_M g\|_{X^{0,\frac{3}{8}}}, \label{bilinstrich1} 
\end{align}
\end{lemma}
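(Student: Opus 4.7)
The plan is to reduce both estimates to a Bourgain-type counting estimate for the phase function $\Psi_n(n_1):=|n_1|^{2\al}+|n-n_1|^{2\al}$, with the weaker dispersion $\al<1$ manifesting through a weaker second-derivative bound $\Psi_n''(n_1)\ges |n_1|^{2\al-2}+|n-n_1|^{2\al-2}$. This weakness is what ultimately produces the lossy factors $N^{(1-\al)/4}$ and $M^{(1-\al)/2}$ in the two estimates, and any loss would become catastrophic as $\al \to \tfrac{1}{2}^+$, consistent with the degeneration to the non-dispersive half-wave equation.

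For the bilinear estimate \eqref{bilinstrich1}, I would first use Plancherel's theorem to rewrite $\|\P_N f \cdot \P_M g\|_{L^2_{t,x}}^2$ as an $L^2_{\tau, n}$ norm of the space-time Fourier convolution. A Cauchy--Schwarz argument in $(\tau_1, n_1)$, pulling out the $X^{0,3/8}$-weights, then combined with the $\tau_1$-integral bound $\int \jb{\tau_1-a}^{-3/4}\jb{\tau-\tau_1-b}^{-3/4}\,d\tau_1 \les \jb{a-b}^{-1/2}$, reduces matters to
\begin{align*}
\sup_{(\tau,n)} \sum_{\substack{|n_1|\sim N \\ |n-n_1|\sim M}} \jb{\tau-\Psi_n(n_1)}^{-1/2} \les M^{1-\al}.
\end{align*}
Since $N\gg M$, the derivative $|\Psi_n'(n_1)|\sim N^{2\al-1}$ dominates, the total number of admissible $n_1$ is $\les M$, and a dyadic decomposition in $\jb{\tau-\Psi_n(n_1)}$ balanced against these two bounds gives a sharp estimate $\les 1+M^{1/2}N^{-(2\al-1)/2}$, which is $\les M^{1-\al}$ precisely because $N\geq M$ and $\al>\tfrac{1}{2}$. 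Taking the square root delivers the desired exponent $(1-\al)/2$.

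For the linear estimate \eqref{L4strich}, I would follow the strategy of Lemma~\ref{LEM:SuperL4}, writing $\|\P_N f\|_{L^4_{t,x}}^2=\|(\P_N f)^2\|_{L^2_{t,x}}$ and performing a dyadic modulation decomposition $\P_N f = \sum_M (\P_N f)_M$. For each bilinear piece with modulation parameters $M_1 \geq M_2$, the Cauchy--Schwarz/counting scheme reduces to counting integers $n_1$ with $|n_1|, |n-n_1|\sim N$ and $|\tau-\Psi_n(n_1)|\les M_1$. Here both frequencies are comparable, so the convexity bound $\Psi_n''\ges N^{2\al-2}$ yields a count $\les M_1^{1/2}N^{1-\al}+1$. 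This produces the estimate
\begin{align*}
\|(\P_N f)_{M_1}(\P_N f)_{M_2}\|_{L^2_{t,x}} \les M_1^{3/8} M_2^{3/8}\, (M_2/M_1)^{1/8}\, N^{(1-\al)/2} \prod_{i=1,2}\|(\P_N f)_{M_i}\|_{L^2_{t,x}},
\end{align*}
and a Schur-test summation in $(M_1, M_2)$ converts the $X^{0,3/8}$ factors into the desired norm, yielding \eqref{L4strich}.

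The main obstacle is the counting step: naive bounds on $\sum_{n_1}\jb{\tau-\Psi_n(n_1)}^{-1/2}$ incur a logarithmic loss from the dyadic sum in the intermediate modulation regime. In the linear case this is absorbed by the $(M_2/M_1)^{1/8}$ decay, which gives genuine Schur-summability; in the bilinear case the constraint $N\gg M$ is critical in ensuring that the first-derivative bound dominates and the total count is much smaller than the generic estimate. A secondary technical point is the need for careful bookkeeping of the endpoint $\al=1$, where both estimates should reduce to the classical Bourgain $L^4$-Strichartz result with $b=3/8$ and no derivative loss, providing a useful sanity check on the exponents.
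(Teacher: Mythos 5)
The paper does not actually prove this lemma---it is imported verbatim from \cite[Corollary 2.11]{ST1}---so there is no in-paper argument to compare against; your proof is correct and follows the standard Bourgain-type counting scheme, identical in spirit to the paper's own proof of the high-dispersion analogue (Lemma~\ref{LEM:SuperL4}), where the reduction to the monotone half $n_1\ge n/2$ before invoking convexity is the only step you leave implicit. Both counting inputs check out: the convexity bound $\Psi_n''\ges N^{2\al-2}$ on comparable frequencies gives the count $\les 1+M_1^{1/2}N^{1-\al}$ needed for \eqref{L4strich}, while for \eqref{bilinstrich1} the interplay of the cardinality bound $\les M$ with the first-derivative bound $|\Psi_n'|\sim N^{2\al-1}$ (valid precisely because $N\gg M$ and $\al>\tfrac12$) yields $\sup_{\tau,n}\sum_{n_1}\jb{\tau-\Psi_n(n_1)}^{-1/2}\les 1+M^{1/2}N^{(1-2\al)/2}\les M^{1-\al}$, as you claim.
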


\section{The strongly dispersive case $\al\geq 1$: Proof of Theorem~\ref{THM:1}} \label{SEC:proof main}

In this section, we prove one of our main theorems (Theorem \ref{THM:1}) by assuming Lemma \ref{LEM:energy} and the uniform $L^p$-integrability of the Radon-Nikodym derivative $f_N(t,\cdot)$ \eqref{R1} of the transported measure $(\Phi_{N,t})_*\mu_{s,N,r}$.
We present the proof of Lemma \ref{LEM:energy} and the uniform $L^p$-integrability in Section \ref{SEC:energy} and \ref{SEC:Lp}, respectively. 

\subsection{Proof of Proposition \ref{PROP:den}}\label{SUBSEC:Prop 1.3}

Whilst the proof of Proposition \ref{PROP:den} closely follows~\cite[Section 4]{DT2020},
we include details in order to make this paper self-contained. We recall the basic invariance property of the Lebesgue measures $d\phi_N = \prod_{|n| \le N} d \ft \phi(n)$. 

\begin{lemma}[Liouville's theorem]\label{LEM:Liou}
Let $N\in \NN$. Then, the Lebesgue measure $d\phi_N = \prod_{|n| \le N} d \ft \phi(n)$ is invariant under the flow $\Phi_{N,t}$.
\end{lemma}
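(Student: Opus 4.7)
The plan is to identify $E_N$ with $\C^{2N+1}\cong \R^{2(2N+1)}$ via its Fourier coefficients and reduce the truncated dynamics \eqref{TFNLS} to a finite-dimensional system of smooth ODEs, so that the classical Liouville theorem for ODEs applies: the flow of a smooth vector field on $\R^d$ preserves Lebesgue measure if and only if the vector field is divergence-free. Writing $v_n(t):=\ft u_N(t,n)$, the system takes the form
\begin{align*}
\dot v_n \;=\; i|n|^{2\al} v_n \mp i\, \N_n(v), \qquad \N_n(v):=\sum_{\substack{n_1-n_2+n_3=n \\ |n_1|,|n_2|,|n_3|\le N}} v_{n_1}\overline{v_{n_2}}v_{n_3},
\end{align*}
for $|n|\le N$. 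The Lebesgue measure $d\phi_N=\prod_{|n|\le N}d\ft\phi(n)$ coincides, up to a fixed normalising constant, with the standard Lebesgue measure on $\R^{2(2N+1)}$, so it suffices to verify that the right-hand side above is divergence-free.

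Using Wirtinger derivatives (treating $v_n$ and $\overline{v_n}$ as independent), the real divergence of a complex vector field $F_n(v,\overline v)$ equals $2\sum_n \Re(\dd_{v_n} F_n)$. For the linear part, $\dd_{v_n}(i|n|^{2\al}v_n)=i|n|^{2\al}$ is purely imaginary and so contributes nothing. For the nonlinear part, differentiating $\N_n(v)$ with respect to $v_n$ selects exactly the subsums with $n_1=n$ or $n_3=n$; in either case the resonance constraint $n_1-n_2+n_3=n$ forces $n_2$ to coincide with the surviving index, giving
\begin{align*}
\dd_{v_n}\N_n(v) \;=\; 2\sum_{|m|\le N}|v_m|^2 \;=\; 2\|\pi_{\le N}\phi\|_{L^2(\T)}^2.
\end{align*}
Summing over $|n|\le N$ produces a real quantity, whose product with $\mp i$ is purely imaginary and thus has zero real part. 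Consequently the full vector field is divergence-free, and Liouville's theorem yields the invariance of $d\phi_N$ under $\Phi_{N,t}$.

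Equivalently, one can observe that \eqref{TFNLS} is precisely the finite-dimensional Hamiltonian flow on $E_N$ generated by $H_N(\phi) = \tfrac{1}{2}\|(-\dx^2)^{\al/2}\phi\|_{L^2(\T)}^2 \pm \tfrac{1}{4}\|\phi\|_{L^4(\T)}^4$ with respect to the canonical symplectic form on $\C^{2N+1}$, so the statement is automatic from the Hamiltonian-Liouville theorem. No substantive obstacle is expected; the only point requiring care is the bookkeeping of the Wirtinger derivatives in the cubic nonlinearity, which is immediate because the sharp frequency cut-off $\pi_{\le N}$ preserves the Hermitian-symmetric convolution structure defining $\N_n$.
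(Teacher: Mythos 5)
Your proof is correct. The paper states this lemma without proof, treating it as standard, and your argument is precisely the standard one: the divergence computation is right (the only diagonal contributions $\dd_{v_n}\N_n(v)=2\sum_{|m|\le N}|v_m|^2$ are real, so multiplication by $\mp i$ kills the real part), and the Hamiltonian reformulation is the usual alternative phrasing of the same fact.
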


\begin{proof}[Proof of Proposition \ref{PROP:den}]

It follows from a change of variables, Lemma \ref{LEM:Liou} and the $L^2$-conservation that for any measurable set $A $ in $E_N$, we have
\begin{align*}
(\Phi_{N,t})_* \mu_{s,N,r}(A)&=Z_{s,N}^{-1}\int_{\phi_N \in \Phi_{N,t}^{-1}(A) } \chi_{ \{  \|\phi_N \|_{L^2} \le r   \}}  e^{-\frac 12 \| \phi_N \|_{H^s}^2  } d\phi_N\\
&=Z_{s,N}^{-1} \int_{\phi_N \in A } \chi_{ \{  \| \Phi_{N,t}^{-1}(\phi_N) \|_{L^2} \le r   \}}   e^{-\frac 12 \| \Phi_{N,t}^{-1}(\phi_N) \|_{H^s}^2 }   d\phi_N\\
&= \int_{\phi_N \in A } \chi_{ \{  \|\phi_N \|_{L^2} \le r   \}}  e^{-\frac 12 \| \Phi_{N,t}^{-1}(\phi_N) \|_{H^s}^2 }   e^{\frac 12 \| \phi_N \|_{H^s}^2 } \mu_{s,N}(d\phi_N)\\
&= \int_{\phi_N \in A }   e^{-\frac 12 \| \Phi_{N,t}^{-1}(\phi_N) \|_{H^s}^2 }   e^{\frac 12 \| \phi_N \|_{H^s}^2 } \mu_{s,N,r}(d\phi_N).
\end{align*}	
	
\noi
Therefore, we have
\begin{align}
f_N(t,\phi_N):=\frac{d(\Phi_{N,t} )_*\mu_{s,N,r} }{d\mu_{s,N,r}}(\phi_N)=e^{-\frac 12 \|  \Phi_{N,t}^{-1}(\phi_N )  \|^2_{H^s}  }e^{\frac 12 \| \phi_N  \|^2_{H^s} }.
\label{Td1}
\end{align}
	
\noi
	Now, by the fundamental theorem of calculus and \eqref{TFNLS},  
\noi
\begin{align*}
f_N(t,\phi_N)&=\exp \bigg(-\frac{1}{2} \int_0^t \frac{d}{dr} \| \Phi_{N,r}^{-1}(\phi_N) \|_{H^s}^2 dr \bigg)\\
&=\exp \bigg(\mp\int_0^t \Re \jb{i(|u_N|^2u_N )(-t',\phi_N) , D^{2s}u_N(-t',\phi_N) }_{L^2(\T)}  dt' \bigg).
\end{align*}
Note that the contribution due to the linear evolution in \eqref{TFNLS} is zero. This proves \eqref{R1}.
	
We now define the natural extension of $f_N(t,\cdot)$ on $L^2(\T)$ by $f_N(t,\phi)=f_N(t,\phi_N)$.
It follows from the uniform $L^p(d\mu_{s,r})$-integrability of the Radon-Nikodym derivative $f_N(t,\cdot)$ that by passing to a subsequence, $f_N(t,\cdot ) $ converges weakly in $L^p(d\mu_{s,r})$. Moreover, from Lemma \ref{LEM:energy},  $f_N(t,\phi)$ converges pointwise to 
\begin{align*}
f(t,\phi)=\exp\bigg(\mp\int_0^t \Re \jb{i (|u|^2u)(-t',\phi), D^{2s}u(-t',\phi)   }_{L^2(\T)}    dt' \bigg)
\end{align*}

\noi
for each $\phi \in L^2(\T)$.
Hence, we obtain 
\begin{align}
f_N(t,\cdot) \wto f(t,\cdot) \qquad \text{in $L^p(d\mu_{s,r})$}
\label{W1}
\end{align}

\noi
as $N\to\infty$ (i.e. $f(t,\cdot)$ is the weak limit of $f_N(t,\cdot)$ in $L^p(d\mu_{s,r})$).

It remains to show that $f(t,\cdot)$ is the Radon-Nikodym derivative of the transported measure $(\Phi_t)_*\mu_{s,r}$ with respect to the Gaussian measure $\mu_{s,r}$ with $L^2$-cutoff. 
It follows from Fubini's theorem and \eqref{Td1} that for any bounded and continuous function $\psi$ on $L^2(\T)$, we have 
\begin{align}
&\int \psi(\phi_N) f_N(t,\phi) \chi_{ \{  \|\phi_N\|_{L^2} \le r  \}  }\mu_s(d\phi) \notag \\
&=\int \psi(\phi_N)f_N(t,\phi)\chi_{ \{  \|\phi_N\|_{L^2} \le r  \}  } \mu_{s,N}(d\phi_N)\otimes \mu_{s,N}^\perp(d\phi_N^\perp) \notag \\
&=\int_{\phi_N^\perp \in E_N^\perp }\bigg\{ \int_{\phi_N \in E_N} \psi(\phi_N)(\Phi_{N,t})_{*}\mu_{s,N,r}(d\phi_N)                 \bigg\} \mu_{s,N}^\perp(d\phi_N^\perp) \notag \\
&=\int_{\phi_N^\perp \in E_N^\perp }\bigg\{ \int_{\phi_N \in E_N} \psi(u_N(t,\phi_N))\mu_{s,N,r}(d\phi_N)                 \bigg\}\mu_{s,N}^\perp(d\phi_N^\perp) \notag\\
&=\int \psi(u_N(t,\phi_N)) \chi_{ \{ \| \phi_N \|_{L^2}\le r  \}   } \mu_s(d\phi). 
\label{X1}
\end{align}
	
\noi
From the approximation property of the truncated dynamics \eqref{LEM:Super5} and the Lebesgue dominated convergence theorem, we have
\begin{align}
\int \psi(u_N(t,\phi_N)) \chi_{ \{ \| \phi_N \|_{L^2}\le r  \}   } \mu_s(d\phi) \too \int \psi(u(t,\phi )) \chi_{ \{ \| \phi \|_{L^2}\le r  \}   } \mu_s(d\phi).
\label{X2}
\end{align}

\noi
By \eqref{W1}, we have
\begin{align}
\int \psi(\phi_N) f_N(t,\phi) \chi_{ \{  \| \phi_N \|_{L^2} \le r  \}  }\mu_s(d\phi) \too \int \psi(\phi) f(t,\phi) \chi_{ \{  \|\phi \|_{L^2} \le r  \}  }\mu_s(d\phi).
\label{X3}
\end{align}
	
\noi
Hence, it follows from \eqref{X1}, \eqref{X2} and \eqref{X3} that we have
\begin{align*}
\int \psi(\phi) f(t,\phi) \chi_{ \{  \|\phi \|_{L^2} \le r  \}  }\mu_s(d\phi)
&=\int \psi(u(t,\phi )) \chi_{ \{ \| \phi \|_{L^2}\le r  \}   } \mu_s(d\phi)=\int \psi(\phi) (\Phi_t)_{*}\mu_{s,r}(d\phi).
\end{align*}
	
\noi
This shows that we obtain
\begin{align*}
\frac{d(\Phi_t)_* \mu_{s,r}}{d\mu_{s,r}}&=f(t,\cdot)
=\exp\bigg(\mp\int_0^t \Re \jb{i (|u|^2u)(-t',\cdot ), D^{2s}u(-t', \cdot)   }_{L^2(\T)}    dt' \bigg).
\end{align*}
	
\noi
This completes the proof of Proposition \ref{PROP:den}.
\end{proof}

\subsection{Proof of Theorem \ref{THM:1}}

We are now ready to present the proof of Theorem \ref{THM:1}.  
Here, we suppose the uniform $L^p(d\mu_{s,r})$-integrability of the Radon-Nikodym derivative $f_N(t,\cdot)$ \eqref{R1} whose proof is presented in Section \ref{SEC:Lp}.

Fix $t\in \R$. Let $A \subset L^2(\T)$ be a measurable set such that $\mu_s(A)=0$. Then, for any $r>0$, we have
\begin{align}
\mu_{s,r}(A)=0.
\label{r1}
\end{align}

\noi
Assume that
$f_N(t,\cdot) \in L^p(d\mu_{s,r}),$
uniformly in $N\in \NN$ (i.e. $f_N(t,\cdot)$ is $L^p(d\mu_{s,r})$-integrable with a uniform (in $N$) bound).
Then, it follows from Proposition \ref{PROP:den} and \eqref{r1} that we obtain
\begin{align}
(\Phi_t)_{*}\mu_{s,r}(A)=\int_A f(t,\phi) \mu_{s,r}(d\phi)=0.
\label{qua1}
\end{align}

\noi
From the Lebesgue dominated convergence theorem and \eqref{qua1}, we have
\begin{align*}
(\Phi_t)_{*}\mu_s(A)
&=\lim_{r\to \infty } \int_{\Phi_{-t}(A)} \chi_{ \{ \|\phi \|_{L^2}\le r  \} } \mu_s(d\phi)\\
&=\lim_{r\to \infty } \int_{\Phi_{-t}(A)} \mu_{s,r}(d\phi)\\
&=\lim_{r\to \infty } (\Phi_t)_{*}\mu_{s,r}(A)=0.
\end{align*} 

\noi
This completes the proof of Theorem \ref{THM:1}.

\section{The strongly dispersive case $\al\geq 1$: the energy estimate}\label{SEC:energy}

In this section, we estimate the Radon-Nikodym derivative \eqref{R1} of the transported measure $(\Phi_{N,t})_*\mu_{s,r,N}$. More precisely, we obtain Lemma \ref{LEM:energy} which will be used to show the uniform $L^p(d\mu_{s,r})$-integrability of the Radon-Nikodym derivative \eqref{R1} in Section \ref{SEC:Lp}.

As a consequence of \cite{Bou}, \cite[Appendix 1]{OTz} and \cite[Appendix 2]{FT}, we have the following global well-posedness and approximation property of solutions to FNLS~\eqref{FNLS} when $\al\geq 1$.
We note that from the $L_{t,x}^4$-Strichartz estimate in Lemma~\ref{LEM:SuperL4}, one can prove properties \eqref{LEM:Super1}, \eqref{LEM:Super3} and \eqref{LEM:Super5}. 

\begin{lemma}[GWP of FNLS~\eqref{FNLS} for $\al\geq 1$~\cite{Bou, OTz, FT}]\label{LEM:Superapprox}
	Let $\al\geq 1$, $\s\geq 0$ and $u_0\in H^{\s}(\T)$. Given $N\in \NB$, there exists a unique global solutions $u_N \in C(\R;H^{\s}(\T))$ to the truncated FNLS~\eqref{TFNLS} with $u_N\vert_{t=0}=\pi_{\leq N}u_0$ and a unique global solution $u\in C(\R;H^{\s}(\T))$ to FNLS~\eqref{FNLS} with $u\vert_{t=0}=u_0$. Moreover, these solutions satisfy:
	\begin{align}
	&\sup_{t\in [-T,T]}\|u(t)\|_{H^{\s}}+\|u\|_{X_{T}^{s,b}}\leq C\|u_0\|_{H^{\s}}, \quad T>0, \label{LEM:Super1}\\
	&\hphantom{XXXXXXXXX}\|u(t)\|_{L^{2}}=\|u_0\|_{L^{2}},\quad t\in \R, \notag\\
	&\sup_{t\in [-T,T]}\|u_N(t)\|_{H^{\s}}+\|u_N\|_{X_{T}^{s,b}}\leq C\|\pi_{\le N}u_0 \|_{H^{\s}}, \quad T>0, \label{LEM:Super3}\\
	&\hphantom{XXXXXXXXX}\|u_N(t)\|_{L^{2}}=\|  \pi_{\leq N}u_0\|_{L^{2}},\quad t\in \R, \notag 
	\end{align}
	where $C>0$ depends only on $T$ and $\|u_0\|_{L^{2}}$ and $b=\frac{1}{2}+\dl$ for fixed $0<\dl\ll 1$ depending only on $\al$. Moreover, 
	for any $T>0$, we have 
	\begin{align}
	\sup_{t\in [-T,T]}\|u(t)-u_{N}(t)\|_{H^{\s}}+\|u-u_N\|_{X^{\s,b}_{T}}\to 0, \label{LEM:Super5}
	\end{align}
	as $N\to \infty$. 
\end{lemma}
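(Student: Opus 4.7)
The plan is to establish Lemma~\ref{LEM:Superapprox} via the standard Bourgain scheme in $X^{s,b}$ spaces, with Lemma~\ref{LEM:SuperL4} as the core analytic input. The essential step I would prove first is a trilinear estimate of the form
\begin{align*}
\big\| u_1 \overline{u_2} u_3 \big\|_{X^{\sigma, -\frac{1}{2}+2\delta}_T} \lesssim T^{\delta} \prod_{j=1}^{3} \|u_j\|_{X^{\sigma, b}_T}, \qquad b = \tfrac{1}{2}+\delta,
\end{align*}
for sufficiently small $\delta = \delta(\alpha) > 0$. At $\sigma = 0$ this reduces by duality to $|\langle u_1 \overline{u_2} u_3, v\rangle| \lesssim \prod_j \|u_j\|_{L^{4}_{t,x}} \|v\|_{L^{4}_{t,x}}$ via H\"older, and Lemma~\ref{LEM:SuperL4} promotes each $L^4_{t,x}$ norm to $X^{0,b_0}$ with $b_0 = \tfrac{1}{4}(1+\tfrac{1}{2\alpha}) \leq \tfrac{3}{8}$ when $\alpha \geq 1$. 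Since $b_0 < \tfrac{1}{2}$, the $T^{\delta}$ factor is extracted via Lemma~\ref{LEM:sharp}. For $\sigma > 0$ I would distribute $\jb{n}^{\sigma}$ to the highest-frequency factor in the standard way.

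Given the trilinear estimate, the Duhamel formulation contracts in a ball of $X^{\sigma,b}_T$, yielding local well-posedness of both \eqref{FNLS} and \eqref{TFNLS} for every $\sigma \geq 0$ with lifespan depending only on $\|u_0\|_{L^2}$. Uniformity in $N$ for \eqref{TFNLS} is automatic since $\pi_{\leq N}$ is a spatial Fourier multiplier and hence bounded on every $X^{\sigma, b}$ space. Testing \eqref{TFNLS} against $\overline{u_N}$ and using self-adjointness of $\pi_{\leq N}$ on $L^2$ gives $L^2$-conservation; iterating the local theory then yields the global bounds \eqref{LEM:Super1} and \eqref{LEM:Super3}. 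For the approximation \eqref{LEM:Super5}, I would set $w_N := u - u_N$, which satisfies
\begin{align*}
i\partial_t w_N + (-\partial_x^2)^{\alpha} w_N &= \pm \pi_{>N}\big(|u|^2 u\big) \pm \pi_{\leq N}\big(|u|^2 u - |u_N|^2 u_N\big),
\end{align*}
with $w_N(0) = \pi_{>N} u_0$. The second forcing term is trilinear in $u, u_N, w_N$; the trilinear estimate combined with the uniform bounds \eqref{LEM:Super1}, \eqref{LEM:Super3} yields $C(\|u_0\|_{H^\sigma})\|w_N\|_{X^{\sigma,b}_T}$ with constant independent of $N$. Meanwhile, $\|\pi_{>N} u_0\|_{H^\sigma} \to 0$, and $\|\pi_{>N}(|u|^2 u)\|_{X^{\sigma, -1/2+2\delta}_T} \to 0$ by dominated convergence. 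A continuity/bootstrap argument on short subintervals then closes the estimate on the full window $[-T,T]$.

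The main technical obstacle, in my view, is keeping the parameters $(b, b_0, \delta)$ mutually consistent across the trilinear estimate, the time-localization gain from Lemma~\ref{LEM:sharp}, and the persistence-of-regularity argument; the threshold case is $\alpha = 1$, where $b_0 = \tfrac{3}{8}$ leaves the least slack. A secondary point is that the bootstrap in the approximation step must be iterated over a controlled number of subintervals on $[-T,T]$, which is handled by noting that the local lifespan depends only on the conserved quantity $\|u_0\|_{L^2}$. Since all of these steps follow the template of \cite{Bou, OTz, FT}, the proof amounts to verifying that the standard argument goes through with the fractional Strichartz estimate replacing its $\alpha \in \{1,2\}$ analogues.
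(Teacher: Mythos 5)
Your proposal is correct and follows exactly the route the paper intends: the paper does not prove Lemma~\ref{LEM:Superapprox} in detail but cites \cite{Bou, OTz, FT} and notes that the $L^4_{t,x}$-Strichartz estimate of Lemma~\ref{LEM:SuperL4} is the only new input needed, which is precisely the trilinear-estimate/contraction/$L^2$-conservation/persistence-of-regularity scheme you outline. Your parameter bookkeeping ($b_0=\tfrac14(1+\tfrac{1}{2\al})\le \tfrac38$ for $\al\ge 1$, so $\dl\le \tfrac1{16}$ suffices) and the difference equation for $w_N=u-u_N$ match the standard arguments in \cite[Appendix 1]{OTz} and \cite[Appendix 2]{FT}.
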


We now prove the crucial lemma (Lemma \ref{LEM:energy}) to control the Radon-Nikodym derivative \eqref{R1} of the transported measure $(\Phi_{N,t})_*\mu_{s,N}$.

\begin{lemma}\label{LEM:energy} 
Let $\al\geq 1$, $s\in (\frac{1}{2},1]$ and put $\s =s-\frac{1}{2}-\eps$ for sufficiently small $\eps>0$. Given $N\in \NB $, let $u_{N}\in C(\R;H^{\s}(\T))$ be the solution to \eqref{TFNLS} satisfying $u_{N}\vert_{t=0}=\pi_{\leq N}u_0$, as assured by Lemma~\ref{LEM:Superapprox}. Then, for any $T>0$, we have
\begin{align}
\begin{split}
\bigg| \int_{0}^{T} \Re\, \jb{ i|u_N|^{2}u_N(-t,\cdot),D^{2s}u_N(-t,\cdot)}_{L^2(\T)}dt \bigg|  
&\leq C \|u_N\|_{X_{T}^{0,b}}^{3}\| u_N\|_{X_{T}^{\s,b}}^{3}\\ 
&\le C(\|u_0\|_{L^2},T)\|u_0\|_{H^{\s}}^{3}.
\end{split}
\label{energyest1}
\end{align}

\noi
for some $b=\frac{1}{2}+\dl$, $0<\dl\ll 1$.
Furthermore, if we define $F(u)$ as the functional on the left hand side of \eqref{energyest1} for $u\in C(\R; H^{\s}(\T))$ the solution to FNLS~\eqref{FNLS} with $u\vert_{t=0}=u_0$, then $F(u)$ satisfies the same bound in \eqref{energyest1} and $F(u_N)\to F(u)$ as $N\to \infty$.
\end{lemma}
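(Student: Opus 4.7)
The plan is to recast the time integral in \eqref{energyest1} as the result of a normal form reduction exploiting the dispersion of the linear flow of \eqref{FNLS}. After Plancherel, the integrand reads
\[
\Re\,\jb{i|u_N|^2 u_N(-t), D^{2s}u_N(-t)}_{L^2} = \Im\sum_{\substack{n_1-n_2+n_3=n_4\\|n_j|\le N}} |n_4|^{2s}\,\hat u_N(n_1,-t)\overline{\hat u_N(n_2,-t)}\hat u_N(n_3,-t)\overline{\hat u_N(n_4,-t)}.
\]
Exploiting the symmetries of the summand under $n_1\leftrightarrow n_3$, $n_2\leftrightarrow n_4$, and the sign change under complex conjugation (which corresponds to the pair-swap $(n_1,n_2,n_3,n_4)\to (n_2,n_1,n_4,n_3)$), I symmetrize the weight $|n_4|^{2s}$ into $-\tfrac14\Psi_s(\bar n)$ (up to passing from $|n|^{2s}$ to $\jb{n}^{2s}$, which costs only lower order). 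The crucial payoff is that on the resonant set $\{n_1,n_3\}=\{n_2,n_4\}$ one has $\Psi_s(\bar n)=0$ automatically, so the sum may be restricted to the non-resonant frequencies where Lemma~\ref{lemma: phase lower bound} ensures $|\Phi_\al(\bar n)|>0$.

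I then pass to the interaction representation $w_N(n,t)=e^{it|n|^{2\al}}\hat u_N(n,t)$, producing the oscillating factor $e^{-it\Phi_\al(\bar n)}$, and integrate by parts in $t$ against this factor. This yields boundary terms at $t=0,T$ weighted by $\Psi_s(\bar n)/\Phi_\al(\bar n)$, and a volume integral in which the time derivative falls on $\prod w_N(n_j)$. Substituting \eqref{TFNLS} to compute $\dt w_N(n_j)$ turns the volume integral into a six-linear expression in $u_N$. At this stage Lemma~\ref{LEM:mult} supplies the multiplier bound $|\Psi_s(\bar n)|/|\Phi_\al(\bar n)|\lesssim n_{\max}^{2s-2\al}$, which for $\tfrac12<s\le 1\le \al$ is harmless (and a gain whenever $s<\al$).

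For the four-linear boundary terms I use the embedding $X^{\sigma,b}_T\hookrightarrow C([0,T];H^\sigma(\T))$ together with a Plancherel/Cauchy--Schwarz argument distributing $n_{\max}^{2s-2\al}$ across the four factors, ultimately reducing to $\|u_N(t)\|_{W^{(2s-2\al)/4,4}}^4$ which by Sobolev embedding on $\T$ is controlled by $\|u_N\|_{X^{\sigma,b}_T}^4$ as soon as $s+\al>\tfrac32$ (which is guaranteed by $s>\tfrac12$ and $\al\ge1$). For the six-linear volume term I perform a Littlewood--Paley decomposition, group the six factors into three $L^4_{t,x}$ pairs, and invoke the $L^4_{t,x}$ Strichartz estimate \eqref{L4strich2} of Lemma~\ref{LEM:SuperL4} together with the embedding \eqref{Xembed1}. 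A case analysis on the frequency configuration (all four outer frequencies comparable versus high$\times$low interactions) allows the $n_{\max}^{2s-2\al}$ gain to be absorbed onto three suitably placed factors carrying $X^{\sigma,b}_T$ regularity, while leaving the remaining three in $X^{0,b}_T$; this yields the bound $C\|u_N\|_{X^{0,b}_T}^3\|u_N\|_{X^{\sigma,b}_T}^3$, and then \eqref{LEM:Super3} supplies the second inequality in \eqref{energyest1}.

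The main obstacle is the borderline regime $\al=1$ with $s$ close to $1$, where the gain $n_{\max}^{2s-2\al}$ from Lemma~\ref{LEM:mult} is only $O(1)$; here the finer part (i) of Lemma~\ref{LEM:Psis} (encoding the double mean value theorem) must be exploited to trade size for the separations $|n_4-n_1|,|n_4-n_3|$, which in turn are absorbed through the sharp index $b=\tfrac14(1+\tfrac{1}{2\al})$ in \eqref{L4strich2}. Finally, the convergence $F(u_N)\to F(u)$ as $N\to\infty$ follows from the $X^{\sigma,b}_T$-approximation property \eqref{LEM:Super5} together with the multilinearity of the reduced expression: applying the four- and six-linear estimates above to the differences gives $|F(u_N)-F(u)|\lesssim \|u_N-u\|_{X^{\sigma,b}_T}(\|u_N\|_{X^{\sigma,b}_T}+\|u\|_{X^{\sigma,b}_T})^{5}\to 0$, which also shows that the identical bound holds for $F(u)$.
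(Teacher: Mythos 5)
Your strategy coincides with the paper's: cancel the resonant contribution, symmetrize the weight into $\Psi_s(\cj n)$, pass to the interaction representation, integrate by parts in time against $e^{-it\Phi_\al(\cj n)}$, control the multiplier via Lemma~\ref{LEM:mult}, and treat the boundary (quadrilinear) and volume (sextilinear) terms separately, with convergence of $F(u_N)$ by multilinearity. However, two concrete steps would fail as written. First, for the sextilinear volume term you propose to ``group the six factors into three $L^4_{t,x}$ pairs'': each pair then lands in $L^2_{t,x}$ and the H\"older exponents sum to $\tfrac12+\tfrac12+\tfrac12=\tfrac32\neq 1$, so the product is not integrable by this scheme. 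The working split (see Case 1.1 of the paper's proof) places two factors in $L^{p}_tL^{2+\frac{2}{\dl}}_x$ for large $p$ (after Sobolev embedding, paid for by part of the gain $n_{\max}^{2s-2\al}$ together with the $\jb{n_j}^{-\s}$ weights) and the remaining four in $L^{4+}_{t}L^{4}_x$-type norms via \eqref{Xembed1}--\eqref{Xembed2}; getting these exponents to close is precisely where the restrictions $s>\tfrac12$ and $\al\geq 1$ enter, and it cannot be bypassed by a symmetric $L^4$ pairing.

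Second, your boundary estimate distributes $n_{\max}^{2s-2\al}$ evenly over the four factors and lands on $\|u_N(t)\|_{W^{(2s-2\al)/4,4}}^4\lesssim \|u_N(t)\|_{H^{\s}}^4$. This is a \emph{quartic} bound in $\|u_0\|_{H^{\s}}$ and therefore does not yield the stated estimate $C(\|u_0\|_{L^2},T)\|u_0\|_{H^{\s}}^{3}$, since $\|u_0\|_{H^\s}$ is not controlled by $\|u_0\|_{L^2}$. The cubic exponent is not cosmetic: it is what allows the exponential integrability in Section~\ref{SEC:Lp} to be run with $q=\tfrac32$ in Lemma~\ref{LEM:un1}. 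The fix is the paper's asymmetric Cauchy--Schwarz: keep one factor unweighted in $\l^2_n$ (controlled by the conserved $L^2$ norm) and spend the gain $n_{\max}^{2s-2\al}$ on summability of the remaining weights, which requires $\nu=3\s+2\al-2s>1$, i.e.\ $s>\tfrac52-2\al+3\eps$ --- automatic for $\al\geq1$, $s>\tfrac12$. A smaller point: the double mean value theorem is not a borderline device absorbed ``through the sharp $b$'' in \eqref{L4strich2}; it is already built into Lemma~\ref{LEM:mult} via Lemma~\ref{LEM:Psis} and is used uniformly across all frequency cases.
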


\begin{proof} 
In the following, we assume \eqref{TFNLS} is focusing; that is, with the positive sign on the nonlinearity.
Note that 
\begin{align}
\begin{split}
\int_{0}^{T} \Re\, \jb{ i|u|^{2}u(-t,\cdot),D^{2s}u(-t,\cdot)}dt& = \text{Im} \int_{0}^{T} \int_{\T} \cj{|u|^{2}u}(-t,x)D^{2s}u(-t,x)dxdt \\
& =\text{Im} \int_{0}^{T} \int_{\T} \big( |u(-t,x)|^{2}-\frac{1}{\pi}\|u(-t,\cdot)\|_{L^{2}}^{2}\big) \\
& \hphantom{XXXXXXXXX}\times\cj{u}(-t,x)D^{2s}u(-t,x)dxdt\\
& = \text{Im} \int_{0}^{T} \int_{\T}\big(  \cj{\N(u)}(-t,x)+\cj{\mathcal{R}(u)}(-t,x)\big) \\
& \hphantom{XXXXXXXXXXXX}\times  D^{2s}u(-t,x)dxdt,
\end{split} \label{energycomp}
\end{align}

\noi 
where 
\begin{align*}
\N(u_1,u_2,u_3)(t,x)&:=\sum_{n\in \Z}\sum_{\substack{n=n_1-n_2+n_3 \\ n_1,n_3\neq n}} \ft u_1 (t,n_1)\cj{ \ft u_2 (t,n_2)} \ft u_3(t,n_3)e^{inx}, \\
\mathcal{R}(u_1,u_2,u_3)(t,x)&:= \sum_{n\in\Z} \ft u_1(t,n)\cj{ \ft u_2(t,n)}\ft u_3(t,n)e^{inx},
\end{align*}
\noi
and $\N(u,u,u)=:\N(u)$ and $\mathcal{R}(u,u,u)=:\mathcal{R}(u)$. 
The second equality in \eqref{energycomp} removes the resonant interactions where $n_1=n$ and $n_2 =n_3 \neq n$ and $n_3=n$ and $n_2 =n_1 \neq n$.
By Parseval's theorem, we see that 
\begin{align*}
\text{Im}\int_{\T} \cj{\mathcal{R}(u)}(-t,x)D^{2s}u(-t,x)dx=0.
\end{align*}
\noi
Therefore, we have
\begin{align*}
\int_{0}^{T} \text{Re} \jb{ i|u|^{2}u(-t,\cdot),D^{2s}u(-t,\cdot)}dt= \text{Im} \int_{0}^{T} \int_{\T}  \cj{\N(u)}(-t,x)D^{2s}u(-t,x)dxdt.
\end{align*}
Let $N\in \NB \cup \{\infty\}$. With $\ft v_N(t,n)=e^{it|n|^{2\al}}\ft u_N(t,n)$ and a symmetrisation argument\footnote{Using $\text{Im} \,a=-\text{Im} \,\cj{a}$.} (as in \cite{HPT, FT}), we have
\begin{align}
&\text{LHS of} \,\, \eqref{energyest1}  \notag \\
& =\textup{Im} \int_{0}^{T} \sum_{n_4\in\Z}\sum_{  \substack{n_4=n_1-n_2+n_3 \\ n_1,n_3\neq n_4}  } e^{-it\Phi_{\al}(\cj{n})}\ft v_N(-t,n_1)\cj{ \ft v_{N}}(-t,n_2)\ft v_N(-t,n_3) \jb{n_4}^{2s}\cj{ \ft v_N}(-t,n_4) dt   \notag \\
& = \frac{1}{4} \textup{Im} \int_{-T}^{0} \sum_{n_4\in\Z}\sum_{  \substack{n_4=n_1-n_2+n_3 \\ n_1,n_3\neq n_4}  }\Psi_{s}(\cj{n}) e^{it\Phi_{\al}(\cj{n})}\ft v_N(t,n_1)\cj{ \ft v_{N}}(t,n_2)\ft v_N(t,n_3) \cj{ \ft v_N}(t,n_4) dt  \notag\\ 
& =:\frac{1}{4} F(v_N).
\label{sym1}
\end{align}

\noi
By symmetry, we may also assume $|n_1|\geq |n_3|$ and $|n_4|\geq |n_2|$.
We perform an integration by parts in time and use \eqref{TFNLS} to obtain
\begin{equation}
\begin{split}
&F(v_N)\\
& = \text{Im}\sum_{n_4\in\Z}\sum_{  \substack{n_4=n_1-n_2+n_3 \\ n_1,n_3\neq n_4}  } \Psi_{s}(\cj{n})\frac{e^{it\Phi_{\al}(\cj{n})}}{i\Phi_{\al}(\cj{n})}\ft v_N(t,n_1)\cj{ \ft v_{N}}(t,n_2)\ft v_N(t,n_3)\cj{ \ft v_N}(t,n_4) dt  \bigg\vert_{t=-T}^{0} \\
&\hphantom{X} - \text{Im}\int_{-T}^{0}\sum_{n_4\in \Z}\sum_{  \substack{n_4=n_1-n_2+n_3 \\ n_1,n_3\neq n_4}  }\Psi_{s}(\cj{n})\frac{e^{it\Phi_{\al}(\cj{n})}}{i\Phi_{\al}(\cj{n})} \dt\big(\ft v_N(t,n_1)\cj{ \ft v_{N}}(t,n_2)\ft v_N(t,n_3) \cj{ \ft v_N}(t,n_4) \big) dt  \\
& = \text{Im}\sum_{n_4\in\Z}\sum_{  \substack{n_4=n_1-n_2+n_3 \\ n_1,n_3\neq n_4}  }\frac{ \Psi_{s}(\cj{n})}{i\Phi_{\al}(\cj{n})}\ft u_N(t,n_1)\cj{ \ft u_{N}}(t,n_2)\ft u_N(t,n_3)\cj{ \ft u_N}(t,n_4) dt  \bigg\vert_{t=-T}^{0}  \\ 
&\hphantom{X}+2\text{Im} \int_{-T}^{0}\sum_{n_4\in \Z}\sum_{  \substack{n_4=n_1-n_2+n_3 \\ n_1,n_3\neq n_4}  }\frac{\Psi_{s}(\cj{n})}{i\Phi_{\al}(\cj{n})}  \cj{ \ft u_{N}}(t,n_2)\ft u_N(t,n_3) \cj{ \ft u_N}(t,n_4)   \\
&\hphantom{XXXXXXXXX}\times\bigg(\sum_{n_1=n_{11}-n_{12}+n_{13}} \ft u_{N}(t,n_{11})\cj{ \ft u_{N}(t,n_{12})} \ft u_{N}(t,n_{13})  \bigg)   dt \\ 
&\hphantom{X}+2\text{Im} \int_{-T}^{0}\sum_{n_4\in \Z}\sum_{  \substack{n_4=n_1-n_2+n_3 \\ n_1,n_3\neq n_4}  }\frac{\Psi_{s}(\cj{n})}{i\Phi_{\al}(\cj{n})}  \ft u_{N}(t,n_1) \ft u_N(t,n_3) \cj{ \ft u_N}(t,n_4) \\
& \hphantom{XXXXXXXXX}\times \cj{\bigg(\sum_{n_2=n_{21}-n_{22}+n_{23}} \ft u_{N}(t,n_{21})\cj{ \ft u_{N}(t,n_{22})} \ft u_{N}(t,n_{23})  \bigg)}   dt \\ 
& =: \N_{0}(u_N)(t)\vert_{t=-T}^{0}+\N_{1}(u_N)+\N_2(u_N).
\label{intebypart}
\end{split}
\end{equation}

\noi
In the following, we will simply write $u$ for $u_N$.
First, we will show 
\begin{align}
|\N_{1}(u)|  &= \bigg\vert 2\text{Re} \int_{-T}^{0}\sum_{n_4\in \Z}\sum_{  \substack{n_4=n_1-n_2+n_3 \\ n_1,n_3\neq n_4}  }\frac{\Psi_{s}(\cj{n})}{\Phi_{\al}(\cj{n})}  \bigg(\sum_{n_1=n_{11}-n_{12}+n_{13}} \ft u(t,n_{11})\cj{ \ft u(t,n_{12})} \ft u(t,n_{13})  \bigg) \notag \\
&\hphantom{XXX}\times \cj{ \ft u}(t,n_2)\ft u(t,n_3) \cj{ \ft u}(t,n_4) dt \bigg\vert \notag \\
& \les \|u\|_{X_{T}^{0,b}}^{3}\| u\|_{X_{T}^{\s,b}}^{3}, \label{Remainderest}
\end{align}
for some $b=\frac{1}{2}+\dl$, $0<\dl\ll 1$.
The same argument can be applied to prove \eqref{Remainderest} for $\N_{2}(u_N)$. Thus, we neglect to show an estimate for $\N_{2}(u_N)$.
In the following, we let $w$ be any extension of $u$ on $[-T,T]$. Namely, $w(t,x)\vert_{[-T,T]}=u(t,x)$.
We split the frequency region into a few cases. We suppose that $|n_{11}|\geq |n_{13}|$ and let $n_{\text{max}}^{(1)}=\max( |n_{11}|,|n_{12}|)+1$.

\medskip
\noi
$\bullet$\underline{ \textbf{Case 1:} $n_{\text{max}}^{(1)} \gg n_{\text{max}}$}

\noi
In this case, $n_{\text{max}}^{(1)} \gg |n_1|$ so $|n_{11}|\sim |n_{12}|$ or $|n_{11}|\sim |n_{13}|$.
Without loss of generality, we assume $|n_{11}|\sim |n_{12}|$.

\medskip
\noi
$\bullet$ \underline{\textbf{Case 1.1:} $|n_{13}|\sim |n_{11}|$}

\noi
We may assume $n_{\text{max}}=|n_3|$.
We write 
\begin{align*}
&\N_1(u) = \sum_{n_4\in \Z} \sum_{\substack{ n_4=n_1-n_2+n_3 \\ n_1=n_{11}-n_{12}+n_{13}}}  \frac{\Psi_{s}(\cj{n})}{\Phi_{\al}(\cj{n})} 
\F_{t}\big\{ ( \chi_{[-T,0]}(t) \cj{ \ft w}(t,n_4)) \cj{\ft w}(t,n_2)  \ft w(t,n_3) \\
&\hphantom{XXXXXXXXXXXXXXXXXXXX}\times \ft w(t,n_{11})\cj{ \ft w}(t,n_{12}) \ft w(t,n_{13})  \big\}(0) \\
& =\intt_{\tau_1-\tau_2+\tau_3-\tau_4 +\tau_5-\tau_6=0}\sum_{n_4\in \Z} \sum_{\substack{ n_4=n_1-n_2+n_3 \\ n_1=n_{11}-n_{12}+n_{13}}}  \frac{\Psi_{s}(\cj{n})}{\Phi_{\al}(\cj{n})}  \ft w(\tau_1,n_{11}) \cj{\ft w}(\tau_2,n_{12}) \ft w(\tau_3,n_{13})  \\
& \hphantom{XXXXXXXXXXXXXXXXXXXX} \times\ft w(\tau_4,n_3) \cj{\ft w}(\tau_5,n_2)  \cj{ \F \{\chi_{[-T,0]} \ft w(\cdot,n_4) \}} (\tau_6).
\end{align*}
Given $\eps>0$, we choose $\dl>0$ such that $\frac{\dl}{1+\dl}=4\eps$.
We define 
\begin{align*}
f&:=\F_{t,x}^{-1}(\jb{n}^{\s}|\ft w(\tau,n)|),\\ 
g&:=\F_{t,x}^{-1}(|\ft w(\tau,n)|), \\
g_{T}&:=\F_{t,x}^{-1}| \F_{t,x} \{ \chi_{[-T,0]} w\}(\tau,n)|.\\ 
\end{align*}

\noi
By Lemma~\ref{LEM:mult}, we have
\begin{align*}
\frac{|\Psi_{s}(\cj{n})|}{|\Phi_{\al}(\cj{n})|} \frac 1{ \jb{n_{11}}^{\s} \jb{n_{12}}^{\s}\jb{n_{13}}^{\s} } & \les \frac{  \jb{n_{\text{max}}}^{2s-2\al }}{ \jb{n_{\text{max}}^{(1)}}^{3\s}} \les \frac{1}{\jb{n^{(1)}_{\text{max}}}^{3\eps} \jb{n_{\text{max}}}^{\nu+6\eps}},
\end{align*}
where $\nu:= 3\s+2\al-2s-9\eps$ and the second inequality follows as $s+2\al \geq \tfrac{3}{2}+12\eps$.
Then, Parseval's theorem implies
\begin{align*}
&|\N_1(u)|\\
&\les \intt_{\tau_1-\tau_2+\tau_3-\tau_4 +\tau_5-\tau_6=0}\sum_{n_4\in \Z} \sum_{\substack{ n_4=n_1-n_2+n_3 \\ n_1=n_{11}-n_{12}+n_{13}}}  \frac{1}{\jb{n^{(1)}_{\text{max}}}^{3\eps} \jb{n_{\text{max}}}^{\nu+6\eps}}  \F_{t,x}f(\tau_1,n_{11}) \F_{t,x}f(\tau_2,n_{12})   \\
& \hphantom{XXXXXXXXXXXXXXXX} \times \F_{t,x}f(\tau_3,n_{13}) \F_{t,x}g(\tau_4,n_3) \F_{t,x}g(\tau_5,n_2) \F_{t,x}g_{T}(\tau_6,n_4) \\
& \les \int_{\R}\int_{\T} |D^{-\frac{\nu}{2}}g_T| |D^{-\frac{\nu}{2}}g| |D^{-6\eps}g| |D^{-\eps}f|^3  dxdt.
\end{align*}

\noi
By using H\"older's inequality, Sobolev's inequality, the $L^{4}_{t,x}$-Strichartz inequality \eqref{Xembed2}, \eqref{Xembed1}, and Lemma \ref{LEM:sharp}, we have
\begin{align*}
\int_{\R}\int_{\T} &|D^{-\frac{\nu}{2}}g_T| |D^{-\frac{\nu}{2}}g| |D^{-6\eps}g| |D^{-\eps}f|^3  dxdt\\
&\les \int_\mathbb{R} \Vert D^{-\frac{\nu}{2}}g \Vert_{L_x^{2+\frac{2}{\dl}}} \Vert D^{-\frac{\nu}{2}} g_T \Vert_{L_x^{2+\frac{2}{\dl}}} \Vert  ( D^{-6\eps}g )(D^{-\eps}f)^{3}  \Vert_{L_{x}^{1+\dl}}dt\\
&\les \int_{\mathbb{R}} \Vert D^{-\frac{\nu}{2} +\frac{1}{2(1+\dl)}  } g \Vert_{L_x^2} \Vert D^{-\frac{\nu}{2} +\frac{1}{2(1+\dl)}  } g_T \Vert_{L_x^2} \Vert D^{-6\eps}g \Vert_{L_x^{\frac{4(1+\dl )}{1-3\dl }}}  \Vert D^{-\eps}f \Vert_{L_x^4}^3dt\\
&\les \Vert g \Vert_{L_t^{\frac{1}{2\eps}} L_x^2 } \Vert g_T \Vert_{L_t^{\frac{1}{2\eps}} L_x^2 }  \Vert D^{-6\eps +\frac{\dl}{1+\dl}} g \Vert_{L_t^{4+\frac{16\eps}{1-4\eps}}L_x^4 } \Vert D^{-\eps}f \Vert^3_{L_t^{4+\frac{16\eps}{1-4\eps}}L_x^4 } \\
&\les \Vert w \Vert^2_{X^{\s,\frac{1}{2}- }} \Vert D^{-5\eps+\frac{\dl}{1+\dl} }g \Vert_{X^{0,\frac{1}{2}-}} \Vert w \Vert_{X^{0,\frac{1}{2}-}}^3\\
&\les \Vert w \Vert_{X^{\s,\frac{1}{2}-}}^3  \Vert w \Vert_{X^{0,\frac{1}{2}-}}^3.
\end{align*}

\noi
Here, we supposed that
\begin{align*}
-\frac{\nu}{2}+\frac{1}{2(1+\dl )}\leq 0 \iff s+2\al\geq \frac{5}{2}+8\eps,
\end{align*}
which is assured by assuming that $\eps>0$ is sufficiently small so that $s\geq \frac{1}{2}+8\eps$.
By taking an infimum over all such extensions $w$ of $u$, we have shown
\begin{align*}
|\N_{1}(u)|\les \|u\|_{X^{0,\frac{1}{2}+}_{T}}^{3} \|u\|_{X^{\s,\frac{1}{2}+}_{T}}^{3}.
\end{align*}

\noi
$\bullet$ \underline{\textbf{Case 1.2:} $|n_{11}|\gg |n_{13}| \gg n_{\text{max}}$}

\noi
In this case, we have 
\begin{align*}
\frac{|\Psi_{s}(\cj{n})|}{|\Phi_{\al}(\cj{n})|} \frac{1}{ \jb{n_{11}}^{\s} \jb{n_{12}}^{\s}\jb{n_{13}}^{\s} }
 &
  \les \frac{1}{\jb{n^{(1)}_{\text{max}}}^{2\eps} \jb{n_{13}}^{\eps}  \jb{n_{\text{max}}}^{\nu+6\eps}},
\end{align*}

\noi
where $\nu=3\s+2\al-2s-9\eps$. Therefore, we have
\begin{align*}
|\N_1(u)| 
& \les \int_{\R}\int_{\T} |D^{-\frac{\nu}{2}}g_T| |D^{-\frac{\nu}{2}}g||D^{-6\eps}g|  |D^{-\eps}f|^3  dxdt
\end{align*}
and hence one can proceed as in Case 1.1 above.

\medskip
\noi
$\bullet$ \underline{\textbf{Case 1.3:} $ |n_{13}| \ll n_{\text{max}}$}

\noi
We may assume $|n_{\text{max}}|\sim |n_4|$. Then, we have 
\begin{align*}
\frac{|\Psi_{s}(\cj{n})|}{|\Phi_{\al}(\cj{n})|} \frac{1}{ \jb{n_{11}}^{\s} \jb{n_{12}}^{\s} \jb{n_4}^{\s} }
  &\les \frac{1}{\jb{n^{(1)}_{\text{max}}}^{2\eps} \jb{n_4}^{\eps} \jb{n_{\text{max}}}^{\nu+6\eps}}
\end{align*}

\noi
where $\nu=3\s+2\al-2s-9\eps$, and hence one can proceed as in Case 1.1. 

 In the following, unless explicitly stated otherwise (for example, as in Subcase 2.2.2), we set $\nu:=3\s+2\al-2s-9\eps$.

\medskip
\noi
$\bullet$ \underline{\textbf{Case 2:} $|n_{\text{max}}^{(1)}|\sim |n_{\text{max}}|$}

\medskip
\noi
$\bullet$ \underline{\textbf{Case 2.1:} $ |n_{\text{max}}| \gg |n_1|$}

\noi
In this case, $|n_\text{max}|\sim |n_4|\sim |n_2|$. 
Then, 
\begin{align*}
\frac{|\Psi_{s}(\cj{n})|}{|\Phi_{\al}(\cj{n})|} \frac{1}{ \jb{n_{11}}^{\s} \jb{n_4}^{\s} \jb{n_2}^{\s} }
&\les \frac{1}{\jb{n_{\text{max}}}^{3\eps}  \jb{n_{\text{max}}}^{\nu+6\eps}}
\end{align*}

\noi
and hence one can proceed as in Case 1.1.

\medskip
\noi
$\bullet$ \underline{\textbf{Case 2.2:} $ |n_{\text{max}}| \sim |n_1|$}

\medskip
\noi
We have $|n_1|\sim |n_3|$ or $|n_1|\sim |n_4|$. We assume $|n_1|\sim |n_3|$, which leads to two subcases. 

\medskip
\noi
$\bullet$ \underline{ \textbf{Subcase 2.2.1:}  $|n_{11}|\gg |n_{12}| $}

\medskip
\noi
In this case, we have $|n_{11}|\sim |n_3|$. Without loss of generality, we assume $|n_4|=\max(|n_{12}|,|n_{13}|,|n_2|, |n_4|)$. If $|n_4|\sim n_{\text{max}}$, then we have
\begin{align*}
\frac{|\Psi_{s}(\cj{n})|}{|\Phi_{\al}(\cj{n})|} \frac{1}{ \jb{n_{11}}^{\s} \jb{n_4}^{\s} \jb{n_3}^{\s} }
  &\les \frac{1}{\jb{n_{\text{max}}}^{3\eps}   \jb{\max(|n_2|,|n_{12}|,|n_{13}|)}^{\nu+6\eps}}
\end{align*}
and we can proceed as in Case 1.1. If $|n_4|\ll n_{\text{max}}$, we only have two frequencies similar to $n_{\text{max}}$, so we need to proceed slightly differently.
We note that
\begin{align*}
\frac{\vert \Psi_s(\overline{n}) \vert }{\vert \Phi_{\al}(\overline{n}) \vert   } \frac{1}{\jb{n_{11}}^{\s }\jb{n_3}^\s  } \lesssim \frac{1}{\jb{n_{\max}}^{2\s+2\alpha-2s}  }= \frac{1}{\jb{n_{\max}}^{\nu}  },
\end{align*} 
where $\nu:=2\s+2\alpha-2s$.
We define
\begin{align*}
f:&=  \F_{t,x}^{-1}(\jb{n}^{\s} | \widehat{w}(\tau,n) | )\\
f_T:&= \F_{t,x}^{-1}( \jb{n}^{\s} | \mathcal{F}\{\chi_{[-T,0]}w \}(\tau,n)| )\\
g:&= \F_{t,x}^{-1}( \jb{n}^{-\eps} | \widehat{w}(\tau,n) |). 
\end{align*}

\noi
Given $\eps>0$, we define $\dl>0$ such that $\frac{\dl}{1+\dl}=4\eps$.
By using H\"older's inequality, Sobolev's inequality, the $L^{4}_{t,x}$-Strichartz inequality \eqref{Xembed2}, \eqref{Xembed1}, and Lemma \ref{LEM:sharp}, we have
\begin{align*}
&\intt_{\tau_1-\tau_2+\tau_3+\dots-\tau_6=0}\sum\limits_{n_4 \in \Z }\sum\limits_{ \substack{n_4=n_1-n_2+n_3\\ n_1=n_{11}-n_{12}+n_{13} }} \Big[ \F_{t,x}g(\tau_1,n_{12}) \F_{t,x}g(\tau_2,n_{13}) \F_{t,x}g(\tau_3,n_{2})\\
&\hphantom{XXXXXXX} \times \Big(\jb{n_{11}}^{-\frac{\nu}{2}} \F_{t,x}f(\tau_4,n_{11}) \Big) \Big( \jb{n_4}^{-\s+3\eps}  \F_{t,x}f(\tau_5,n_4) \Big) \Big(\jb{n_3}^{-\frac{\nu}{2}}  \F_{t,x} f_T(\tau_6,n_3)\Big) \Big] \\
&\lesssim \int_\mathbb{R} \Vert D^{-\frac{\nu}{2}}f \Vert_{L_x^{2+\frac{2}{\dl}}} \Vert D^{-\frac{\nu}{2}} f_T \Vert_{L_x^{2+\frac{2}{\dl}}} \Vert  ( D^{-\s +3\eps}f )g^{3}  \Vert_{L^{1+\dl}}dt\\
&\lesssim \int_{\mathbb{R}} \Vert D^{-\frac{\nu}{2} +\frac{1}{2(1+\dl)}  } f \Vert_{L_x^2} \Vert D^{-\frac{\nu}{2} +\frac{1}{2(1+\dl)}  } f_T \Vert_{L_x^2} \Vert D^{-\s +3\eps} f \Vert_{L_x^{\frac{4(1+\dl )}{1-3\dl }}}  \Vert g \Vert_{L_x^4}^3dt\\
&\lesssim \Vert f \Vert_{L_t^{\frac{1}{2\eps}} L_x^2 } \Vert f_T \Vert_{L_t^{\frac{1}{2\eps}} L_x^2 }  \Vert D^{-\s +3\eps +\frac{\dl}{1+\dl}} f \Vert_{L_t^{4+\frac{16\eps}{1-4\eps}}L_x^4 } \Vert g \Vert^3_{L_t^{4+\frac{16\eps}{1-4\eps}}L_x^4 } \\
&\lesssim \Vert w \Vert^2_{X^{\s,\frac{1}{2}- }} \Vert D^{-\s+4\eps+\frac{\dl}{1+\dl} }f \Vert_{X^{0,\frac{1}{2}-}} \Vert w \Vert_{X^{0,\frac{1}{2}-}}^3\\
&\lesssim \Vert w \Vert_{X^{\s,\frac{1}{2}-}}^3 \Vert w \Vert_{X^{0,\frac{1}{2}-}}^3.
\end{align*}

\noi
Here, we use the following conditions
\begin{align*}
-\s +4\eps+\frac{\dl}{1+\dl} \leq 0\quad \text{and} \quad -\frac{\nu}{2}+\frac{1}{2(1+\dl )}\leq 0.
\end{align*}
The first is justified by choosing $\eps>0$ so small so that $s\geq \frac{1}{2}+9\eps$ and the second follows from $\al\geq 1-\eps$.

\medskip
\noi
$\bullet$ \underline { \textbf{Subcase 2.2.2:} $|n_{11}|\sim  |n_{12}| \sim |n_1| $}

\medskip
\noi
With $|n_4|=\max(|n_{13}|,|n_2|,|n_4|)$, we have
\begin{align*}
\frac{|\Psi_{s}(\cj{n})|}{|\Phi_{\al}(\cj{n})|} \frac{1}{ \jb{n_{11}}^{\s} \jb{n_{12}}^{\s} \jb{n_3}^{\s} }
  &\les \frac{1}{\jb{n_{\text{max}}}^{3\eps} \jb{n_4}^{\nu+6\eps}}
\end{align*}

\noi
and hence we proceed as in Case 1.1.

\medskip
\noi
$\bullet$ \underline{\textbf{Case 3:} $|n_{\text{max}}^{(1)}|\ll |n_{\text{max}}|$}

\medskip
\noi
Since $|n_1| \les n_{\text{max}}^{(1)}$, we have $n_{\text{max}}\sim |n_4| \sim |n_2|$.
Suppose $|n_3|\sim n_{\text{max}}$. Then, we have 
\begin{align*}
\frac{|\Psi_{s}(\cj{n})|}{|\Phi_{\al}(\cj{n})|} \frac{1}{ \jb{n_4}^{\s} \jb{n_{2}}^{\s} \jb{n_3}^{\s}}
  &\les \frac{1}{\jb{n_{\text{max}}}^{3\eps}  \jb{n_{11}}^{\nu+6\eps} },
\end{align*}

\noi
and hence we proceed as in Case 1.1. Otherwise, we have $|n_3|\ll n_{\text{max}}$.
 In this case, without loss of generality, we assume $|n_{3}|=\max( |n_{11}|,|n_{12}|,|n_{13}|,|n_3|).$ 
 We note that
\begin{align*}
\frac{\vert \Psi_s(\overline{n}) \vert }{\vert \Phi_{\al}(\overline{n}) \vert   } \frac{1}{\jb{n_4}^{\s }\jb{n_2}^\s  } \lesssim \frac{1}{\jb{n_{\max}}^{2\s+2\alpha-2s}  }= \frac{1}{\jb{n_{\max}}^{\nu}  },
\end{align*} 
where $\nu:=2\s+2\alpha-2s$. Hence, we can proceed as in Subcase 2.2.1.

To finish the proof of \eqref{energyest1}, it remains to estimate the boundary term $\N_{0}$ in \eqref{intebypart}. Fix $t\in \R$. We denote by $n_{(j)}$ the $j$-th largest frequency among $(n_1,n_2,n_3,n_4)$.
\medskip

\noi
$\bullet$ \underline{ \textbf{Case 1:} $|n_{(1)}|\sim |n_{(2)}|\sim |n_{(3)}|\ges |n_{(4)}|$}

\medskip
\noi

Without loss of generality, we may assume $|n_1|\sim|n_3|\sim|n_2|$.
We define $f(n)=\jb{n}^{\s}|\ft u(t,n)|$ and using Lemma~\ref{LEM:mult}, we have 
\begin{align*}
|\N_0(u)(t)|& \les \sum_{n_4\in\Z}\sum_{  \substack{n_4=n_1-n_2+n_3 \\ n_1,n_3\neq n_4}  } \frac{1}{\jb{n_{\text{max}}}^{\nu}} f(n_1)f(n_2)f(n_3)|\ft u(t, n_4)| \\
& \les \bigg(\sum_{n_4\in\Z}\sum_{  n_4=n_1-n_2+n_3 }  \frac{ f(n_1)^{2}}{\jb{n_3}^{\nu}} |\ft u(t,n_4)|^2 \bigg)^{\frac{1}{2}}\bigg(\sum_{n_4\in\Z}\sum_{  n_4=n_1-n_2+n_3 }  \frac{f(n_2)^{2} }{\jb{n_1}^{\nu}} f(n_3)^{2}\bigg)^{\frac{1}{2}} \\
&\les \|u(t)\|_{L^2}\|u(t)\|_{H^{\s}}^{3},
\end{align*}
where $\nu:=3\s+2\al-2s$ and the sums above converge, since if $s>\frac{5}{2}-2\al+3\eps$, we ensure that 
$\nu>1$, for $\s=s-\frac{1}{2}-\eps$.

\medskip
\noi
$\bullet$ \underline {\textbf{Case 2:} $|n_{(1)}|\sim |n_{(2)}|\gg |n_{(3)}|\ges  |n_{(4)}|$}

\medskip
\noi

Without loss of generality, we may assume $|n_1|\sim |n_4|\gg |n_3| \geq |n_2|$. 
From Lemma~\ref{LEM:mult}, we have 
\begin{align*}
|\N_0(u)(t)|& \les \sum_{n_4\in\Z}\sum_{  \substack{n_4=n_1-n_2+n_3 \\ n_1,n_3\neq n_4}  } \frac{1}{\jb{n_{\text{max}}}^{\nu}}\frac{1}{\jb{n_3}^{\s}} f(n_1)|\ft u (t,n_2)|f(n_3) f(n_4),
\end{align*}
where $\nu:=2\s+2\al-2s$ which is positive as $\al>\frac{1}{2}$. By Cauchy-Schwarz inequality, we bound the above by
\begin{align*}
& \bigg(\sum_{n_4\in\Z}\sum_{  n_4=n_1-n_2+n_3 } f(n_3)^{2}  f(n_1)^2  f(n_4)^2 \bigg)^{\frac{1}{2}}\bigg(\sum_{n_4\in\Z}\sum_{  n_4=n_1-n_2+n_3 }  \frac{|\ft u(t,n_2)|^2 }{\jb{n_{\text{max}}}^{2\nu}\jb{n_3}^{2\s}}   \bigg)^{\frac{1}{2}} \\
& \les \|u(t)\|_{L^2}\|u(t)\|_{H^\s}^{3} \bigg( \sum_{n_1, n_3} \frac{1}{\jb{n_1}^{1+\eps}\jb{n_3}^{1+\eps}}\frac{1}{\jb{n_1}^{2\nu+2\s-2-2\eps}}\bigg)^{\frac{1}{2}}
\end{align*}
and the above sums converge since $s>\frac{5}{2}-2\al+3\eps$ implies $\nu+\s-1-\eps\geq 0$. We also used that $s \le 1+ \frac {3\eps}{2}$ so the condition $-2\s+1+\eps \ge 0$ is satisfied.

Now, we prove $F(u_N)\to F(u)$ as $N \to \infty$. 
We set $w_N = v - v_N$. From \eqref{sym1}, we have
\begin{align*}
&\big|F(u)-F(u_N)\big|\\
&= \bigg|  \frac{1}{4} \Im \int_{-T}^{0} \sum_{n_4\in\Z}\sum_{  \substack{n_4=n_1-n_2+n_3 \\ n_1,n_3\neq n_4}  }\Psi_{s}(\cj{n}) e^{it\Phi_{\al}(\cj{n})} \bigg(\ft w_N(t,n_1)\cj{ \ft v}(t,n_2)\ft v(t,n_3) \cj{ \ft v}(t,n_4)\\
&\hphantom{XXX} +\ft v_N(t,n_1)\cj{ \ft w_N}(t,n_2)\ft v(t,n_3) \cj{ \ft v}(t,n_4) +\ft v_N(t,n_1)\cj{ \ft v_N}(t,n_2)\ft w_N(t,n_3) \cj{ \ft v}(t,n_4)\\
&\hphantom{XXX}+ \ft v_N(t,n_1)\cj{ \ft v_N}(t,n_2)\ft v_N(t,n_3) \cj{ \ft w_N}(t,n_4)       \bigg) dt      \bigg|.
\end{align*}

\noi
Note that
\begin{align*}
&e^{it|n_1|^{2\al} }\dt \ft {w}_N(t,n_1)\\
&=\sum_{n_1=n_{11}-n_{12}+n_{13}} \bigg( \Big(\ft u(t,n_{11})-\ft u_N(t,n_{11}) \Big)   \cj{ \ft u}(t,n_{12}) \ft u(t,n_{13})\\
& +  \ft u_N(t,n_{11})  \cj{ \Big(\ft u(t,n_{12})-\ft u_N(t,n_{12}) \Big) }   \ft u(t,n_{13}) +  \ft u_N(t,n_{11}) \cj{ \ft u_N(t,n_{12}) }   \Big(\ft u(t,n_{13})-\ft u_N(t,n_{13}) \Big)      \bigg).
\end{align*}

\noi
Therefore, by proceeding as in the proof of \eqref{energyest1} and from Lemma \ref{LEM:Superapprox}, we have
\begin{align*}
\big| F(u)-F(u_N)  \big|&\les 
\sup_{t\in [0,T]}\big(  \|u(t)\|_{H^{\s}}+\| u_N  (t)\|_{H^{\s}} \big)^3 \| u(t)-u_N(t)\|_{H^{\s}}\\
&\hphantom{XX} +\big(  \|u\|_{X_T^{\s,\frac 12+}}+\| u_N \|_{X_T^{\s,\frac 12+}} \big)^5 \| u-u_N\|_{X_T^{\s,\frac 12+}}  \too 0
\end{align*}

\noi
as $N\to \infty$. This completes the proof of Lemma \ref{LEM:energy}.  
\end{proof}

\section{Uniform $L^p$-integrability of the Radon-Nikodym derivative}\label{SEC:Lp}

In this section, we prove the uniform $L^p$-integrability of the Radon-Nikodym derivative $f_N(t,\cdot)$ in \eqref{R1}.  
It follows from \eqref{R1} and Lemma \ref{LEM:energy} that we have
\begin{align}
\int   |f_N(t,\phi)|^p \mu_{s,r}(d\phi)
&= \int  \chi_{ \{ \| \phi \|_{L^2} \le r \}}  |f_N(t,\phi)|^p \mu_s(d\phi) \notag\\
&\le \int \chi_{  \{ \| \phi \|_{L^2} \le r  \}   }\exp(C  \| \phi_N  \|_{H^\s}^{3})   \mu_s(d\phi), \label{expbound}
\end{align}

\noi
where $C=C(t,p,r)$ depends only on $t,p$, and $r$. 
Therefore, to show the uniform $L^p$-integrability, it suffices to prove the following lemma.

\begin{lemma}\label{LEM:un1} Let $\frac{1}{2}<s\leq 1$, $\s=s-\frac{1}{2}-\eps$ for small $\eps>0$ and $q\geq 1$ such that $\s q<s$. Then, for any $r > 0$,  we have the following exponential integrability 
\begin{align}
\sup_{N \in \NN}  \E_{\mu_s} \Big[ \chi_{\{\int_{\T} |\phi_N|^2 dx \, \le r\}}   e^{ ( \int_\T  |D^\s \phi_N |^2 dx)^q  } \Big] < \infty. 
\label{Gibbs4}
\end{align}
	
\end{lemma}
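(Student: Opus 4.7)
The plan is to establish the sub-Gaussian type tail estimate
\begin{equation*}
\mu_{s}\!\left(X_{N} > K,\ \|\phi_{N}\|_{L^{2}}^{2} \le r\right) \les \exp\!\left(-c K^{s/\s}\right),
\end{equation*}
where $X_{N} := \int_{\T} |D^{\s}\phi_{N}|^{2}\,dx$, uniformly in $N \in \NB$, and then integrate against $e^{X_{N}^{q}}$. Since the hypothesis $\s q < s$ is equivalent to $q < s/\s$, the exponent $K^{s/\s}$ will dominate $\log t$ after a layer-cake change of variable, giving integrability with a bound independent of $N$.

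I would begin by diagonalizing in Fourier. Under $\mu_{s}$, $\ft\phi(n) = g_{n}/\jb{n}^{s}$ with $\{g_{n}\}$ i.i.d.\ standard complex Gaussians, so, up to harmless constants,
\begin{equation*}
X_{N} = \sum_{0 < |n| \le N} a_{n}|g_{n}|^{2}, \qquad a_{n} := \frac{|n|^{2\s}}{\jb{n}^{2s}} \les \jb{n}^{-1-2\eps},
\end{equation*}
since $2(s-\s) = 1 + 2\eps$. Grouping frequencies dyadically with $c_{M} := M^{-1-2\eps}$ and $Y_{M} := \sum_{|n|\sim M,\,|n|\le N}|g_{n}|^{2}$ yields
\begin{equation*}
X_{N} \les \sum_{M \le N} c_{M}Y_{M}, \qquad \|\phi_{N}\|_{L^{2}}^{2} \ges \sum_{M \le N} M^{-2s} Y_{M}.
\end{equation*}
The $L^{2}$-cutoff $\|\phi_{N}\|_{L^{2}}^{2} \le r$ then forces the \emph{deterministic} bound $Y_{M} \les r M^{2s}$ for every dyadic $M \le N$.

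The main step is to split $X_{N}$ at a threshold $M^{*}$ defined by $(M^{*})^{2s-1-2\eps} \sim K/r$, i.e.\ $M^{*} \sim (K/r)^{1/(2s-1-2\eps)}$. The low-frequency block is controlled by the deterministic $L^{2}$-bound:
\begin{equation*}
\sum_{M \le M^{*}} c_{M} Y_{M} \les r \sum_{M \le M^{*}} M^{2s-1-2\eps} \les r (M^{*})^{2s-1-2\eps} \le K/2.
\end{equation*}
The high-frequency block $B := \sum_{M > M^{*}} c_{M} Y_{M}$ is a diagonal quadratic form in independent complex Gaussians whose coefficient sequence $\{a_{n}\}_{|n|>M^{*}}$ satisfies $\|a\|_{\ell^{2}}^{2} \les (M^{*})^{-1-4\eps}$ and $\|a\|_{\ell^{\infty}} \les (M^{*})^{-1-2\eps}$, with $\E[B] \les (M^{*})^{-2\eps} \ll K$. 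The Hanson-Wright concentration inequality then gives, for $K$ large,
\begin{equation*}
\mu_{s}(B > K/2) \le 2\exp\!\big(-c \min( K^{2}(M^{*})^{1+4\eps},\ K(M^{*})^{1+2\eps})\big) \le 2\exp\!\big(-c K (M^{*})^{1+2\eps}\big),
\end{equation*}
where the linear regime dominates. Substituting $M^{*}$ and using $2\s = 2s - 1 - 2\eps$,
\begin{equation*}
K (M^{*})^{1+2\eps} \ges K^{\,1 + (1+2\eps)/(2s-1-2\eps)} = K^{2s/(2s-1-2\eps)} = K^{s/\s},
\end{equation*}
which yields the desired tail.

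The last step is a layer-cake integration:
\begin{equation*}
\E_{\mu_{s}}\!\left[\chi\, e^{X_{N}^{q}}\right] \le 1 + C\int_{1}^{\infty} \exp\!\big(-c (\log t)^{s/(q\s)}\big)\, dt = 1 + C\int_{0}^{\infty} \exp\!\big(u - c u^{s/(q\s)}\big)\, du,
\end{equation*}
which is finite since $s/(q\s) > 1$, and the bound is uniform in $N$ because $\|a\|_{\ell^{2}}$ and $\|a\|_{\ell^{\infty}}$ are controlled independently of $N$. The main technical point, and the reason the condition $\s q < s$ is sharp in this argument, is the precise matching of the split at $M^{*}$ with the Hanson-Wright exponent, which is exactly what produces $K^{s/\s}$; the use of complex rather than real Gaussians is harmless, since writing $|g_{n}|^{2}$ as a sum of squares of two independent real $N(0,1/2)$ variables reduces to the standard real-Gaussian Hanson-Wright bound. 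A variational alternative in the spirit of Barashkov-Gubinelli, applying the Bou\'e-Dupuis formula to a regularized potential $X_{N}^{q} - \ld \|\phi_{N}\|_{L^{2}}^{2k}$ for suitable $\ld, k$, would also work, but the direct concentration route above appears cleaner.
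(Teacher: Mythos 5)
Your proof is correct, but it follows a genuinely different route from the paper. The paper verifies \eqref{Gibbs4} via the Bou\'e--Dupuis variational formula in the style of Barashkov--Gubinelli: the $L^2$-cutoff is first replaced by the exponential weight $\chi_{\{|x|\le K\}}\le e^{-A|x|^\g}e^{AK^\g}$, one then bounds $-\log \E[e^{-R_N}]$ from below uniformly by absorbing the cross terms $\int \Re(\cj{D^\s Y_N}D^\s\Dr_N)$ and the drift's $H^\s$-norm into the positive terms $\tfrac{A}{4}\|\Dr_N\|_{L^2}^{2\g}+\tfrac12\int_0^1\|\dr\|_{L^2}^2dt$ via interpolation and Young's inequality, with the exponent $\g=\frac{q(1+2\eps)}{q(1+2\eps)-2s(q-1)}$ chosen exactly so that the hypothesis $\s q<s$ makes the Young exponents admissible. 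You instead prove the quantitative tail bound $\mu_s(X_N>K,\ \|\phi_N\|_{L^2}^2\le r)\les \exp(-cK^{s/\s})$ by a dyadic splitting at $M^*\sim (K/r)^{1/(2\s)}$ --- the cutoff deterministically controls the low frequencies, Hanson--Wright controls the high-frequency diagonal quadratic form, and the linear regime of Hanson--Wright produces precisely the exponent $s/\s$ --- followed by a layer-cake integration, where $\s q<s$ is exactly the convergence condition. All the exponent arithmetic checks out ($1+\frac{1+2\eps}{2s-1-2\eps}=\frac{s}{\s}$), the reduction of complex to real Gaussians is handled, and the uniformity in $N$ is clear since the coefficient bounds only improve upon restricting to $|n|\le N$. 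Your route is essentially a modernized version of the dyadic pigeonhole argument of Bourgain and Debussche--Tsutsumi, which the paper explicitly cites as the alternative it chose not to follow; its advantage is an explicit sub-Gaussian-type tail that makes the sharpness of $\s q<s$ transparent, while the paper's variational argument avoids concentration inequalities altogether, treats all $L^p$ moments by trivial rescaling, and is more robust in non-Gaussian or interacting settings.
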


Note that given $\frac{1}{2}<s\leq 1$, \eqref{Gibbs4} holds for $q=\frac{3}{2}$. This implies the uniform boundedness of \eqref{expbound} and hence completes the proof of Proposition~\ref{PROP:den}.
A similar, slightly more general statement (replacing the $H^{\s}$-norm in \eqref{Gibbs4} with a Besov norm) was proved in \cite[Lemma 2.2 and Corollary 2.3]{DT2020}, adapting arguments in~\cite{Bo94}.
Our proof of Lemma \ref{LEM:un1} is based on the variational approach
due to Barashkov and Gubinelli~\cite{BG}. 
More precisely, we will rely on 
the Bou\'e-Dupuis variational formula \cite{BD, Ust}; see Lemma~\ref{LEM:var3}.

\subsection{Variational formulation}
\label{SUBSEC:var}

In order to  prove \eqref{Gibbs4}, 
we use a variational formula for the partition function
as in \cite{TW, OOT, OST20}.
Let us first introduce some notations.
Let $W(t)$ be a cylindrical Brownian motion in $L^2(\T)$.
Namely, we have
\begin{align*}
W(t) = \sum_{n \in \Z} B_n(t) e^{inx},
\end{align*}

\noi
where  
$\{B_n\}_{n \in \Z}$ is a sequence of mutually independent complex-valued\footnote{By convention, we normalize $B_n$ such that $\text{Var}(B_n(t)) = t$. In particular, $B_0$ is  a standard real-valued Brownian motion.} Brownian motions.
Then, define a centered Gaussian process $Y(t)$
by 
\begin{align}
Y(t)
=  \jb{\nabla}^{-s}W(t).
\label{P2}
\end{align}

\noi
Note that 
we have $\Law_\PP(Y(1)) = \mu_s$, 
where $\mu_s$ is the Gaussian measure in \eqref{gauss1}.
By setting  $Y_N = \pi_NY $, 
we have   $\Law_\PP(Y_N(1)) = (\pi_N)_*\mu_s$, 
i.e.~the push-forward of $\mu_s$ under $\pi_N$.
In particular, a typical function $u$ in the support of $\mu_s$ belongs to $L^\infty(\T)$ when $s>\frac 12$.

Next, let $\Ha$ denote the space of drifts, 
which are progressively measurable processes 
belonging to 
$L^2([0,1]; L^2(\T))$, $\PP$-almost surely. 
We now state the  Bou\'e-Dupuis variational formula \cite{BD, Ust};
in particular, see Theorem 7 in \cite{Ust}.

\begin{lemma}\label{LEM:var3}
	Let $Y$ be as in \eqref{P2}.
	Fix $N \in \NN$.
	Suppose that  $F:C^\infty(\T^d) \to \R$
	is measurable such that $\E\big[|F(\pi_NY(1))|^p\big] < \infty$
	and $\E\big[|e^{-F(\pi_NY(1))}|^q \big] < \infty$ for some $1 < p, q < \infty$ with $\frac 1p + \frac 1q = 1$.
	Then, we have
	\begin{align*}
	- \log \E\Big[e^{-F(\pi_N Y(1))}\Big]
	= \inf_{\dr \in \mathbb H_a}
	\E\bigg[ F(\pi_N Y(1) + \pi_N I(\dr)(1)) + \frac{1}{2} \int_0^1 \| \dr(t) \|_{L^2_x}^2 dt \bigg], 
	\end{align*}

\noi
where  $I(\dr)$ is  defined by 
\begin{align*}
I(\dr)(t) = \int_0^t \jb{\nabla}^{-s} \dr(t') dt'
\end{align*}

\noi
and the expectation $\E = \E_\PP$
is an 
expectation with respect to the underlying probability measure~$\PP$.

\end{lemma}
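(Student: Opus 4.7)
The plan is to recognize this statement as the classical Boué--Dupuis variational formula, for which the authors cite \cite{BD, Ust} (in particular Theorem~7 of \cite{Ust}); I would therefore outline the two matching inequalities rather than reprove the full stochastic analysis from scratch. The cylindrical Brownian motion $W$ generates a Brownian filtration $\{\mathcal F_t\}$, and the key input is that every absolutely continuous probability measure on this filtration arises from a Girsanov transformation driven by some $\dr \in \Ha$. The frequency cutoff $\pi_N$ plays no role in the proof of the identity itself; it only matters later for ensuring the hypotheses on $F$ can be verified.

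For the upper bound $\leq$, I would fix $\dr \in \Ha$, set $M_t = \exp\bigl(\int_0^t \dr(s) \cdot dW(s) - \tfrac{1}{2}\int_0^t \|\dr(s)\|_{L^2_x}^2 ds\bigr)$, and use a standard localization argument (since $\dr \in L^2([0,1];L^2_x)$ $\PP$-a.s.\ but need not satisfy Novikov's condition globally) to produce the probability measure $dQ = M_1\, d\PP$. By Girsanov, $\widetilde W(t) = W(t) - \int_0^t \dr(s)\, ds$ is a cylindrical Brownian motion under $Q$, so under $Q$ the process $\jb{\nabla}^{-s}W(1)$ has the same law as $Y(1) + I(\dr)(1)$ under $\PP$. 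Jensen's inequality applied to $-\log$ against $Q$ gives
\begin{equation*}
- \log \E\bigl[ e^{-F(\pi_N Y(1))} \bigr] \leq \E_Q\bigl[ F(\pi_N Y(1)) + \log(dQ/d\PP) \bigr],
\end{equation*}
and the usual computation $\E_Q[\log(dQ/d\PP)] = \tfrac{1}{2} \E_Q\bigl[ \int_0^1 \|\dr\|_{L^2_x}^2 dt\bigr]$ (the stochastic integral has mean zero after expressing it in terms of $d\widetilde W$) yields the upper bound after transporting through the law identity.

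For the matching lower bound $\geq$, I would exhibit a minimizing drift. Let $Z = e^{-F(\pi_N Y(1))}/\E[e^{-F(\pi_N Y(1))}]$ and consider the positive $\PP$-martingale $Z_t = \E[Z \mid \mathcal F_t]$. By the martingale representation theorem on the Brownian filtration, there exists a predictable process $\dr^{\ast}$ with $dZ_t = - Z_t\, \dr^\ast(t) \cdot dW(t)$, i.e.\ $Z = \exp\bigl( - \int_0^1 \dr^\ast \cdot dW - \tfrac{1}{2}\int_0^1 \|\dr^\ast\|_{L^2_x}^2 dt \bigr)$. Plugging $\dr^\ast$ back into the computation above turns every application of Jensen into an equality, so $\dr^\ast$ attains the infimum, and the two inequalities together give the claimed identity.

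The main obstacle is the technical verification that $\dr^\ast \in \Ha$ and that all stochastic integrals are genuinely integrable; this is exactly where the dual integrability hypotheses $\E|F(\pi_N Y(1))|^p < \infty$ and $\E|e^{-F(\pi_N Y(1))}|^q < \infty$ are used. They are equivalent, via the Donsker--Varadhan duality between relative entropy and log-Laplace transforms, to the statement that the optimal tilted measure $Q^\ast = Z\, d\PP$ has finite relative entropy with respect to $\PP$, which is precisely the finiteness of $\tfrac{1}{2}\E_{Q^\ast}[\int_0^1 \|\dr^\ast\|_{L^2_x}^2 dt]$. Once this is in place, the Girsanov computations on both sides are legitimate and the identity follows.
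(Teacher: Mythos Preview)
Your outline is correct and follows the standard route to the Bou\'e--Dupuis formula: the upper bound via Girsanov and Jensen, the lower bound via martingale representation to construct the optimal drift, and the use of the dual integrability hypotheses to control the relative entropy and justify the stochastic calculus. This is essentially the argument in the references the paper cites.

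However, the paper itself does not prove this lemma at all. It is stated as a black box with the attribution ``We now state the Bou\'e--Dupuis variational formula \cite{BD, Ust}; in particular, see Theorem 7 in \cite{Ust}.'' So there is no proof in the paper to compare against; your proposal supplies substantially more detail than the paper does. If the intent of the exercise is to match the paper's treatment, the appropriate response is simply to cite \cite{BD} and \cite{Ust} (Theorem~7) and move on.
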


Before proceeding to the proof of Lemma~\ref{LEM:un1}, 
we state a lemma on the pathwise regularity bounds  of 
$Y(1)$ and $I(\dr)(1)$.

\begin{lemma}  \label{LEM:Dr}
	
\textup{(i)} 
For any finite $p \ge 1$, we have 
\begin{align}
\E \Big[  \| Y_N(1)\|_{W^{\s,\infty} }^p \Big] \leq C_p <\infty,
\label{u1}
\end{align}

\noi
uniformly in $N \in \NN$.
	
\smallskip
	
\noi
\textup{(ii)} For any $\dr \in \Ha$, we have
\begin{align}
\| I(\dr)(1) \|_{H^{s}}^2 \leq \int_0^1 \| \dr(t) \|_{L^2}^2dt.
\label{CM}
\end{align}
\end{lemma}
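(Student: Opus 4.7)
\medskip

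\noindent
\textbf{Proof plan for Lemma~\ref{LEM:Dr}.}

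For part (ii), I would proceed directly. Applying $\jb{\nabla}^s$ to the definition of $I(\dr)(1)$ and commuting it with the time integral yields
\begin{align*}
\jb{\nabla}^s I(\dr)(1) = \int_0^1 \dr(t')\, dt',
\end{align*}
so that $\| I(\dr)(1)\|_{H^s}^2 = \bigl\| \int_0^1 \dr(t')\, dt'\bigr\|_{L^2}^2$. The Cauchy--Schwarz inequality (in $t'$) immediately gives
\begin{align*}
\bigl\| \textstyle\int_0^1 \dr(t')\, dt'\bigr\|_{L^2}^2 \leq \int_0^1 \| \dr(t')\|_{L^2}^2\, dt',
\end{align*}
which is \eqref{CM}. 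This step is entirely routine.

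For part (i), the plan is to combine pointwise Gaussian moment estimates with a small Sobolev embedding loss and then use hypercontractivity. Note first that, for fixed $x \in \T$, the random variable
\begin{align*}
(D^{\sigma'} Y_N(1))(x) = \sum_{|n|\leq N} \frac{|n|^{\sigma'}}{\jb{n}^{s}} B_n(1) e^{inx}
\end{align*}
is a centered complex Gaussian whose variance is bounded by $\sum_{n\in\Z}\jb{n}^{2\sigma' - 2s}$, which is finite, uniformly in $N$ and $x$, provided $\sigma' < s - \tfrac{1}{2}$. Recalling $\s = s - \tfrac{1}{2} - \eps$, we have slack of size $\eps$: I would fix $r \in [2, \infty)$ with $r > 1/\eps$ and take $\sigma' := \s + \tfrac{1}{r} < s - \tfrac{1}{2}$.

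The next step is to upgrade this fixed-$x$ information to an $L^\infty_x$ bound via Sobolev embedding $W^{1/r, r}(\T) \hookrightarrow L^\infty(\T)$, which yields $\|f\|_{W^{\s,\infty}} \les \|f\|_{W^{\s + 1/r, r}}$. Using Fubini to swap the $x$ and $\omega$ integrals together with the pointwise Gaussian moment bound $\E[|(D^{\sigma'} Y_N(1))(x)|^r] \les r^{r/2}$, I would conclude
\begin{align*}
\E\bigl[\|Y_N(1)\|_{W^{\s,\infty}}^r\bigr] \les \E\bigl[\|D^{\sigma'} Y_N(1)\|_{L^r_x}^r\bigr] = \int_\T \E\bigl[|(D^{\sigma'} Y_N(1))(x)|^r\bigr]\, dx \les r^{r/2},
\end{align*}
uniformly in $N$. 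Finally, Gaussian hypercontractivity (equivalence of $L^p$ norms on a fixed Wiener chaos) promotes this uniform $L^r$ bound to a uniform $L^p$ bound for every $p \geq 1$, giving \eqref{u1}.

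The only mildly delicate point is ensuring that the Sobolev loss $1/r$ can be absorbed into the regularity gap $\eps$ built into the definition $\s = s - \tfrac{1}{2} - \eps$; this is precisely why the statement is formulated with $\s < s - \tfrac{1}{2}$ rather than at the borderline. No step involves serious obstacles.
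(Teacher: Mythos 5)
Your overall strategy is the same as the paper's: for (i), reduce the $L^\infty_x$ norm to an $L^r_x$ norm via Sobolev embedding at the cost of a small derivative loss absorbed by the gap $\eps$ in $\s=s-\frac12-\eps$, use the pointwise Gaussian moment bounds, and interchange the $x$- and $\o$-integrations (the paper uses Minkowski's integral inequality for $p\geq r$ where you use Fubini at $p=r$ followed by hypercontractivity; this is a cosmetic difference), and for (ii) the direct Cauchy--Schwarz computation is exactly what is behind the paper's citation. There is, however, one technical slip in (i): the embedding $W^{1/r,r}(\T)\hookrightarrow L^\infty(\T)$ that you invoke is the \emph{critical} case $\frac{1}{r}\cdot r=1=d$ and is false (e.g.\ $H^{1/2}(\T)\not\hookrightarrow L^\infty(\T)$); one needs strictly more than $1/r$ derivatives. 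The fix is immediate and costs nothing: take $\sigma'=\s+\frac{2}{r}$ with $r>2/\eps$ (or any supercritical loss $\eta>1/r$, as the paper does), which still satisfies $\sigma'<s-\frac12$, and the rest of your argument goes through unchanged.
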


\begin{proof}
It follows from the Sobolev embedding theorem that for any sufficiently small $0<\eta\ll 1$, there exists finite $r=r(\eta)>0$ such that
\begin{align*}
\| u \|_{W^{s,\infty}(\T) }\les \| u \|_{W^{s+\eta,r}( \T )}.
\end{align*}

\noi
Here, we reduce the $r=\infty$ case to the case of large but finite $r$ by paying the expense of a slight loss of spatial derivative. 
Hence, we assume $r<\infty$ in the following.

Let $p \geq r$.
Note that  
\begin{align*}
\jb{\nabla}^{\s+\frac \eps 2}Y_N(1)=\sum_{|n|\le N} \frac{B_n(1)}{\jb{n}^{\frac 12 +\frac \eps 2 }}e^{inx}.
\end{align*}

\noi
Then, one can see that $\jb{\nabla}^{\s+\frac \eps 2}Y_N(1)$ is a mean zero Gaussian random variable with variance $\| \jb{n}^{-\frac{1}{2}-\frac{\eps}{2}}\chi_{|n|\leq N}\|_{\l^2_{n}}^{2}$.
Hence, there exists a constant $C>0$ such that 
\begin{align}
\|  \jb{\nb}^{\s+\frac \eps 2}Y_N(1)  \|_{L^p_\o }\le Cp^{\frac 12} \| \jb{n}^{-\frac{1}{2}-\frac{\eps}{2}}\chi_{|n|\leq N}\|_{\l^2_{n}}. 
\label{w1}
\end{align} 

\noi
Then, from Minkowski's inequality and \eqref{w1}, we have
\begin{align*}
\big(\E \| Y_N(1) \|_{W_x^{\s+\frac \eps 2,r} }^p \big)^{\frac 1p} &\le  \big\|  \|   \jb{\nb}^{\s+\frac \eps 2}Y_N(1)     \|_{L^p_\o} \big\|_{L_x^r} \le Cp^{\frac 12}\big( \sum_{ | n | \le  N}\frac 1{\jb{n}^{1+\eps}}  \big)^\frac 12 \le \wt{C}p^{\frac 12}. 
\end{align*}

\noi
This proves \eqref{u1}.

As for (ii), 
the estimate \eqref{CM}
follows  from Minkowski's and Cauchy-Schwarz' inequalities. 
See the proof of Lemma 4.7 in \cite{GOTW}.

\end{proof}

\subsection{Uniform exponential integrability}

In this section, we present the proof of Lemma \ref{LEM:un1}. 
Since the argument is identical for any finite $p \geq 1$, we only present details for the case $p =1$. 
Note that
\begin{align}
\chi_{\{|\,\cdot \,| \le K\}}(x) \le \exp\big( -  A |x|^\gamma\big) \exp(A K^\g)
\label{H6}
\end{align}

\noi
for any $K, A , \g > 0$.
Given $N \in \NN$ and $A\gg 1$ sufficiently large, we define 
\begin{align}
\begin{split}
R_N (u)
&=  \bigg(  \int_{\T}   |D^\s u_N |^2  dx \bigg)^q
- A \, \bigg( \int_{\T} | u_N|^2  dx\bigg)^\g 
\end{split}
\label{K2}
\end{align}

\noi
for some $\g> q$ which will be determined later (see \eqref{ga}). 
Then, the following uniform exponential bound \eqref{u2} with \eqref{H6}
\begin{equation}
\sup_{N\in \NN} \Big\| 
e^{R_N(u)}\Big\|_{L^p( \mu_s)}
\leq C_{p,A,\g}  < \infty
\label{u2}
\end{equation}

\noi
implies the uniform exponential integrability \eqref{Gibbs4}. Hence, it remains to prove the uniform exponential integrability \eqref{u2}.
In view of the Bou\'e-Dupuis formula (Lemma \ref{LEM:var3}), 
it suffices to  establish a  lower bound on 
\begin{equation}
\W_N(\dr) = \E
\bigg[-R_N(Y(1) + I(\dr)(1)) + \frac{1}{2} \int_0^1 \| \dr(t) \|_{L^2_x}^2 dt \bigg], 
\label{v_N0}
\end{equation}

\noi 
uniformly in $N \in \NN$ and  $\dr \in \Ha$.
We  set $Y_N = \pi_N Y = \pi_N Y(1)$ and $\Dr_N = \pi_N  \Dr = \pi_N I(\dr)(1)$.

From \eqref{K2}, we have
\begin{align}
\begin{split}
R_N (Y + \Dr)  & = 
\bigg(\int_{\T}   |D^\s Y_N|^2  dx
+ 2\int_{\T}  \Re ( \cj{D^\s Y_N}  D^\s \Dr_N )  dx+ \int_{\T}     |D^\s \Dr_N|^2 dx \bigg)^q
\\
&\hphantom{X}
-  A \bigg\{ \int_{\T} \Big(  |Y_N|^2  + 2 \Re(\cj{Y_N} \Dr_N) + |\Dr_N|^2 \Big) dx \bigg\}^\g. 
\end{split}
\label{Y0}
\end{align}

\noi
Hence, from  \eqref{v_N0} and \eqref{Y0}, we have
\begin{align}
\begin{split}
\W_N(\dr)
&=\E
\bigg[
-\bigg(\int_{\T}   |D^\s Y_N|^2  dx
+ 2\int_{\T}  \Re (\cj{D^\s Y_N}  D^\s \Dr_N )  dx+ \int_{\T}     |D^\s \Dr_N|^2 dx \bigg)^q
\\
&\hphantom{XXX}
+A \bigg\{ \int_{\T} \Big(  |Y_N|^2  + 2 \Re( \cj{Y_N} \Dr_N) + |\Dr_N|^2 \Big) dx \bigg\}^\g
+ \frac{1}{2} \int_0^1 \| \dr(t) \|_{L^2_x}^2 dt 
\bigg].
\end{split}
\label{v_N0a}
\end{align}

We first state 
a lemma, controlling the terms appearing in \eqref{v_N0a}.
We present the proof of this lemma at the end of this section.

\begin{lemma} \label{LEM:Dr2}
	
	Let $s$, $\s$ and $q$ be as given in Lemma~\ref{LEM:un1}. Then, we have the following:
	
	\noi
\textup{(i)}
There exists $c >0$ such that
\begin{align}
\|  \Dr_N \|_{H^\s }^{2q}
&\le \frac 1{100}
\| \Dr_N \|_{H^s}^2 + \frac A{100} \| \Dr_N \|_{L^2}^{ \frac{2q(1+2\eps)}{q(1+2\eps) -2s(q-1)}},
\label{YY3}\\
\bigg(\int_{\T} \big| \cj{ D^\s Y_N} D^\s \Dr_N \big| dx\bigg)^q 
&\le c \|  Y_N  \|_{W^{\s,\infty}  }^{2q} 
+ \frac 1{200} 
\| \Dr_N \|_{H^s}^2+\frac {A}{200} 
\| \Dr_N \|_{L^2}^{ \frac{2q(1+2\eps)}{q(1+2\eps) -2s(q-1)}}
\label{YY2} 
\end{align}

\noi  
for any sufficiently large $A>0$, uniformly in $N \in \NN$.

\smallskip

\noi
\textup{(ii)}	
Let $A, \g > 0$. Then,  
there exists $c = c(A,\g)>0$ such that
\begin{align}
\begin{split}
A \bigg\{ \int_{\T}& \Big(  |Y_N|^2  + 2 \Re( \cj{Y_N} \Dr_N) + |\Dr_N|^2 \Big) dx \bigg\}^\g \ge \frac A4 \| \Dr_N \|_{L^2}^{2\g} 
- c \| Y_N \|_{L^\infty }^{2\g}, 
\end{split}
\label{YY5}
\end{align}

\noi
uniformly in $N \in \NN$.

\end{lemma}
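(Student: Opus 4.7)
The plan is to reduce each inequality to Sobolev interpolation and Young's inequality; no information beyond the integral/pointwise identities and the hypothesis $q\sigma < s$ is needed.

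I would dispose of (ii) first, since it is purely algebraic. The key is the pointwise identity
\begin{align*}
|Y_N|^2 + 2\Re(\overline{Y_N}\Delta_N) + |\Delta_N|^2 = |Y_N + \Delta_N|^2,
\end{align*}
so that the quantity on the left-hand side of \eqref{YY5} equals $A\,\|Y_N+\Delta_N\|_{L^2}^{2\gamma}$. Using the reverse triangle inequality $\|\Delta_N\|_{L^2} \leq \|Y_N+\Delta_N\|_{L^2} + \|Y_N\|_{L^2}$, I would split into two regimes: if $\|\Delta_N\|_{L^2} \geq C_\gamma \|Y_N\|_{L^2}$, with $C_\gamma$ chosen so that $(1-C_\gamma^{-1})^{2\gamma} \geq \tfrac{1}{4}$, we directly obtain $\|Y_N+\Delta_N\|_{L^2}^{2\gamma} \geq \tfrac14\|\Delta_N\|_{L^2}^{2\gamma}$; otherwise $\|\Delta_N\|_{L^2}^{2\gamma} \leq C_\gamma^{2\gamma}\|Y_N\|_{L^2}^{2\gamma} \les \|Y_N\|_{L^\infty}^{2\gamma}$, and the required lower bound becomes non-positive provided $c=c(A,\gamma)$ is taken large enough.

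For \eqref{YY3}, I would apply the Sobolev interpolation
\begin{align*}
\|\Delta_N\|_{H^\sigma} \leq \|\Delta_N\|_{L^2}^{1-\sigma/s}\,\|\Delta_N\|_{H^s}^{\sigma/s},
\end{align*}
raise to the $2q$-th power, and apply Young's inequality with conjugate exponents $p=s/(q\sigma)$ and $p'=s/(s-q\sigma)$ (both admissible by $q\sigma < s$). This produces an upper bound of the form $\tfrac{1}{100}\|\Delta_N\|_{H^s}^2 + C\,\|\Delta_N\|_{L^2}^{2q(s-\sigma)/(s-q\sigma)}$ for an absolute constant $C$. Substituting $s-\sigma = \tfrac12+\varepsilon$ and simplifying $s-q\sigma = \tfrac12\bigl(q(1+2\varepsilon)-2s(q-1)\bigr)$, the $L^2$-exponent becomes exactly the claimed $\frac{2q(1+2\varepsilon)}{q(1+2\varepsilon)-2s(q-1)}$, and taking $A$ sufficiently large that $A/100 \geq C$ yields \eqref{YY3}.

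For \eqref{YY2}, I would estimate by Cauchy-Schwarz together with the embedding $W^{\sigma,\infty}(\T)\hookrightarrow H^\sigma(\T)$ (valid since $\T$ has finite measure),
\begin{align*}
\int_\T |\overline{D^\sigma Y_N}\,D^\sigma \Delta_N|\,dx \les \|Y_N\|_{W^{\sigma,\infty}}\|\Delta_N\|_{H^\sigma}.
\end{align*}
Raising to the $q$-th power and applying the elementary inequality $ab \leq \tfrac12(a^2+b^2)$ separates the two factors and produces $c\|Y_N\|_{W^{\sigma,\infty}}^{2q}$ plus a term proportional to $\|\Delta_N\|_{H^\sigma}^{2q}$, which is then controlled by invoking \eqref{YY3} with constants halved to $\tfrac{1}{200}$ and $\tfrac{A}{200}$. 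The only real subtlety in the whole argument is bookkeeping the target constants $\tfrac{1}{100}$, $\tfrac{1}{200}$, $\tfrac{A}{100}$, $\tfrac{A}{200}$, but this reduces entirely to choosing $A$ large enough to dominate the fixed Young constants, which the statement explicitly permits.
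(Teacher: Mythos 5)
Your proposal is correct, and part (i) coincides with the paper's argument: the same interpolation $\|\Dr_N\|_{H^\s}\les \|\Dr_N\|_{H^s}^{\s/s}\|\Dr_N\|_{L^2}^{1-\s/s}$ followed by Young's inequality with exponents $s/(q\s)$ and $s/(s-q\s)$, and your arithmetic identifying the $L^2$-exponent with $2\g$ from \eqref{ga} is exactly the computation the authors perform; likewise \eqref{YY2} is obtained in the paper by Cauchy--Schwarz, the embedding $W^{\s,\infty}(\T)\hookrightarrow H^{\s}(\T)$, the inequality $ab\le\frac12(a^2+b^2)$, and then \eqref{YY3}.

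For part (ii) you take a genuinely different route. The paper does not complete the square; it applies the elementary inequality $|a+b+c|^{\g}\ge \frac12|c|^{\g}-C_{\g}(|a|^{\g}+|b|^{\g})$ (quoted from Lemma 5.8 of \cite{OOT}) with $c=\int|\Dr_N|^2$, and then absorbs the cross term via $\big(\int|\cj{Y_N}\Dr_N|\big)^{\g}\le \|Y_N\|_{L^\infty}^{\g}\|\Dr_N\|_{L^2}^{\g}\le c\|Y_N\|_{L^\infty}^{2\g}+\frac{1}{100C_\g}\|\Dr_N\|_{L^2}^{2\g}$, which is why the constant degrades from $\frac{A}{2}$ to $\frac{A}{4}$. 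Your argument instead observes that the bracket is exactly $\|Y_N+\Dr_N\|_{L^2}^{2}$ and runs a dichotomy on whether $\|\Dr_N\|_{L^2}$ dominates $\|Y_N\|_{L^2}$, using the reverse triangle inequality in the first regime and the trivial nonnegativity of the left-hand side in the second. This is slightly more elementary and self-contained (no external lemma needed), at the cost of being specific to the quadratic structure of the bracket, whereas the paper's three-term inequality is the tool of choice when the expansion of $R_N(Y+\Dr)$ does not reassemble into a perfect square (as happens for more general potentials in \cite{OOT}). Both arguments are valid for all $\g>0$ and yield the stated constant $\frac{A}{4}$ with $c=c(A,\g)$.
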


Set
\begin{align}
\g=\frac {q(1+2\eps)}{q(1+2\eps)-2s(q-1)}.
\label{ga}
\end{align}
Note that to ensure the denominator is non-vanishing, we impose $\s q<s$.
Then, as in \cite{BG, GOTW, ORSW2, OOT}, 
the main strategy is 
to establish a pathwise lower bound on $\W_N(\dr)$ in~\eqref{v_N0a}, 
uniformly in $N \in \NN$ and $\dr \in \Ha$, 
by making use of the 
positive terms:
\begin{equation}
\U_N(\dr) =
\E \bigg[\frac A 4\| \Dr_N\|_{L^2}^{2\g} + \frac{1}{2} \int_0^1 \| \dr(t) \|_{L^2_x}^2 dt\bigg].
\label{v_N1}
\end{equation}

\noi
coming from \eqref{v_N0a} and \eqref{YY5}.
From \eqref{v_N0a} and \eqref{v_N1} together with Lemmas  \ref{LEM:Dr2} and \ref{LEM:Dr}, we obtain
\begin{align}
\inf_{N \in \mathbb{N}} \inf_{\dr \in \Ha} \W_N(\dr) 
\geq 
\inf_{N \in \mathbb{N}} \inf_{\dr \in \Ha}
\Big\{ -C_0 + \frac{1}{10}\U_N(\dr)\Big\}
\geq - C_0 >-\infty.
\label{YYY5}
\end{align}

\noi
Then,  
the uniform exponential integrability \eqref{u2} 
follows from 
\eqref{YYY5} and Lemma \ref{LEM:var3}.
This completes the proof of  
Lemma \ref{LEM:un1}.

\medskip

We conclude this section by 
presenting the proof of Lemma \ref{LEM:Dr2}.

\begin{proof}[Proof of Lemma \ref{LEM:Dr2}]
	(i)
	It follows from interpolation and Young's inequality that 
\begin{align*}
\begin{split}
\|  \Dr_N \|_{H^\s }^{2q}
&\les  \| \Dr_N \|_{H^s}^{\frac {2q\s}s } \| \Dr_N \|_{L^2}^{2q(1-\frac \s s)}   \le \frac 1{100} \| \Dr_N \|_{H^s}^2 + \frac A{100} \| \Dr_N \|_{L^2}^{2\g},
\end{split}
\end{align*}
where $A>0$ is sufficiently large. 
Here, the second inequality follows
from Young's inequality since
$\frac{q\s}{s} + \frac {q}{\g}(1-\frac \s s )=1$, where $\g$ is given in \eqref{ga}. 
This yields the first estimate \eqref{YY3}.
As for the second estimate \eqref{YY2}, the Cauchy-Schwarz and Cauchy inequalities imply
\begin{align*}
\bigg(\int_{\T} \big| \cj{D^\s Y_N} D^\s \Dr_N \big| dx\bigg)^q
&  \le  \|  Y_N\|_{H^{\s}}^q
\|\Dr_N\|_{H^\s}^q \le \frac{1}{2} \|  Y_N\|_{W^{\s,\infty}}^{2q}+
\frac{1}{2}\|\Dr_N\|_{H^{\s}}^{2q}.
\end{align*}
Now we apply \eqref{YY3}.

\medskip
	
\noi
(ii) 
Note that there exists a constant $C_\g>0$ such that 
\begin{align}
|a+b+c|^\g
\ge \frac 12 |c|^\g - C_\g(|a|^\g+|b|^\g)
\label{OOT}
\end{align}
	
\noi
for any $a,b,c \in \R$ (see Lemma 5.8 in \cite{OOT}). Then, from \eqref{OOT}, we have 
\begin{align}
\begin{split}
A \bigg\{ \int_{\T}& \Big(  |Y_N|^2  + 2 \Re( \cj{Y_N} \Dr_N) + |\Dr_N|^2 \Big) dx \bigg\}^\g \\
&\ge \frac A2 \bigg( \int_{\T} |\Dr_N|^2dx \bigg)^\g
- AC_\g \bigg\{ \bigg( \int_{\T} | Y_N|^2  dx \bigg)^\g
+ \bigg( \int_{\T} |\cj{Y_N} \Dr_N| dx \bigg)^\g \bigg\}.
\end{split}
\label{YZ3}
\end{align}

\noi
From Young's inequality, we have
\begin{align}
\begin{split}
\bigg( \int_{\T} | \cj{Y_N} \Dr_N| dx \bigg)^\g
&\le \| Y_N \|_{L^\infty}^\g \| \Dr_N \|_{L^2}^\g  \le c \| Y_N \|_{L^\infty}^{2\g}  + \frac1{100C_\g} \| \Dr_N \|_{L^2}^{2\g}.
\end{split}
\label{YZ4}
\end{align}
	
\noi
Hence, \eqref{YY5} follows from \eqref{YZ3} and \eqref{YZ4}.
\end{proof}

\section{The weakly dispersive case $\frac{1}{2}<\al <1$}\label{SEC:weak}

In this section, we consider the weakly dispersive FNLS~\eqref{FNLS} with $\frac{1}{2}<\al<1$ and prove our second main result (Theorem~\ref{THM:2})

\subsection{Truncated dynamics in the weakly dispersive case}

We consider two kinds of approximating flows:
\begin{equation}\label{TFNLS21}
\begin{cases}
i\partial_t u_N+(-\dx^2)^\alpha u_N=\pm \pi_{\leq N}\big( |u_N|^2u_N\big), \quad (t,x)\in \R\times \T,\\
u|_{t=0}=\pi_{\leq N}\phi, 
\end{cases}
\end{equation} 
and
\begin{equation}\label{TFNLS22}
\begin{cases}
i\partial_t u_N+(-\dx^2)^\alpha u_N=\pm \pi_{\leq N}\big( |\pi_{\leq N} u_N|^2  \pi_{\leq N}u_N\big), \quad (t,x)\in \R\times \T,\\
u|_{t=0}=\phi. 
\end{cases}
\end{equation} 
The difference between these is that \eqref{TFNLS22} is the linear evolution on 
high frequencies $\{ |n|>N\}$, whilst \eqref{TFNLS21} vanishes on high frequencies. 
We denote the solution maps of \eqref{TFNLS21} and \eqref{TFNLS22} at time $t>0$ by $\wt{\Phi}_{N,t}$ and $\Phi_{N,t}$ respectively.
We have the identities 
\begin{align}
\Phi_{N,t}=\wt{\Phi}_{N,t} \circ \pi_{\leq N} + S(t)\circ \pi_{>N} \quad \text{and}\quad \pi_{\leq N}\circ \Phi_{N,t}=\wt{\Phi}_{N,t}\circ \pi_{\leq N}. \label{flows}
\end{align}
In this section, we define $E_N$  by
\begin{align*}
E_N=\pi_{\leq N}H^{\s}(\T)=\text{span} \{e^{inx}:\vert n \vert \leq N\}
\end{align*}

\noi
and let $E_N^\perp$ be the orthogonal complement of $E_N$
in $H^{\s}(\T)$. Given $R > 0$, we use $B_R$ to denote the ball of radius $R$ in $H^\s(\T)$
centered at the origin.
We need some boundedness and approximation properties for these flows.

\begin{lemma}\label{LEM:approx}
Given $\frac{1}{2}<\al<1$, let $s>  1-\frac{\al}{2}$ be such 
that the flow $\Phi$ of FNLS~\eqref{FNLS} is (locally) well-defined in $H^\s(\T)$, $\s=s-\frac 12-\eps$. 
Then, the following statements hold: 
\begin{enumerate}[\normalfont (i)]
\setlength\itemsep{0.3em}
\item  Then, for every $R>0$, there exist $T(R)>0$ and $C(R) >0$ such that 
\begin{align*}
\Phi_{N,t}\left(B_R \right)\subset B_{C(R)}
\end{align*}
for all $t\in[-T(R),T(R)]$ and for all $N\in \mathbb{N}\cup\{\infty\}$. 
\item For every $R>0$, there exists $T(R)>0$ and $C(R)>0$ such that 
\begin{align*}
\sup_{u\in B_R}  \|\Phi_{N,t}(u)\|_{X^{\s,\frac{1}{2}+}_{T(R)}}\leq C(R),
\end{align*}
uniformly in $N\in \NB \cup \{\infty\}$
\item Let $A\subset B_{R} \subset H^{\s}(\T)$ be a 
compact set and denote by $T(R)>0$ the local existence time 
of the solution map $\Phi$ defined on $B_{R}$. Then, for every $\delta>0$,
 there exists $N_0\in\mathbb{N}$, such that 
\begin{align*}
\|\Phi_{t}(u)-\Phi_{N,t}(u)\|_{H^{\s}}<\dl,
\end{align*} for any $u\in A$, $N\geq N_0$ and $t\in [-T(R),T(R)]$. Furthermore, we have
\begin{align*}
\Phi_{t}\left(A  \right)\subset \Phi_{N,t}\left(A+B_\delta\right)
\end{align*}
for all $t\in[-T(R),T(R)]$ and for all $N\geq N_0$.
\end{enumerate}

\end{lemma}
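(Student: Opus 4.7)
The proof plan is to treat all three statements via a standard Fourier-restriction fixed-point argument, adapted uniformly in the truncation parameter $N$.

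For parts (i) and (ii), my plan is to run the same contraction mapping argument underlying the local well-posedness of FNLS~\eqref{FNLS} in $H^\s(\T)$, but applied to the Duhamel formulations of \eqref{TFNLS21} and \eqref{TFNLS22}. Concretely, the local theory of Sun-Tzvetkov~\cite{ST1} (combined with the bilinear Strichartz estimate \eqref{bilinstrich1} and Lemma~\ref{LEM:sharp}) provides a trilinear estimate of the form
\begin{equation*}
\bigl\| \N(u_1,u_2,u_3)\bigr\|_{X^{\s,b-1}_T} \leq C\, T^{\theta} \prod_{j=1}^{3}\|u_j\|_{X^{\s,b}_T},
\end{equation*}
for some $b=\frac12+$, $\theta>0$, valid whenever $\s\geq \frac{1-\al}{2}$, which is ensured by our hypothesis $s>1-\tfrac{\al}{2}$. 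Since the sharp projector $\pi_{\le N}$ is bounded on $X^{\s,b}$ uniformly in $N$, inserting $\pi_{\le N}$ inside and/or outside the nonlinearity (as in \eqref{TFNLS21} and \eqref{TFNLS22}) does not affect the estimate. The standard Picard iteration then yields a common existence time $T(R)>0$ and a common $X^{\s,b}_{T(R)}$-bound $C(R)$, uniform in $N\in\mathbb{N}\cup\{\infty\}$, for any initial data in $B_R$. Since $X^{\s,b}_T\embeds C([-T,T];H^{\s}(\T))$ for $b>\tfrac12$, this gives (i), while (ii) is immediate from the fixed-point bound.

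For part (iii), I would split the argument into a pointwise approximation step and a topological inversion step. For the first, given $u\in A$, the difference $w_N := \Phi_t(u)-\Phi_{N,t}(u)$ satisfies a Duhamel equation driven by the symmetric trilinear form $\N$ and the error produced by $\pi_{>N}$ acting on the nonlinearity and on the initial data. Plugging into the trilinear estimate and absorbing the $w_N$-terms for $T$ small (thanks to the $T^\theta$ factor), one reduces the question to showing that $\pi_{>N}\phi\to 0$ in $H^\s$ and that $\pi_{>N}$ applied to the cubic nonlinearity of the FNLS solution tends to $0$ in $X^{\s,b-1}_T$; both follow from the $X^{\s,b}_T$-bound on $\Phi_t(u)$ via dominated convergence. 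Compactness of $A$ then upgrades this pointwise convergence to uniform convergence by a standard equicontinuity/covering argument using the continuity of $\Phi_t$ and $\Phi_{N,t}$ on $B_R$ (which itself follows from the same trilinear estimate).

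For the inclusion $\Phi_t(A)\subset \Phi_{N,t}(A+B_\delta)$, my plan is to invert $\Phi_{N,t}$: given $v=\Phi_t(u)\in\Phi_t(A)$, set $w := \Phi_{N,-t}(v)$, so that $\Phi_{N,t}(w)=v$ (using the group property, which holds since the truncated flows are well-posed for all times by part~(i)). It suffices to show $\|w-u\|_{H^\s}<\delta$. Writing
\begin{equation*}
w-u = \Phi_{N,-t}(v) - \Phi_{-t}(v),
\end{equation*}
this is precisely the approximation statement applied to the compact set $\Phi_t(A)\subset B_{C(R)}$ at time $-t$, so it can be made smaller than $\delta$ uniformly for $N\geq N_0$. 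The main technical step is the uniform trilinear estimate together with the verification that the two truncation schemes \eqref{TFNLS21} and \eqref{TFNLS22} satisfy the identities \eqref{flows}, so that the same $H^\s$-bounds transfer freely between them; the rest of the proof is then a soft functional-analytic packaging.
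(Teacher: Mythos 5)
Your proposal is correct and follows essentially the same route as the paper, whose own proof is only a citation sketch: parts (i)--(ii) are attributed to the contraction argument of the local well-posedness theory (the paper cites Cho--Hwang--Kwon--Lee, you cite the equivalent estimates from \cite{ST1}), and part (iii) is obtained by perturbing that contraction argument as in \cite[Lemma 6.20]{OTz}. The only organizational difference is in the approximation step of (iii): the paper isolates a single uniform high-frequency tail bound, $\| \P_{>N}\Phi_t(u_0)\|_{X^{\s,\frac12+}_{T(R)}}<\eps$ for all $u_0\in A$ and $N\ge N_0$ (proved from compactness of $A$ and continuity of the data-to-solution map into $X^{\s,\frac12+}_{T(R)}$), and feeds it directly into the difference estimate, whereas you prove pointwise convergence first and then upgrade to uniformity by equicontinuity; these are interchangeable uses of the compactness of $A$. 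One small correction: the invertibility of $\Phi_{N,t}$ used in your inclusion argument does not follow from part (i), which is purely local in time; it follows instead from the fact that \eqref{TFNLS21} is a finite-dimensional ODE with conserved $L^2$-norm (hence globally well-posed) and \eqref{TFNLS22} differs from it only by the linear evolution on high frequencies, via \eqref{flows}.
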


\begin{proof}
The uniform growth bounds (i) and (ii) follow from the local well-posedness argument in \cite{Cho}. 
The approximation property (iii) follows from a modification of the contraction argument as in \cite[Lemma 6.20]{OTz} by using the nonlinear estimate (the trilinear estimate) in \cite[Proposition 4.1]{Cho} and the following uniform estimate which plays the same role as in \cite[Lemma 6.19]{OTz}: there exists $N_0=N_0(\eps,R)\in \NB$ such that we have 
\begin{align}
\| \P_{>N} \Phi_{t}(u_0) \|_{X_{T(R)}^{\s, \frac 12+ }} < \eps 
\label{uni1}
\end{align}
for all $u_0 \in A$ and $N \ge N_0$. This estimate \eqref{uni1} can be proved by following the proof of \cite[Lemma 6.19]{OTz} with the continuity of the flow map $\Phi_{t}$ from $H^\s$ to $X_{T(R)}^{\s,\frac 12+}$. 
\end{proof}

\subsection{Proof of Theorem \ref{THM:2}}
In this subsection, we present the proof of our second main result (Theorem \ref{THM:2}). 
First, suppose that we have the following energy estimate with smoothing which will be proved in the next subsection.

\begin{lemma}\label{LEM:energy2} 
Let $\frac{1}{2}<\al<1$ and $s\in \big( \frac{3-2\al}{2},1\big]$. Given $N\in \NB $, $R>0$ and $u_0 \in B_{R}$, let $u_{N}\in C([0,T(R));H^{\s}(\T))$ be the local-in-time solution to \eqref{TFNLS21} satisfying $u_{N}\vert_{t=0}=\pi_{\leq N}u_0$, as assured by Lemma~\ref{LEM:approx}. Then there exists $k\in \NB$ such that, we have 
\begin{align}
\begin{split}
\bigg| \int_{0}^{T} \Re \jb{ i|u_N|^{2}u_N(-t,\cdot),D^{2s}u_N(-t,\cdot)}_{L^2(\T)}dt \bigg|  
&\les 1+\| u_N\|_{X_{T}^{\s,b}}^{k} \leq C(R).
\end{split}
\label{energyest2}
\end{align}

\noi
for some $b=\frac{1}{2}+\dl$, $0<\dl\ll 1$.
\end{lemma}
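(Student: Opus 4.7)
The plan is to parallel the proof of Lemma~\ref{LEM:energy}: symmetrize the cubic interaction, discard the resonant contribution (which contributes a real quantity when paired with $D^{2s}u_N$), integrate by parts in time against $e^{it\Phi_\al(\cj n)}/(i\Phi_\al(\cj n))$, and then estimate the resulting boundary $4$-linear form $\N_0(u_N)(t)\big|_{t=-T}^{0}$ together with the bulk $6$-linear spacetime forms $\N_1(u_N) + \N_2(u_N)$, exactly as in \eqref{sym1}--\eqref{intebypart}. The essential difference is that \eqref{energyest2} forbids any factor of $\|u_N\|_{L^2}$ on the right-hand side, so we must close everything in $X^{\s,b}_T$ alone. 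Since the weakly dispersive $L^4_{t,x}$-Strichartz estimate \eqref{L4strich} loses $N^{(1-\al)/4}$ per factor, the $L^4$-based argument of Section~\ref{SEC:energy} cannot be imported; we substitute the bilinear Strichartz estimate \eqref{bilinstrich1}, whose loss $M^{(1-\al)/2}$ falls only on the smaller of the two paired frequencies.

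For the boundary term, using Lemma~\ref{LEM:mult} to bound $|\Psi_s(\cj n)|/|\Phi_\al(\cj n)| \lesssim \jb{n_{\max}}^{2s-2\al}$ and distributing $\jb{n_j}^\s$ across all four Fourier coefficients (rather than only three, as in Lemma~\ref{LEM:energy}, where the fourth $\jb{n_j}^\s$ was traded for an $L^2$-factor), the effective multiplier becomes $\jb{n_{\max}}^{-\wt\nu}$ with $\wt\nu := 4\s + 2\al - 2s$. A Cauchy--Schwarz argument in the constrained frequency sum then yields $|\N_0(u_N)(t)| \lesssim \|u_N(t)\|_{H^\s}^4$ provided $\wt\nu > 1$; this is precisely the threshold $s > (3-2\al)/2 + 2\eps$ in the hypothesis. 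The embedding $X^{\s,b}_T \hookrightarrow C([0,T];H^\s)$ then converts this to the desired $\|u_N\|_{X^{\s,b}_T}^4$ bound. The restriction $s \leq 1$ ensures we may apply Lemma~\ref{LEM:mult} directly without resorting to the more delicate pointwise cases of Lemma~\ref{LEM:Psis}.

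For the bulk terms $\N_1(u_N)$ and $\N_2(u_N)$ (treated identically by symmetry), we extend $u_N$ from $[-T,0]\times\T$ to $\R\times\T$, Littlewood--Paley-decompose all six factors, and perform a case analysis on the relative sizes of the frequencies $(n_{11},n_{12},n_{13},n_2,n_3,n_4)$ following the organization of Cases~1--3 of Lemma~\ref{LEM:energy}. In each case, the gain $\jb{n_{\max}}^{2s-2\al}$ combined with the weights $\jb{n_j}^{-\s}$ drawn from each Fourier coefficient produces a net frequency budget; the six factors are then grouped into three pairs, each estimated in $L^2_{t,x}$ via \eqref{bilinstrich1}. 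The pairings are chosen so that the $M^{(1-\al)/2}$-losses fall on frequencies already carrying surplus $\jb{n}^{-\s}$ weight, and summability across dyadic blocks is recovered by sacrificing an arbitrarily small power $\jb{n_{\max}}^{-0+}$. The time cut-off $\chi_{[-T,0]}$ is inserted via Lemma~\ref{LEM:sharp} at the cost of working with $b$ slightly below $\tfrac{1}{2}$ inside the $X^{0,b}$-norm, a loss absorbed into the $\eps$-slack.

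The main obstacle will be verifying the case analysis at the edge $s=(3-2\al)/2$, particularly in configurations where four of the six frequencies are comparable to $n_{\max}$ (the analogue of Subcase~2.2.1 of Lemma~\ref{LEM:energy}): there, the smoothing from Lemma~\ref{LEM:mult} only barely outruns the cumulative bilinear Strichartz losses from the three pairings. As in Lemma~\ref{LEM:energy}, one expects a careful redistribution of the $\jb{n_{\max}}^{2s-2\al}$ weight across the Fourier coefficients before Cauchy--Schwarz, combined with the observation that the constraints $n_4=n_1-n_2+n_3$ and $n_1=n_{11}-n_{12}+n_{13}$ always allow each large frequency to be paired with a strictly smaller one, so that the bilinear Strichartz loss never lands on a frequency comparable to $n_{\max}$. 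This pigeonholing is what ultimately aligns the numerology with the hypothesis $s > \tfrac{3-2\al}{2}$.
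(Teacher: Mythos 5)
Your overall architecture is exactly the paper's: symmetrize, cancel the resonant part, integrate by parts in time against $e^{it\Phi_\al(\cj n)}/(i\Phi_\al(\cj n))$, and reduce to the boundary estimate \eqref{bdry1} and the bulk estimates \eqref{higher1}, with the boundary term closed by distributing $\jb{n_j}^{\s}$ over all four factors and using Lemma~\ref{LEM:mult} plus Cauchy--Schwarz. That part matches the paper (up to the fact that the paper's boundary Cases 2 and 3 distribute the weights asymmetrically, which is where the hypothesis $s\le 1$ actually enters, via the condition $-2\s+1+\eps\ge 0$ --- not through Lemma~\ref{LEM:mult}, which holds for all $s\ge \tfrac12$).

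The gap is in your treatment of the bulk sextilinear forms. You commit to estimating all six factors by forming \emph{three} pairs, each handled by the bilinear estimate \eqref{bilinstrich1}, and you justify this with the claim that the constraints $n_4=n_1-n_2+n_3$ and $n_1=n_{11}-n_{12}+n_{13}$ ``always allow each large frequency to be paired with a strictly smaller one.'' This is false: taking $n_{11}=n_{12}=n_{13}=n_2=n_3=n_4$ satisfies both constraints with all six frequencies comparable, and \eqref{bilinstrich1} requires $N\gg M$, so no admissible pairing exists there. Since you explicitly decline to use the lossy $L^4$ estimate \eqref{L4strich}, your plan has no tool for these configurations. The paper resolves this by using \eqref{L4strich} (and the Bernstein bound \eqref{Linfty}) alongside at most \emph{two} bilinear pairs; e.g.\ in its Case 1.1 the grouping is one bilinear $L^2$ pair, two $L^4_{t,x}$ factors and two $L^\infty_{t,x}$ factors, and in its Subcase 2.2.1 two bilinear pairs and two $L^\infty_{t,x}$ factors. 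The repair is straightforward --- when a pair has comparable frequencies, $L^4\times L^4$ gives the same loss $N^{(1-\al)/2}$ as \eqref{bilinstrich1} with $M\sim N$, and the resulting numerology ($s>\tfrac{9-7\al}{8}$ in the all-comparable case, $s>\tfrac{9-5\al}{8}$ in the paper's worst subcase) still sits below $\tfrac{3-2\al}{2}$ --- but as written your case analysis does not close, and the pigeonholing argument you lean on cannot be made to work.
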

\noi
We are now ready to present the proof of Theorem \ref{THM:2}.

\begin{proof}[Proof of Theorem~\ref{THM:2}]
%
\noi
Following the same argument as in the proof of Proposition~\ref{PROP:den}, we obtain an explicit expression for the Radon-Nikodym derivative:
\begin{align*}
f_N(t,\phi_N)&=\frac{d({\wt \Phi}_{N,t} )_*\mu_{s,N} }{d\mu_{s,N}}(\phi_N)\\
&=\exp \bigg(\mp\int_0^t \Re \jb{i(|u_N|^2u_N )(-s,\phi_N) , D^{2s}u_N(-s,\phi_N) }_{L^2_{x}}  ds \bigg).
\end{align*}
Now we want to show (local-in-time and local-in-phase space) quasi-invariance. 
Given $R>0$, let $A\subset B_{R} \subset H^{\s}(\T)$ be a measurable set. We want to show that
given any $t\in [0,T(R)]$, we have
\begin{align*}
\mu_s(A)=0 \implies \mu_s(\Phi_{t}(A))=0.
\end{align*}
By the inner regularity of the measure $\mu_s$, it is enough to show that
\begin{align*}
A\subset B_{R} \textnormal{ compact and }\mu_s(A)=0 \implies \mu_s(\Phi_{t}(A))=0.
\end{align*}
Now, from Lemma~\ref{LEM:approx} (ii), we have 
\begin{align}
\mu_{s}(\Phi_{t}(A)) \leq \mu_{s}( \Phi_{N,t}(A+B_{\dl})) \label{meastoN}
\end{align}
for any fixed $\dl>0$, provided that $N$ is large enough. Let $D\subset B_{2R}$ be an arbitrary measurable set. By Fubini's theorem and \eqref{flows}, we have 
\begin{align*}
\mu_{s}(\Phi_{N,t}(D)) & = \int \chi_{D}( \Phi_{N,-t}( \phi) )d\mu_{s}(\phi) \\
& = \int_{E_{N}^{\perp}} \bigg\{ \int_{E_{N}} \chi_{D}( \Phi_{N,-t}(\phi)) d\mu_{s,N} \bigg\}  d\mu_{s,N}^{\perp} \\
& =\int_{E_{N}^{\perp}} \bigg\{ \int_{E_{N}} \chi_{D}(  \wt{\Phi}_{N,-t}(\pi_{\leq N} \phi)+S(-t)\pi_{>N}\phi)  d\mu_{s,N} \bigg\}  d\mu_{s,N}^{\perp}  \\
& =\int_{E_{N}^{\perp}} \bigg\{ \int_{E_{N}} \chi_{D}(  \phi_{N}+S(-t)\pi_{>N}\phi) f_{N}(-t,\phi_{N})  d\mu_{s,N} \bigg\}  d\mu_{s,N}^{\perp} 
\end{align*} 
Since $D\subset B_{2R}$,  we can use Lemma~\ref{LEM:approx} and Lemma~\ref{LEM:energy2} to find 
\begin{align*}
\sup_{N\in \NB} \sup_{\phi_{N} \in B_{2R}}\sup_{t\in [ -T(R),T(R)]}  f_{N}(-t,\phi_N) \leq e^{C_{0}(T(R),R)} =: C(R).
\end{align*}
By the invariance of $\mu_{s,N}^{\perp}$ under $S(t)$, which follows from \cite[Lemma 4.1]{OTz}, we then have
\begin{align*}
\mu_{s}(\Phi_{N,t}(D)) & \leq C(R) \int_{E_{N}^{\perp}} \int_{E_{N}} \chi_{D}(  \phi_{N}+S(-t)\pi_{>N}\phi)   d\mu_{s,N}d\mu_{s,N}^{\perp}  \\
& = C(R) \int_{E_{N}} \int_{E_{N}^{\perp}} \chi_{D}(  \phi_{N}+S(-t)\pi_{>N}\phi)  d\mu_{s,N}^{\perp}  d\mu_{s,N} \\
&= C(R) \int_{E_{N}} \int_{E_{N}^{\perp}} \chi_{D}(  \phi_{N}+\pi_{>N}\phi)  d\mu_{s,N}^{\perp}  d\mu_{s,N} \\
& = C(R) \int \chi_{D}( \phi )  d\mu_{s} = C(R)\mu_{s}(D).
\end{align*}
We now go back and apply this inequality with $D=A+B_{\dl}$ to \eqref{meastoN}  (supposing $\dl<R$), and we get
\begin{align*}
\mu_{s}(\Phi_{t}(A))  \leq C(R) \mu_{s}(A+B_{\dl})
\end{align*}
Since $A$ is compact, by the continuity of probability measures from above, we have 
\begin{align*}
\lim_{\dl\to 0}  \mu_{s}( A+B_{\dl}) =  \mu_{s}(A)
\end{align*}

\noi
and hence 
\begin{align}
\mu_s(\Phi_{t}(A) ) \le C(R) \mu_s(A)
\label{Com}
\end{align}

\noi
for any compact set $A \subset B_R$.
Now since $\mu_{s}(A)=0$, we have  
\begin{align*}
\mu_{s}(\Phi_{t}(A))=0.
\end{align*}

\noi
This completes the proof of Theorem \ref{THM:2}.
\end{proof}

We note that we can remove the compactness assumption in \eqref{Com}. This general observation is crucial for the proof of Proposition~\ref{PROP:Linfty} below.

\begin{corollary}\label{COR:density}
Let $\frac{1}{2}<\al<1$ and $s\in \big( \frac{3-2\al}{2},1\big]$. Given any $R>0$, let $A\subseteq B_{R}$ be a measurable set. Then, 
\begin{align*}
\mu_{s}(\Phi_{t}(A))\leq C(R)\mu_{s}(A),
\end{align*}
for any $t\in [0,T(R)]$.
\end{corollary}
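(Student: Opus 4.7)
The plan is to bootstrap the compact-set bound \eqref{Com} obtained in the proof of Theorem~\ref{THM:2} to an arbitrary Borel set $A\subseteq B_R$ by invoking the inner regularity of $\mu_s$ on the Polish space $H^\sigma(\T)$ together with the fact that $\Phi_t$ acts as a homeomorphism for $|t|\le T(R)$. The heart of the argument is to approximate $\Phi_t(A)$ from inside by compact subsets, pull them back by $\Phi_{-t}$ into compact subsets of $A\subseteq B_R$, and then apply \eqref{Com} directly.

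First, I would note that, thanks to Lemma~\ref{LEM:approx}~(i), (ii) and the time-reversibility of FNLS~\eqref{FNLS}, for every $|t|\le T(R)$ the solution map $\Phi_t$ restricts to a homeomorphism from $B_R$ onto $\Phi_t(B_R)\subseteq B_{C(R)}$, with continuous inverse $\Phi_{-t}$. In particular, $\Phi_t$ is a Borel isomorphism on $B_R$, so $\Phi_t(A)$ is Borel measurable whenever $A\subseteq B_R$ is. Since $\mu_s$ is a Borel probability measure on the separable Banach space $H^\sigma(\T)$, it is tight (Ulam's theorem), hence inner regular, and one has
\begin{align*}
\mu_s(\Phi_t(A)) = \sup\bigl\{\mu_s(K') : K' \subseteq \Phi_t(A),\ K' \text{ compact in } H^\sigma(\T)\bigr\}.
\end{align*}

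For any such compact $K' \subseteq \Phi_t(A)\subseteq \Phi_t(B_R)$, I set $K := \Phi_{-t}(K')$. Continuity of $\Phi_{-t}$ ensures that $K$ is compact; the inclusion $K' \subseteq \Phi_t(B_R)$ combined with $\Phi_{-t}\circ\Phi_t = \Id$ on $B_R$ forces $K\subseteq B_R$; and the injectivity of $\Phi_t$ gives $K\subseteq A$ since $K' \subseteq \Phi_t(A)$. Applying \eqref{Com} to the compact set $K\subseteq B_R$ yields
\begin{align*}
\mu_s(K') = \mu_s(\Phi_t(K)) \le C(R)\,\mu_s(K) \le C(R)\,\mu_s(A).
\end{align*}
Taking the supremum over $K'$ and invoking inner regularity gives the desired inequality $\mu_s(\Phi_t(A))\le C(R)\mu_s(A)$.

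The only point requiring care, which I would expect to be the main (minor) obstacle, is the verification that $\Phi_{-t}(K')\subseteq B_R$ and that $K\subseteq A$: both rely on the fact that $\Phi_t$, viewed on $B_R$, is a genuine bijection onto $\Phi_t(B_R)$ with continuous inverse $\Phi_{-t}$. This in turn requires that the local existence time be symmetric in $t$ (so that $\Phi_{-t}$ is defined on $\Phi_t(B_R)\subseteq B_{C(R)}$), which follows from the time-reversibility of FNLS~\eqref{FNLS} and the uniform growth estimate in Lemma~\ref{LEM:approx}. Everything else is a clean application of tightness on a Polish space.
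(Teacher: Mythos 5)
Your proof is correct and follows essentially the same route as the paper: approximate $\Phi_t(A)$ from inside by compact sets via inner regularity of $\mu_s$, pull these back by the continuous inverse $\Phi_{-t}$ to obtain compact subsets of $A$, apply the compact-set estimate \eqref{Com}, and pass to the limit. The paper's argument (following \cite[Lemma 6.10]{OTz}) is the same, merely phrased with a sequence $K_j$ instead of a supremum.
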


\begin{proof}
We follow the argument in \cite[Lemma 6.10]{OTz}. 	
Let $A$ be a measurable set in $B_R \subset H^\s$. Then, from the inner regularity of $\mu_s$, there exists a sequence $\{K_j \}$ of compact sets such that $K_j \subset \Phi_{t}(A)$ and 
\begin{align}
\lim_{j\to \infty } \mu_s(K_j) =\mu_s(\Phi_{t}(A)).
\label{Com1}
\end{align}  

\noi
From the bijectivity of $\Phi_{t}$, we have
\begin{align*}
K_j=\Phi_{t}(\Phi_{-t}(K_j)).
\end{align*} 	

\noi
Since $\Phi(-t)$ is the continuous map, one can observe that $\Phi(-t)(K_j)$ is compact. Also, we have $\Phi(-t)(K_j)\subset \Phi(-t)\Phi_{t}(A)=A$.
Hence, by applying \eqref{Com} to $\Phi(-t)(K_j)$, we have
\begin{align}
\mu_s(K_j)=\mu_s(\Phi_{t}( \Phi(-t)(K_j) ) )
\le C(R) \mu_s(\Phi(-t)(K_j))
\le C(R) \mu_s(A).
\label{Com2}
\end{align} 	 
	
\noi
Then, after taking a limit as $j\to \infty$, it follows from \eqref{Com2} and \eqref{Com1} that we have the desired result.  	
\end{proof}

\begin{proof}[Proof of Proposition~\ref{PROP:Linfty}]
In the following, we fix $t\in \R$.
We first show \eqref{density}. 
By Proposition~\ref{PROP:den}, we know that the density of $(\Phi_{t})_{\ast} \mu_{s,r}$ with respect to $\mu_{s,r}$ is given by
\begin{align}
\frac {d({\Phi_t})_*\mu_{s,r} }{d\mu_{s,r} }=f(t,\cdot)=
\exp\bigg(\mp\int_0^t   \textup{Re} \, \jb{i (|u|^2u)(-t',\cdot ), D^{2s}u(-t',\cdot)   }_{L^2(\T)}    dt' \bigg)
\label{den}
\end{align}
for every $t\in \R$ and $r>0$. Now, fix $t\in \R$,  $r>0$ and $B_{r}:=\{ \phi \in L^2(\T)\,:\, \|\phi\|_{L^2}\leq r\}$.
Let $A\subseteq L^2(\T)$ be a measurable set. 
From the $L^2$-conservation of the flow of \eqref{FNLS} and \eqref{den}, we have 
\begin{align}
(\Phi_{t})_{\ast}\mu_s (A\cap B_r)&= \int_{L^2(\T)} \chi_{A\cap B_r}(  \Phi_{t}(\phi))  d\mu_s(\phi) \notag \\
& = \int_{L^2(\T)} \chi_{A}(\Phi_{t}(\phi))\chi_{B_r}(\phi)d\mu_s (\phi)  \notag\\
& = \int_{L^2(\T)} \chi_{A}(\phi) d(\Phi_{t})_{\ast}\mu_{s,r}(\phi) \notag \\
& =\int_{L^2(\T)} \chi_{A}(\phi) \chi_{B_r}(\phi)f(t,\phi)d\mu_{s}(\phi). \label{meas2}
\end{align} 

\noi
It follows from the continuity from below of a measure that 
\begin{align}
\lim_{r\to \infty }(\Phi_t)_{*}\mu_s(A\cap B_r)=(\Phi_t)_{*}\mu_s(A).
\label{cont}
\end{align}

\noi
From the Lebesgue monotone convergence theorem, we have
\begin{align}
\lim_{r\to \infty}\int_A \chi_{B_r}(\phi)f(t,\phi)d\mu_{s}(\phi)=\int_A f(t,\phi)d\mu_{s}(\phi).
\label{mono}
\end{align} 

\noi
Hence, by combining \eqref{meas2}, \eqref{cont}, and \eqref{mono}, we have
\begin{align}
(\Phi_t)_{*}\mu_s(A)=\int_A f(t,\phi)d\mu_{s}(\phi)
\label{den2}
\end{align}

\noi
for any measurable set $A\subseteq L^2(\T)$. It follows from the definition of the Radon–Nikodym derivative $\frac {d({\Phi_t})_*\mu_s }{d\mu_s }$ (that is, 
$\frac {d({\Phi_t})_*\mu_s }{d\mu_s }$ is a function (up to a $\mu_s$-null set) which satisfies  
\eqref{den2}) that we have
\begin{align*}
\frac {d({\Phi_t})_*\mu_s }{d\mu_s }=f(t,\cdot)=
\exp\bigg(\mp\int_0^t   \textup{Re} \, \jb{i (|u|^2u)(-t',\cdot ), D^{2s}u(-t',\cdot)   }_{L^2(\T)}    dt' \bigg)
\end{align*}

\noi
and hence we have verified \eqref{density}.

Now, we proceed to show that $f(t,\cdot)\in L^{\infty}_{\text{loc}}(d\mu_s)$. By following the proof of Theorem~\ref{THM:2} but replacing each instance of Lemma~\ref{LEM:approx} and Lemma~\ref{LEM:energy2} by Lemma~\ref{LEM:Superapprox} and Lemma~\ref{LEM:energy}, we obtain:
\begin{align}
\mu_s(\Phi_{t}(A)) \leq C(R)\mu_s (A), \label{measbd}
\end{align}
for any compact $A\subseteq B_{R}\subset L^2(\T)$ and for some $C(R)>0$ independent of $A$. Following the argument in the proof of Corollary~\ref{COR:density}, we obtain \eqref{measbd} for any measurable set $A\subseteq B_{R}\subset L^2(\T)$. Fix $t\in \R$ and $R>0$. We claim that $\chi_{B_R} f(t,\cdot)\in L^{\infty}(d\mu_s)$. We argue as in \cite[Proposition 3.5]{BTh}.
Suppose, in order to obtain a contradiction, that $\chi_{B_R} f(t,\cdot)\notin L^{\infty}(d\mu_s)$. Then, choosing $M=2C(R)$, there exists a measurable set $A_{R}$ such that 
\begin{align*}
\chi_{B_R}(\phi)f(t,\phi)>M \quad \text{for all} \quad \phi\in A_R
\end{align*}
and $\mu_{s}(A_R)>0$. We necessarily have that $A_{R} \subseteq B_{R}$. Now
\begin{align*}
\mu_{s}(\Phi_{t}(A_R))&=\int_{A_R} f(t,\phi)d\mu_s (\phi)=\int_{A_R} \chi_{B_{R}}(\phi)f(t,\phi)d\mu_s (\phi)> M\mu_{s}(A_R).
\end{align*}

\noi
Now our choice of $M$ yields a contradiction with \eqref{measbd}. Hence, $\chi_{B_R} f(t,\cdot)\in L^{\infty}(d\mu_s)$ and since $R>0$ was arbitrary, $f(t,\cdot)\in L^{\infty}_{\text{loc}}(d\mu_s)$.
\end{proof}

\subsection{Proof of Lemma~\ref{LEM:energy2}}

In this subsection, we present the proof of Lemma \ref{LEM:energy2}. 
We first proceed exactly as in the proof of Lemma~\ref{LEM:energy} by observing a cancellation of resonant interactions, symmetrizing, and integrating by parts in time. This reduces the proof of \eqref{energyest2} to establishing the following two estimates:
\begin{align}
\sup_{t\in [0,T(R)]} \big| \N_{0}(u_N)(t)\big| & \les \sup_{t\in [0,T(R)]}\|u_{N}(t)\|_{H^{\s}}^{4}, \label{bdry1} \\
  \sum_{j=1}^{2} \big|  \N_{j}(u_N) \big|   & \les 1+\|u_{N}\|_{X_{T(R)}^{\s,b}}^{6}, \label{higher1}
\end{align}
for any $N\in \NB\cup\{\infty\}$ and some $b=\frac{1}{2}+\dl$, $0<\dl \ll1$, where the multilinear operators $\N_{0}, \N_{1}$ and $\N_{2}$ are defined in \eqref{intebypart}, with $|n_1|\geq |n_3|$ and $|n_4|\geq |n_2|$.
As our estimates will be uniform in the parameter $N$, in the following, we will simply write $u$ for $u_N$.
First, we will show 
\begin{align}
&|\N_{1}(u_N)|  \notag \\
&= \bigg\vert 2\text{Re} \int_{-T}^{0}\sum_{n_4\in \Z}\sum_{  \substack{n_4=n_1-n_2+n_3 \\ n_1,n_3\neq n_4}  }\frac{\Psi_{s}(\cj{n})}{\Phi(\cj{n})}  \bigg(\sum_{n_1=n_{11}-n_{12}+n_{13}} \ft u(t,n_{11})\cj{ \ft u(t,n_{12})} \ft u(t,n_{13})  \bigg) \notag \\
&\hphantom{XXX}\times \cj{ \ft u}(t,n_2)\ft u(t,n_3) \cj{ \ft u}(t,n_4) dt \bigg\vert \notag \\
& \les \| u\|_{X_{T}^{\s,b}}^{6},  \label{Remainderest2}
\end{align}
for some $b=\frac{1}{2}+\dl$, $0<\dl\ll 1$.
The same argument can be applied to prove \eqref{Remainderest2} for $\N_{2}(u_N)$. Thus, we neglect to show an estimate for $\N_{2}(u_N)$.
In the following, we let $w$ be any extension of $u$ on $[-T,T]$. We suppose that $|n_{11}|\geq |n_{13}|$. Let $n_{\text{max}}^{(1)}=\max( |n_{11}|,|n_{12}|)+1$. 
In this setting, it will be convenient to dyadically decompose the frequencies of all the functions. In view of our symmetry assumptions, we therefore have $N_{1}\geq N_3$, $N_{4}\geq N_2$ and $N_{11}\geq N_{13}$.
We let 
\begin{align*}
f_{N}:&=  \F_{t,x}^{-1}( | \jb{n}^\s \mathcal{F}\{\P_{N}w\}(\tau,n) | ),\\
f_{N,T}:&= \F_{t,x}^{-1}( | \jb{n}^\s \mathcal{F}\{\chi_{[-T,0]}  \P_{N}w \}(\tau,n)| ).
\end{align*}

We split the frequency region into a few cases in a similar way as we did in the proof of Lemma~\ref{LEM:energy}.

\medskip
\noi
$\bullet$ \underline{ \textbf{Case 1:} $n^{(1)}_{\text{max}} \gg n_{\text{max}}$} 

\medskip
\noi

In this case, we have $n^{(1)}_{\text{max}} \gg |n_1|$ which implies $|n_{11}|\sim |n_{12}|$ or $|n_{11}|\sim |n_{13}|$. We assume $|n_{11}|\sim |n_{12}|$.
Then, we have 
\begin{align}
|\N_{1}(u_N)|& \leq \sum_{  \substack{   \scriptscriptstyle{ N_j, \, j=1,\ldots,4}  \\ \scriptscriptstyle{N_{1k}, k=1,2,3}\\ \scriptscriptstyle{N_{1}\geq N_3, N_{4}\geq N_2} \\ \scriptscriptstyle{N_{11}\geq N_{13}, N_{11}\sim N_{12}}  }} (N_2N_3N_4 N_{11}N_{12}N_{13} )^{-\s} \intt_{  \scriptscriptstyle{ \tau_1-\tau_2+\tau_3-\tau_4 +\tau_5-\tau_6=0}} \sum_{\substack{ \scriptscriptstyle{  n_1-n_2+n_3-n_4=0} \\ \scriptscriptstyle{n_1=n_{11}-n_{12}+n_{13}}}}  \frac{|\Psi_{s}(\cj{n})|}{|\Phi_{\al}(\cj{n})|}   
\notag\\
& \times \prod_{j=2}^{4} \mathcal{F}_{t,x} f_{N_j}(\tau_j, n_j)   \mathcal{F}_{t,x} f_{N_{11}, T}(\tau_4, n_{11})   \mathcal{F}_{t,x} f_{N_{12}}(\tau_5, n_{12})\mathcal{F}_{t,x} f_{N_{13}}(\tau_6, n_{13}) d\tau_{1} \ldots d\tau_{5} \notag \\
& =: \sum_{  \ast }  I(\cj{N}), \notag
\end{align}
where $\ast$ represents the conditions on the summations in the first line above.
We split into a few subcases.

\medskip
\noi
$\bullet$ \underline{ \textbf{Case 1.1:} $|n_{13}|\sim |n_{11}|$} 

\medskip
\noi

Without loss of generality, we assume $|n_4|\sim n_{\text{max}}$. 
From Lemma~\ref{LEM:mult}, we have 
\begin{align*}
	\frac{ | \Psi_{s}(\cj{n})|}{|\Phi_{\al}(\cj{n})|} \les N_{4}^{2s-2\al}. 
	\end{align*}
Hence, by H\"{o}lder's inequality, \eqref{bilinstrich1}, \eqref{L4strich}, \eqref{Linfty} and Lemma~\ref{LEM:sharp}, we have 
\begin{align*}
I(\cj{N}) & \les N_{4}^{2s-2\al-\s}N_{11}^{-3\s}(N_2 N_3)^{-\s} \big\| f_{N_{11},T}f_{N_4}\|_{L^2_{t,x}} \|f_{N_{12}}\|_{L^4_{t,x}} \|f_{N_{13}}\|_{L^4_{t,x}} \|f_{N_{2}}\|_{L^{\infty}_{t,x}}\|f_{N_3}\|_{L^{\infty}_{t,x}} \\
& \les N_{4}^{2s-2\al-\s+\frac{1-\al}{2}}N_{11}^{-3\s+\frac{1-\al}{2}} (N_2 N_3)^{-\s+\frac{1}{2}} \prod_{j=2}^{3}\|f_{N_j}\|_{X^{0,\frac{1}{2}+}} \|f_{N_4}\|_{X^{0,\frac{3}{8}}} \| f_{N_{12}}\|_{X^{0,\frac{3}{8}}}\\
 & \hphantom{XXXXXXXXXXXXXXXXXXXXXX} \times\| f_{N_{13}}\|_{X^{0,\frac{3}{8}}} \|f_{N_{11},T}\|_{X^{0,\frac{3}{8}}} \\
 & \les N_{4}^{2s-2\al-3\s+\frac{1-\al}{2}+1}N_{11}^{-3\s+\frac{1-\al}{2}} \prod_{j=2}^{4}\|\P_{N_j}w\|_{X^{\s,\frac{1}{2}+}} \prod_{\l=1}^{3}\|\P_{N_{1 \l }} w\|_{X^{\s,\frac{1}{2}+}}.
\end{align*}
To sum over the dyadic scales, we enforce $-3\s+\frac{1-\al}{2} \leq -3\eps<0$, which requires $s\geq \frac{4-\al}{6}+2\eps$. Then, we can write 
\begin{align*}
N_{4}^{2s-2\al-3\s+\frac{1-\al}{2}+1}N_{11}^{-3\s+\frac{1-\al}{2}} \les N_{11}^{-3\eps}N_{4}^{2s-3\al-6\s+2+3\eps} \les N_{11}^{-3\eps},
\end{align*}
provided that $2s-3\al-6\s+2+3\eps\leq 0$. This conditions requires $s> \frac{5-3\al}{4}+\frac{3}{2}\eps$. Hence, in this case, we need 
\begin{align}
s> \max\bigg(  \frac{4-\al}{6}+2\eps, \frac{5-3\al}{4}+\frac{3}{2}\eps\bigg)=\frac{5-3\al}{4}+\frac{3}{2}\eps, 
\label{salcond1}
\end{align} 
for $0<\eps\ll 1$. Thus, we may sum over the dyadic scales and show that this contribution can be bounded by the right hand side of \eqref{Remainderest2}.

\medskip
\noi
$\bullet$ \underline{ \textbf{Case 1.2:} $|n_{11}| \gg |n_{13}|   \gg n_{\text{max}}$} 

\medskip
\noi

We proceed similar to Case 1.1, with  
\begin{align*}
I(\cj{N}) & \les N_{4}^{2s-2\al-\s} N_{13}^{-\s} N_{11}^{-2\s}  (N_2 N_{3})^{-\s}\big\| f_{N_{11},T}f_{N_4}\|_{L^2_{t,x}} \|f_{N_{12}}\|_{L^4_{t,x}} \|f_{N_{13}}\|_{L^4_{t,x}} \|f_{N_{2}}\|_{L^{\infty}_{t,x}}\|f_{N_3}\|_{L^{\infty}_{t,x}} \\
&\les N_{4}^{2s-2\al-\s +\frac{1-\al}{2} }N_{13}^{-\s+\frac{1-\al}{4}} N_{11}^{-2\s+\frac{1-\al}{4}} (N_2 N_3)^{-\s+\frac{1}{2}}  \prod_{j=2}^{4}\|\P_{N_j}w\|_{X^{\s,\frac{1}{2}+}} \prod_{\l=1}^{3}\|\P_{N_{1 \l }} w\|_{X^{\s,\frac{1}{2}+}} \\
& \les N_{4}^{2s-3\al -6\s +2+} N_{11}^{-3\eps} \prod_{j=2}^{4}\|\P_{N_j}w\|_{X^{\s,\frac{1}{2}+}} \prod_{\l=1}^{3}\|\P_{N_{1 \l }} w\|_{X^{\s,\frac{1}{2}+}} \\
& \les N_{11}^{-3\eps} \prod_{j=2}^{4}\|\P_{N_j}w\|_{X^{\s,\frac{1}{2}+}} \prod_{\l=1}^{3}\|\P_{N_{1 \l }} w\|_{X^{\s,\frac{1}{2}+}}
\end{align*}
provided $s$, $\al$ and $\eps$ satisfy \eqref{salcond1}.

\medskip
\noi
$\bullet$ \underline{ \textbf{Case 1.3:} $ |n_{13}|   \ll n_{\text{max}}$} 

\medskip
\noi

We may assume $|n_{\text{max}}|\sim |n_4|$. Then,
in this case, we proceed in a similar way as in Case 1.2 above.

\medskip
\noi
$\bullet$ \underline{ \textbf{Case 2:} $ n_{\text{max}}^{(1)}  \sim n_{\text{max}}$} 

\medskip
\noi

\medskip
\noi
$\bullet$ \underline{ \textbf{Case 2.1:} $  n_{\text{max}}  \gg |n_1|$} 

\medskip
\noi

In this case, $|n_\text{max}|\sim |n_4|\sim |n_2|$. Hence, we can proceed as in Case 1.1.

\medskip
\noi
$\bullet$ \underline{\textbf{Case 2.2:} $ |n_{\text{max}}| \sim |n_1|$}

\medskip
\noi
We can have $|n_1|\sim |n_3|$ or $|n_1|\sim |n_4|$. We assume $|n_1|\sim |n_3|$. This leads to two natural subcases. 

\medskip
\noi
$\bullet$ \underline { \textbf{Subcase 2.2.1:} $|n_{11}| \gg   |n_{12}| $}

\medskip
\noi

In this case, we have $|n_{11}|\sim |n_{3}|$. We assume that 
$|n_4|=\max( |n_{12}|,|n_{13}|, |n_2|,|n_4|)$. If $|n_4|\sim n_{\text{max}}$, we may proceed as in Case 1.1. Otherwise, if $|n_4| \ll n_{\text{max}}$, we need to apply the bilinear estimate \eqref{bilinstrich1} twice. Indeed, suppose that $|n_2|=\max( |n_{12}|,|n_{13}|,|n_2|)$. Then, by H\"{o}lder's inequality, \eqref{bilinstrich1}, \eqref{Linfty} and Lemma~\ref{LEM:sharp}, we have  
\begin{align*}
I(\cj{N})& \les N_{3}^{2s-2\al-2\s} N_{4}^{-\s}(N_{12}N_{13}N_{2})^{-\s} \|f_{N_{11},T}f_{N_4 }\|_{L^2_{t,x}}\|f_{N_3}f_{N_2}\|_{L^2_{t,x}} \|f_{N_{12}}\|_{L^{\infty}_{t,x}}\|f_{N_{13}}\|_{L^{\infty}_{t,x}}\\
& \les N_{3}^{2s-2\al-2\s} (N_{4}N_{2})^{-\s+\frac{1-\al}{4}} (N_{12}N_{13})^{-\s+\frac{1}{2}} \prod_{j=2}^{4}\|\P_{N_j}w\|_{X^{\s,\frac{1}{2}+}} \prod_{\l=1}^{3}\|\P_{N_{1 \l }} w\|_{X^{\s,\frac{1}{2}+}} \\
& \les N_{3}^{-2\eps}N_{4}^{2s-2\al-6\s+\frac{1-\al}{2}+1} \prod_{j=2}^{4}\|\P_{N_j}w\|_{X^{\s,\frac{1}{2}+}} \prod_{\l=1}^{3}\|\P_{N_{1 \l }} w\|_{X^{\s,\frac{1}{2}+}} \\
& \les N_{3}^{-2\eps} \prod_{j=2}^{4}\|\P_{N_j}w\|_{X^{\s,\frac{1}{2}+}} \prod_{\l=1}^{3}\|\P_{N_{1 \l }} w\|_{X^{\s,\frac{1}{2}+}},
\end{align*} 
where in the third inequality we need $\al\geq \frac{1}{2}+2\eps$ and in the final inequality, we need 
\begin{align*}
s>\frac{9-5\al}{8}+\frac{3}{2}\eps.
\end{align*}

\noi
$\bullet$ \underline { \textbf{Subcase 2.2.2:} $|n_{11}|\sim  |n_{12}| \sim |n_1| $}

\medskip
\noi

In this case, we have $|n_{11}|\sim |n_{12}|\sim |n_3|$, so we can proceed as in Case 1.1.

\medskip
\noi
$\bullet$ \underline { \textbf{Case 3:} $n_{\text{max}}^{(1)}\ll n_{\text{max}}$}

Since $|n_1|\les n_{\text{max}}^{(1)}$, we have $n_{\text{max}}\sim |n|\sim |n_2|$. If $|n_3| \sim n_{\text{max}}$, we can proceed as in Case 1.1. Otherwise, if $|n_3|\ll n_{\text{max}}$, we may proceed as in Subcase 2.2.1.

Compiling these cases, overall we require $\frac{1}{2}<\al<1$ and $s>\frac{9-5\al}{8}+\frac{3}{2}\eps$. This completes the proof of \eqref{Remainderest2}.

\medskip
\noi

We now estimate the boundary term $\N_{0}$ and thus establish \eqref{bdry1}. We fix $t\in [0,T(R)]$. This leads to the following cases.

\medskip
\noi
$\bullet$ \underline{ \textbf{Case 1:} $|n_{(1)}|\sim |n_{(4)}|$} 

\medskip
\noi
From Lemma~\ref{LEM:mult}, we have 
\begin{align*}
	\frac{ | \Psi_{s}(\cj{n})|}{|\Phi_{\al}(\cj{n})|} \les |n_{(1)}|^{2s-2\al}. 
	\end{align*}
We define $f(t,n)=\jb{n}^{\s}  | \ft u (t, n)|$. Then, by Cauchy-Schwarz, we have 
\begin{align*}
|\N_0(u)(t)|& \les \sum_{n_4\in\Z}\sum_{  \substack{n_4=n_1-n_2+n_3 \\ n_1,n_3\neq n_4}  } \frac{1}{\jb{n_{(1)}}^{\nu}} f(n_1)f(n_2)f(n_3)f(n_4) \\
& \les \bigg(\sum_{n_4\in\Z}\sum_{  n_4=n_1-n_2+n_3 }  \frac{f(n_1)^{2} f(n_4)^2 }{\jb{n_3}^{\nu}} \bigg)^{\frac{1}{2}}\bigg(\sum_{n_4\in\Z}\sum_{  n_4=n_1-n_2+n_3 }  \frac{f(n_2)^{2} f(n_3)^{2}}{\jb{n_1}^{\nu}} \bigg)^{\frac{1}{2}} \\
&\les \|u(t)\|_{H^{\s}}^{4},
\end{align*}
where $\nu:=4\s+2\al-2s$ and we can perform the summations provided $s> \frac{3-2\al}{2}+\eps$.

\medskip
\noi
$\bullet$ \underline{ \textbf{Case 2:} $|n_{(1)}|\sim |n_{(3)}|\gg  |n_{(4)}|$} 

\medskip
\noi

We assume, without loss of generality, that $|n_1|\sim |n_2|\sim|n_3|\gg |n_4|$. Then, we have 

\begin{align*}
|\N_0(u)(t)|
& \les \bigg(\sum_{n_4\in\Z}\sum_{  n_4=n_1-n_2+n_3 }  f(n_1)^2 f(n_2)^{2} f(n_4)^2 \bigg)^{\frac{1}{2}}\bigg(\sum_{n_4\in\Z}\sum_{ \substack{ n_4=n_1-n_2+n_3 \\ |n_1|\ges |n_4|} }   \frac{f(n_3)^2}{\jb{n_1}^{2\nu} \jb{n_4}^{2\s}}   \bigg)^{\frac{1}{2}} \\
& \les \bigg( \sum_{ \substack{ n_1,n_4 \\|n_1|\ges |n_4| }} \frac{1}{\jb{n_1}^{1+\eps} \jb{n_4}^{1+\eps}}  \frac{1}{\jb{n_1}^{2\nu +2\s-2-2\eps}  }      \bigg)^{\frac{1}{2}} \|u(t)\|_{H^{\s}}^4  
\les \|u(t)\|_{H^{\s}}^{4}, 
\end{align*}
where $\nu:=3\s+2\al-2s$ and we can sum provided $2\nu +2\s-2-2\eps\geq 0$, which requires $s \ge \frac{3-2\al}{2}+5\eps$. We have also used here the condition $-2\s+1+\eps \ge 0 $, which requires $s \le 1+ \frac {3\eps}{2}$ but this condition is satisfied since we have only considered the case $s \le 1$.

\medskip
\noi
$\bullet$ \underline{ \textbf{Case 3:} $|n_{(1)}|\sim|n_{(2)}| \gg |n_{(3)}|\geq   |n_{(4)}|$} 

\medskip
\noi

With $\nu:=2\s+2\al-2s>0$, we have
\begin{align*}
\frac{1}{\jb{ n_{(1)}}^{\nu} \jb{n_{(3)}}^{\s} \jb{n_{(4)}}^{\s}  } \les \frac{1}{ \jb{n_{(3)}}^{\s+\frac{\nu}{2}} \jb{n_{(4)}}^{\s+\frac{\nu}{2}}  }.
\end{align*}
Hence, by Cauchy-Schwarz, 
\begin{align*}
|\N_0(u)(t)| & \les \bigg( \sum_{ n_{(3)}, n_{(4)} }   \frac{1}{ \jb{n_{(3)}}^{2\s+\nu} \jb{n_{(4)}}^{2\s+\nu  
}}   \bigg)^{\frac{1}{2}}  \|u(t)\|_{H^{\s}}^{4},
\end{align*}
where we can sum provided that $2\s+\nu>1$. This is satisfied if $s>\frac{3-2\al}{2}+2\eps$.

\begin{remark}\rm 
Combining the regularity restrictions required in estimating both the boundary and remainder piece above, we see that we need 
\begin{align*}
s>\max\bigg( \frac{9-5\al}{8}, \frac{3-2\al}{2}\bigg)=\frac{3-2\al}{2},
\end{align*}
since $\frac{1}{2}<\al<1$, which gives rise to the restriction in Lemma~\ref{LEM:energy2}. We point out that, contrary to the higher dispersion case (Lemma~\ref{LEM:energy}), the worst regularity restriction comes from the boundary estimate \eqref{bdry1}. This is an artefact of the weaker dispersion since the lower bound on the phase function in Lemma~\ref{lemma: phase lower bound} is much less effective and we can also no longer use space-time estimates like the $L^4_{t,x}$-Strichartz estimate \eqref{L4strich}.
\end{remark}

\begin{remark}\rm  \label{RMK:DTfrac}
Our reduction to the energy estimate in Lemma~\ref{LEM:energy2} is essential for studying the weakly dispersive FNLS~\eqref{FNLS}. If instead we tried to obtain an energy estimate as in Lemma~\ref{LEM:energy}, following \cite{DT2020}, we would need to place two functions into the $L^{\infty}_{t}L^{2}_{x}$-norm in order to be controlled using the $L_{x}^2$-conservation. To illustrate why this does not yield results for all $\frac{1}{2}<\al<1$, we consider the Subcase 2.2.1 in the above proof of Lemma~\ref{LEM:energy2}. Using the assumptions and notations from there along with Sobolev embedding and the $L^{4}_{t,x}$-Strichartz estimate \eqref{L4strich}, we have
\begin{align*}
I(\cj{N})& \les N_{3}^{2s-2\al}N_{3}^{-2\s}N_{4}^{-\s}N_{2}^{-\s} \|f_{N_3}\|_{L^{4}_{t}L^{\infty}_{x}}\|f_{N_{11}}\|_{L^{4}_{t}L^{\infty}_{x}}\|f_{N_{4}}\|_{L^{4}_{t}L^{\infty}_{x}}\|f_{N_{2}}\|_{L^{4}_{t}L^{\infty}_{x}} \\
& \hphantom{XXXXXXXXXXXXXXXXXXXX} \times \|u_{N_{12}}\|_{L^{\infty}_{t}L^{2}_{x}}\|u_{N_{13}}\|_{L^{\infty}_{t}L^{2}_{x}} \\
& \les N_{3}^{2s-2\al-2\s+\frac{1}{2}+\frac{1-\al}{2}+}N_{4}^{-2\s+\frac{1}{2}+\frac{1-\al}{2}+} \|f_{N_3}\|_{X^{0,\frac{3}{8}}}\|f_{N_{11}}\|_{X^{0,\frac{3}{8}}}\|f_{N_{4}}\|_{X^{0,\frac{3}{8}}}\|f_{N_{2}}\|_{X^{0,\frac{3}{8}}} \\
& \hphantom{XXXXXXXXXXXXXXXXXXXX} \times \|u_{N_{12}}\|_{L^{\infty}_{t}L^{2}_{x}}\|u_{N_{13}}\|_{L^{\infty}_{t}L^{2}_{x}}.
\end{align*}
In order to sum over the dyadic scales, we need 
\begin{align*}
2s-2\al-2\s+\frac{1}{2}+\frac{1-\al}{2}<0 \quad \text{and} \quad 2s-4\s-3\al+2<0.
\end{align*}
The first condition requires $\al>\frac{4}{5}$ and the second imposes $s>\frac{4-3\al}{2}$. Thus, such an approach does not seem to cover the full range $\frac{1}{2}<\al<1$.
\end{remark}

\subsection{Proof of Proposition~\ref{COR:stability}}

In this subsection, we prove Proposition~\ref{COR:stability}. The argument is the same as that of Corollary 1.3 in~\cite{ST1} and diverges only when we prove the stability in $L^{p}_{\text{loc}}(d\mu_s)$. 
We will include details for the benefit of the reader. 
We define the measures
\begin{align*}
d\nu_{j}(\phi) = g_{j}(\phi)d\mu_s(\phi), \quad j=1,2.
\end{align*}
Then, for any test function $\vp$, we have 
\begin{align*}
\int_{L^2} \vp(\phi)d(\Phi_{t})_{\ast}\nu_j (\phi)&= \int_{L^2} \vp(\Phi_{t}(\phi))d\nu_j (\phi)\\
&=  \int_{L^2} \vp(\Phi_{t}(\phi))g_{j}(\phi)d\mu_s (\phi) \\
& = \int_{L^2} \vp(\phi)g_{j}(\Phi_{-t}(\phi)) f(t,\phi)d\mu_s (\phi).
\end{align*}

\noi
Therefore, $d(\Phi_t)_{\ast}\nu_j(\phi)=G_{j}(t,\phi)d\mu_s (\phi)$, $j=1,2$, where $G_{j}(t,\phi)=g_{j}(\Phi_{-t}(\phi)) f(t,\phi)$. Now, we have 
\begin{align*}
\int_{L^2} |G_1(t,\phi)-G_2(t,\phi)|d\mu_s (\phi)& = \int_{L^2} |g_{1}(\Phi_{-t}(\phi)) -g_{2}(\Phi_{-t}(\phi)) | f(t,\phi)d\mu_s (\phi) \\
& = \int_{L^2} |g_1 (\phi)-g_2(\phi)|d\mu_s (\phi).
\end{align*}
Now, suppose $p>1$ and $g_1,g_2\in L^1(d\mu_s)\cap L^p_{\textup{loc}}(d\mu_s)$ and fix $R>0$. Then, by $L^2$-conservation, we have $\Phi_{-t}(B_R)=B_R$, and hence
\begin{align*}
&\int_{B_R}  |G_1(t,\phi)-G_2(t,\phi)|^p  d\mu_s (\phi)\\
& = \int_{B_R} |g_{1}(\Phi_{-t}(\phi)) -g_{2}(\Phi_{-t}(\phi)) |^p f(t,\phi)^{p}d\mu_s (\phi) \\
& \leq \| \chi_{B_R} f(t,\cdot)\|_{L^{\infty}(d\mu_s)}^{p-1} \int_{B_R} |g_{1}(\Phi_{-t}(\phi)) -g_{2}(\Phi_{-t}(\phi)) |^p f(t,\phi)d\mu_s (\phi) \\
& \leq \| \chi_{B_R} f(t,\cdot)\|_{L^{\infty}(d\mu_s)}^{p-1} \int_{\Phi_{-t}(B_R)} |g_1 (\phi)-g_2(\phi)|^p d\mu_s (\phi) \\
& = \| \chi_{B_R} f(t,\cdot)\|_{L^{\infty}(d\mu_s)}^{p-1} \int_{B_{R}} |g_1 (\phi)-g_2(\phi)|^p d\mu_s (\phi).
\end{align*}
This shows \eqref{Lpstab}.

\begin{ackno}\rm
The authors would like to kindly thank Tadahiro Oh and Nikolay Tzvetkov for suggesting the problem, for their continued support and for informing us that the $L^2(d\mu_s)$-integrability assumption in~\cite[Corollary 1.4]{OTz2} can be weakened to $L^1(d\mu_s)$-integrability. The authors are also grateful to Nikolay Tzvetkov for suggesting the application of Proposition~\ref{PROP:Linfty} to the $L^p$-stability result in Proposition~\ref{COR:stability}. K.S.~would like to express his gratitude to the School of Mathematics at the University of Edinburgh for its 
hospitality during his visit, 
where this manuscript was prepared. 
The authors also wish to thank the anonymous referees for their helpful comments. 

J.\,F.~was supported by The Maxwell Institute Graduate School in Analysis and its
Applications, a Centre for Doctoral Training funded by the UK Engineering and Physical
Sciences Research Council (grant EP/L016508/01), the Scottish Funding Council, Heriot-Watt
University and the University of Edinburgh and Tadahiro Oh's ERC starting grant no. 637995 “ProbDynDispEq”. K.S.~was partially supported by National Research Foundation of
Korea (grant NRF-2019R1A5A1028324).

\end{ackno}


\begin{thebibliography}{99}		


\bibitem{BG}
N.~Barashkov, M.~Gubinelli
{\it  A variational method for $\Phi^4_3$}, 
Duke Math. J. 169 (2020), no. 17, 3339--3415.



\bibitem{BD}
M.~Bou\'e, P.~Dupuis,
{\it A variational representation for certain functionals of Brownian motion},
Ann. Probab. 26 (1998), no. 4, 1641--1659.

\bibitem{Bou}
J. Bourgain,
{\it Fourier transform restriction phenomena for certain lattice subsets and applications to
nonlinear evolution equations I: Schr\"odinger equations}, 
Geom. Funct. Anal. 3 (1993), 107--15.

\bibitem{Bo94}
J.~Bourgain, 
{\it Periodic nonlinear Schr\"{o}dinger equation and invariant measures},
Comm. Math. Phys. 166 (1994), no. 1, 1--26. 

\bibitem{BGT}
N.~Burq, P.~Gérard, N.~Tzvetkov,
{\it An instability property of the nonlinear Schr\"odinger equation on $S^{d}$},
Math. Res. Lett. 9 (2002), 323--335.

\bibitem{BTh}
N.~Burq, L.~Thomann,
{\it Almost sure scattering for the one dimensional nonlinear Schr\"{o}dinger equation},
arXiv:2012.13571 [math.AP].

\bibitem{FRAC2}
D.~Cai, A.~Majda, D.~McLaughlin, E.~Tabak, {\it Dispersive wave turbulence in one dimension},
Phys. D 152/153 (2001), 551--572.

\bibitem{Cameron}
R.~Cameron, W.~Martin, 
{\it Transformations of Wiener integrals under translations},
Ann. of Math. (2) 45, (1944). 386--396.

\bibitem{Cho}
Y.~Cho, G.~ Hwang, S.~Kwon, S.~Lee,
{\it Well-posedness and ill-posedness for the cubic fractional Schr\"odinger equations},
Discrete Contin. Dyn. Syst. 35  (2015), no. 7 pg. 2863--2880.

\bibitem{ChofPoc}
A.~Choffrut, O.~Pocovnicu,
{\it Ill-posedness of the cubic nonlinear half-wave equation and other fractional NLS on the real line},
Int. Math. Res. Not. IMRN (2018), no. 3, 699--738.


\bibitem{CKSTT}
J.~Colliander, M.~Keel, G.~Staffilani, H.~Takaoka, T.~Tao, 
{\it A refined global well-posedness result for Schrödinger equations with derivative,} 
SIAM J. Math. Anal. 34 (2002), no. 1, 64--86. 


\bibitem{Cru1}
A.B.~Cruzeiro, {\it \'Equations diff\'erentielles ordinaires: non explosion et mesures quasi-invariantes,}
(French) 
J. Funct. Anal. 54 (1983), no. 2, 193--205.


\bibitem{Cru2}
A.B.~Cruzeiro, {\it \'Equations diff\'erentielles sur l'espace de Wiener et formules de Cameron-Martin non-lin\'eaires}, 
(French) 
J. Funct. Anal. 54 (1983), no. 2, 206--227. 


\bibitem{DBD}
A.~de Bouard, A.~Debussche, 
{\it The Korteweg-de Vries equation with multiplicative homogeneous white
noise}, Stochastic differential equations: theory and applications,113--133, Interdiscip. Math. Sci., 2, World
Sci. Publ., Hackensack, NJ, 2007.

\bibitem{DT2020} 
A.~Debussche, Y.~Tsutsumi,
{\it  Quasi-invariance of Gaussian measures transported by the
	cubic NLS with third-order dispersion on $\mathbf T$}, 
J. Funct. Anal. 281 (2021), no. 3, 109032, 23 pp.

\bibitem{Demirbas}
S.~Demirbas, {\it Almost sure global well-posedness for the fractional cubic Schr\"{o}dinger equation on the torus}, Canad. Math. Bull., 58 (2015), pp. 471--485.

\bibitem{demirbas2013existence}
S.~Demirbas, M.~B.~Erdogan, N.~Tzirakis,
 {\it Existence and Uniqueness theory for the fractional Schr{\"o}dinger equation on the torus,}
  Some topics in harmonic analysis and applications, Advanced Lectures in Mathematics (ALM) 34 (2016), Int. Press, Somerville, MA, no. 23, 1241--1252.


\bibitem{DH}
Y.~Deng, Z.~Hani,
{\it  On the derivation of the wave kinetic equation for NLS},
Forum Math. Pi 9 (2021), Paper No. e6, 37 pp.



\bibitem{FRAC4}
A.~Elgart, B.~Schlein, {\it Mean field dynamics of boson stars}, Comm. Pure Appl. Math. 60
(2007), no. 4, 500--545.



\bibitem{FT}
 J.~Forlano, W.~J.~Trenberth, 
 {\it On the transport of Gaussian measures under the one-dimensional fractional nonlinear Schrödinger equations,} 
	Ann. Inst. H. Poincar\'e Anal. Non Lin\'eaire 36 (2019), no. 7, 1987--2025.
	
	
	\bibitem{FRAC5}
J.~Fr\"ohlich, E.~Lenzmann, {\it Blowup for nonlinear wave equations describing boson stars},
Comm. Pure Appl. Math. 60 (2007), no. 11, 1691--1705.


\bibitem{GLT}
G.~Genovese, R.~Lucà, N.~Tzvetkov,
{\it Quasi-invariance of low regularity Gaussian measures under the gauge map of the periodic derivative NLS},
J. Funct. Anal. 282 (2022), no. 1, Paper No. 109263, 45 pp.

\bibitem{GROSS} L.~Gross, {\it Abstract Wiener spaces,}
Proc. 5th Berkeley Sym. Math. Stat. Prob. 2 (1965), 31--42.

\bibitem{GOTW}
T.~Gunaratnam, T.~Oh, N.~Tzvetkov, H.~Weber,
{\it  Quasi-invariant Gaussian measures for the nonlinear wave equation in three dimensions},
to appear in Probab. Math. Phys.



\bibitem{GO}
Z.~Guo, T.~Oh, 
{\it  Non-existence of solutions for the periodic cubic nonlinear Schr\"odinger equation below $L^2$}, Internat. Math. Res. Not. 2018, no.6, 1656--1729. 

\bibitem{HPT}
Z.~Hani, B.~Pausader, N.~Tzvetkov, N.~Visciglia,
{\it Modified scattering for the cubic Schrödinger equation on product spaces and applications,}
Forum Math. Pi 3 (2015), e4, 63 pp.


\bibitem{4NLS1}
B.~A.~Ivanov and A.~M.~Kosevich, {\it Stable three-dimensional small-amplitude soliton in
magnetic materials}, So. J. Low Temp. Phys. 9 (1983), 439--442.



\bibitem{FRAC1}
K.~Kirkpatrick, E.~Lenzmann, G.~Staffilani, {\it On the continuum limit for discrete NLS with
long-range interactions}, Comm. Math. Physics 317 (2013), 563--591.

\bibitem{Kishimoto}
N.~Kishimoto,
{\it A remark on norm inflation for nonlinear Schrödinger equations},
Commun. Pure Appl. Anal. 18 (2019) 1375--1402.

\bibitem{Kuo2}
H.~Kuo, {\it Gaussian measures in Banach spaces,} Lecture Notes in Mathematics, Vol. 463. Springer-Verlag, Berlin-New York, 1975. vi+224 pp.

\bibitem{Laskin}
N.~Laskin, 
{\it Fractional quantum mechanics and L\'{e}vy path integrals},
 Phys. Lett. 268 (2000) 298–305.
 
 
\bibitem{FRAC3}
A.~Majda, A.~McLaughlin, E.~Tabak, {\it A one-dimensional model for dispersive wave turbulence},
J. Nonlinear Sci. 7 (1997), no. 1, 9--44.

\bibitem{Molinet}
L.~Molinet,
{\it On ill-posedness for the one-dimensional periodic cubic Schr\"{o}dinger equation}, Math. Res. Lett. 16 (2009), no. 1, 111--120.


\bibitem{Naz}
S.~Nazarenko,
{\it Wave Turbulence},
 Lecture Notes in Phys. 825, Springer, Heidelberg (2011).


\bibitem{Oh}
T.~Oh,
{\it A remark on norm inflation with general initial data for the cubic nonlinear Schrödinger equations in negative Sobolev spaces,}
Funkcial. Ekvac. 60 (2017) 259--277.


\bibitem{OOT}
T.~Oh, M.~Okamoto, L.~Tolomeo, 
{\it Focusing $\Phi^4_3$-model with a Hartree-type nonlinearity},
arXiv:2009.03251 [math.PR].



\bibitem{ORSW2}
T.~Oh, T.~Robert, P.~Sosoe, Y.~Wang,
{\it Invariant Gibbs dynamics for the dynamical sine-Gordon model}, Proc. Roy. Soc. Edinburgh Sect. A (2020), 17 pages. doi: https://doi.org/10.1017/prm.2020.68 



\bibitem{OS}
T.~Oh,  K.~Seong, 
{\it  Quasi-invariant Gaussian measures for the cubic fourth order 
	nonlinear Schr\"odinger equation 
	in negative Sobolev spaces,}
J. Funct. Anal. 281 (2021), no. 9, 109150, 49 pp.
 

\bibitem{OST20}
T.~Oh,  K.~Seong, L.~Tolomeo,
{\it   A remark on Gibbs measures with log-correlated Gaussian fields,}
arXiv:2012.06729 [math.PR]. 


\bibitem{OST}
T.~Oh, P.~Sosoe, N.~Tzvetkov,
{\it  An optimal regularity result on the quasi-invariant Gaussian measures for the cubic fourth order nonlinear 
	Schr\"odinger equation,} 
J. \'Ec. polytech. Math. 5
(2018), 793--841. 


\bibitem{OTT}
T.~Oh, Y.~Tsutsumi, N.~Tzvetkov,
{\it  Quasi-invariant Gaussian measures for the cubic nonlinear Schr\"odinger equation with third order dispersion}, C. R. Math. Acad. Sci. Paris 357 (2019), no. 4, 366--381. 




\bibitem{OTz}
T.~Oh, N.~Tzvetkov,
{\it Quasi-invariant Gaussian measures for the cubic fourth order nonlinear Schr\"odinger equation}, 
Probab. Theory Related Fields 169 (2017), 1121--1168. 

\bibitem{OTintro}
 T.~Oh, N.~Tzvetkov, 
 {\it On the transport of Gaussian measures under the flow of Hamiltonian PDEs },
Séminaire Laurent Schwartz-Équations aux dérivées partielles et applications. Année 2015–2016, Exp. No. VI, 9 pp., Ed. Éc. Polytech., Palaiseau, 2017. 


\bibitem{OTz2}
T.~Oh, N.~Tzvetkov,
{\it Quasi-invariant Gaussian measures for the two-dimensional defocusing cubic nonlinear wave
	equation}, 
J. Eur. Math. Soc. 22 (2020), no. 6, 1785--1826.

\bibitem{OTzW}
T.~Oh, N.~Tzvetkov, Y.~Wang,
{\it Solving the 4NLS with white noise initial data}, 
Forum Math. Sigma. 8 (2020), e48, 63 pp.



\bibitem{OW1}
T.~Oh, Y.~Wang, {\it On the ill-posedness of the cubic nonlinear Schr\"odinger equation on the circle}, An. Ştiinţ. Univ. Al. I. Cuza Iaşi. Mat. (N.S.) 64 (2018), no. 1, 53--84.


\bibitem{OW}
T.~Oh, Y.~Wang,
{\it  Global well-posedness of the periodic cubic fourth order NLS in negative Sobolev spaces},
Forum Math. Sigma 6 (2018), e5, 80 pp. 





\bibitem{PTV} 
F.~Planchon, N.~Tzvetkov, N.~Visciglia, 
{\it  Transport of gaussian measures by the 
	flow of the nonlinear Schr\"odinger equation}, 
Math. Ann. 378 (2020), no. 1-2, 389--423.


\bibitem{Ramer}
R.~Ramer, {\it On nonlinear transformations of Gaussian measures}, J. Funct. Anal. 15 (1974), 166--187.










\bibitem{STX}
P.~Sosoe, W.~Trenberth, T.~Xian,
{\it Quasi-invariance of fractional Gaussian fields by the nonlinear wave equation with polynomial nonlinearity}, 
Differential Integral Equations 33 (2020), no. 7-8, 393--430.



\bibitem{Sulem}
 C.~Sulem, P.~L.~Sulem, {\it The nonlinear Schr\"odinger equations: Self-focusing and wave collapse}, Applied
Mathematical Sciences, 139. Springer-Verlag, New York (1999) 350 pp.

\bibitem{ST1}
C.~Sun, N.~Tzvetkov,
{\it Gibbs measure dynamics for the fractional nonlinear Schrödinger equation,}
SIAM J. Math. Anal. 52 (2020), no. 5, 4638--4704.


\bibitem{ST2}
C.~Sun, N.~Tzvetkov,
{\it Refined probabilistic global well-posedness for the weakly dispersive NLS},
Nonlinear Anal. 213 (2021), Paper No. 112530, 91 pp.

\bibitem{TAO}
T.~Tao, {\it Nonlinear dispersive equations. Local and global analysis,} CBMS Regional Conference Series in Mathematics, 106. Published for the Conference Board of the Mathematical Sciences, Washington, DC; by the American Mathematical Society, Providence, RI, 2006. xvi+373 pp.


\bibitem{Thirouin}
J.~Thirouin,
{\it On the growth of Sobolev norms of solutions of the fractional defocusing NLS equation on the circle},
Ann. Inst. H. Poincaré Anal. Non Linéaire 34 (2017), no. 2, 509--531.



\bibitem{TW}
L.~Tolomeo, H.~Weber, 
{\it A simple construction of invariant measures for 1-$d$ focusing NLS}, in preparation.

\bibitem{4NLS2}
S.~K.~Turitsyn, {\it Three-dimensional dispersion of nonlinearity and stability of
multidimensional solitons}, Teoret. Mat. Fiz. 64 (1985), 226--232 (in Russian).


\bibitem{Tz}
N.~Tzvetkov, {\it Quasi-invariant Gaussian measures for one dimensional Hamiltonian PDE's,}
Forum Math. Sigma 3 (2015), e28, 35 pp. 




\bibitem{Ust}
A.~ \"Ust\"unel,
{\it Variational calculation of Laplace transforms via entropy on Wiener space and applications},
J. Funct. Anal. 267 (2014), no. 8, 3058--3083.

\bibitem{Zak1}
V.E.~Zakharov, V.S.~L’vov, G.~Falkovich, 
{\it Kolmogorov Spectra of Turbulence: I WaveTurbulence},
 Springer,
Berlin, 1992.

\bibitem{Zhid}
P.~Zhidkov,  
{\it On an infinite sequence of invariant measures for the cubic nonlinear Schrödinger equation},
Int. J. Math. Math. Sci. 28 (2001), no. 7, 375--394. 








\end{thebibliography}
\end{document}